\def\RR {\mathbb{R}}
\def\R2{{\mathbb R}^2}
\def\ls{\lesssim}
\def\gs{\gtrsim}
\def\Xint#1{\mathchoice
{\XXint\displaystyle\textstyle{#1}}%
{\XXint\textstyle\scriptstyle{#1}}%
{\XXint\scriptstyle\scriptscriptstyle{#1}}%
{\XXint\scriptscriptstyle\scriptscriptstyle{#1}}%
\!\int}
\def\XXint#1#2#3{{\setbox0=\hbox{$#1{#2#3}{\int}$ }
\vcenter{\hbox{$#2#3$ }}\kern-.6\wd0}}
\def\dashint{\Xint-}
\def\({\left(}
\def \){ \right)}
\newtheorem{theorem}{Theorem}[section]
\newtheorem{lemma}[theorem]{Lemma}
\newtheorem{corollary}[theorem]{Corollary}
\newtheorem{proposition}[theorem]{Proposition}
\newtheorem{example}[theorem]{Example}
\theoremstyle{definition}
\newtheorem{remark}[theorem]{Remark}
\renewcommand{\appendix}{\par
   \setcounter{section}{0}%
   \setcounter{subsection}{0}%
   \setcounter{subsubsection}{0}%
   \gdef\thesection{\@Alph\c@section}%
   \gdef\thesubsection{\@Alph\c@section.\@arabic\c@subsection}%
   \gdef\theHsection{\@Alph\c@section.}%
   \gdef\theHsubsection{\@Alph\c@section.\@arabic\c@subsection}%
   \csname appendixmore\endcsname
 }
\numberwithin{equation}{section}
\begin{document}

\arraycolsep=1pt

\title{\bf\Large $L^p(\mathbb{R}^2)$-boundedness of Hilbert Transforms and Maximal Functions along Plane Curves with Two-variable Coefficients
\footnotetext{\hspace{-0.35cm} 2010 {\it
Mathematics Subject Classification}. Primary 42B20;
Secondary 42B25.
\endgraf {\it Key words and phrases.} Hilbert transform, maximal function, Littlewood-Paley operator, local smoothing estimate, variable curve.
\endgraf Liang Song is supported by NSF for distinguished Young Scholar of Guangdong Province (No.~2016A030306040);  Haixia Yu is supported by the Fundamental Research Funds for the Central University (No.~20lgpy144).}}
\author{Naijia Liu, Liang Song and Haixia Yu\footnote{Corresponding author.}}

\date{}

\maketitle

\vspace{-0.7cm}

\begin{abstract}
 In this paper, for general plane curves $\gamma$ satisfying some suitable smoothness and curvature conditions, we obtain the single annulus $L^p(\mathbb{R}^2)$-boundedness of the Hilbert transforms $H^\infty_{U,\gamma}$ along the variable plane curves $(t,U(x_1, x_2)\gamma(t))$
and the $L^p(\mathbb{R}^2)$-boundedness of the corresponding maximal functions $M^\infty_{U,\gamma}$, where $p>2$ and $U$ is a measurable function.   The range on $p$  is sharp.   Furthermore, for $1<p\leq 2$, under the additional conditions that $U$ is Lipschitz and making a $\varepsilon_0$-truncation  with $\gamma(2 \varepsilon_0)\leq 1/4\|U\|_{\textrm{Lip}}$,  we also obtain   similar boundedness for these two operators $H^{\varepsilon_0}_{U,\gamma}$ and $M^{\varepsilon_0}_{U,\gamma}$.
\end{abstract}

\section{Introduction}

The main purpose of this article is to study the $L^p(\R2)$-boundedness of the Hilbert transform and  corresponding maximal function along variable plane curve. Let us first recall some backgrounds of this topic. The so-called Zygmund conjecture is a long-standing open problem, which can be stated as follows. Denote

$$M^{\varepsilon_0}_{U}f(x_1,x_2):=\sup_{0<\varepsilon<\varepsilon_0}\frac{1}{2\varepsilon}
\int_{-\varepsilon}^{\varepsilon}|f(x_1-t,x_2-U(x_1,x_2)t)|\,\textrm{d}t.$$

\textbf{Zygmund conjecture:} Let $U:\ \mathbb{R}^2\rightarrow  \mathbb{R}$ be a Lipschitz function and $\varepsilon_0>0$ small enough depending on $\|U\|_{\textrm{Lip}}$. Is the operator $M^{\varepsilon_0}_{U}$ bounded on $L^p(\mathbb{R}^2)$ for some $p\in(1,\infty)$?

Much later Stein \cite{St87} raised the singular integral variant of Zygmund conjecture. Denote

$$H^{\varepsilon_0}_{U}f(x_1,x_2):={\rm p.\,v.}\int_{-\varepsilon_0}^{\varepsilon_0}
f(x_1-t,x_2-U(x_1,x_2)t)\,\frac{\textrm{d}t}{t}.
$$

\textbf{Stein conjecture:} Let $U:\ \mathbb{R}^2\rightarrow  \mathbb{R}$ be a Lipschitz function and $\varepsilon_0>0$ small enough depending on $\|U\|_{\textrm{Lip}}$. Is the operator $H^{\varepsilon_0}_{U}$ bounded on $L^p(\mathbb{R}^2)$ for some $p\in(1,\infty)$?

A trivial fact is that $M^{\varepsilon_0}_{U}$ is bounded on $L^\infty(\mathbb{R}^2)$ and $H^{\varepsilon_0}_{U}$ is unbounded on $L^\infty(\mathbb{R}^2)$ if $U$ is measurable. Furthermore, a counterexample based on a construction of the Besicovitch-Kakeya set shows that we can not expect any $L^p(\mathbb{R}^2)$-boundednesss of $M^{\varepsilon_0}_{U}$ and $H^{\varepsilon_0}_{U}$ for $1<p<\infty$ if $U$ is only  assumed to be  H\"older continuous  $C^\alpha$  with $\alpha<1$.  Both Zygmund conjecture and Stein conjecture are very difficult conjectures. Indeed, it is known that if the Stein Conjecture holds for $C^2$ vector fields, then Carleson's Theorem on the pointwise convergence of Fourier series \cite{Ca} would follow.

Let us state some partial progresses toward understanding the above two open problems. For any real analytic function $U$, Bourgain \cite{Bour}  first  obtained the $L^2(\mathbb{R}^2)$-boundedness of $M^{\varepsilon_0}_{U}$, and the $L^p(\mathbb{R}^2)$-boundedness can also been obtained with some standard modifications. The corresponding result for $H^{\varepsilon_0}_{U}$ can be found in \cite{SS}.  For $U\in C^\infty$ with some additional curvature conditions, Christ, Nagel, Stein and Wainger \cite{CNSW} proved the $L^p(\mathbb{R}^2)$-boundedness of $H^{\varepsilon_0}_{U}$ and $M^{\varepsilon_0}_{U}$ for $p>1$.
Furthermore, Lacey and Li \cite{LL2} established the $L^2(\mathbb{R}^2)$-boundedness of $H^{\varepsilon_0}_{U}$ if $U\in C^{\alpha}$ with $\alpha>1$ and a suitable Kakeya maximal operator is bounded on $L^2(\mathbb{R}^2)$. For further progress towards these two conjectures, we refer to \cite{B1,B2,G2,G1,G3} and the references therein. Moreover, Lacey and Li \cite{LL1} brought tools from time-frequency analysis into the problem of Hilbert transforms along vector fields, which made the major breakthrough in terms of the regularity of $U$. To state their results, we first introduce some definitions.

Let $\psi:\ \mathbb{R}\rightarrow\mathbb{R}$ be a smooth function supported on $\{t\in \mathbb{R}:\ 1/2\leq |t|\leq 2\}$ with the property that $0\leq \psi(t)\leq 1$ and $\Sigma_{k\in \mathbb{Z}} \psi_k(t)=1$ for any $t\neq 0$, where $\psi_k(t):=\psi (2^{-k}t)$. Here and hereafter, for any $k\in \mathbb{Z}$ and $j=1,2$,  we denote $P^{(j)}_k$  the \emph{Littlewood-Paley projection in the $j$-th variable corresponding to $\psi_k$}
\begin{align}\label{1.0}
P^{(1)}_kf(x_1,x_2):=\int_{-\infty}^{\infty}  f(x_1-z,x_2)\check{\psi}_k(z)\,\textrm{d}z, \qquad
P^{(2)}_kf(x_1,x_2):=\int_{-\infty}^{\infty}  f(x_1,x_2-z)\check{\psi}_k(z)\,\textrm{d}z.
\end{align}

Lacey and Li obtained the following results.
\begin{theorem}\label{main result 6} (\cite{LL1}) Let $U:\ \mathbb{R}^2\rightarrow  \mathbb{R}$ be a measurable function. For any $p\geq 2$, there exists a positive constant $\tilde{C}_p$, independent of $k$, such that
\begin{align*}
\left\|H^\infty_{U}P^{(2)}_kf\right\|_{L^{2,\infty}(\mathbb{R}^{2})}\leq \tilde{C}_2 \left\|P^{(2)}_kf\right\|_{L^{2}(\mathbb{R}^{2})} \quad \quad {\rm for \ any } \ k\in {\mathbb Z},
\end{align*}
and for any $p>2$,
\begin{align*}
\left\|H^\infty_{U}P^{(2)}_kf\right\|_{L^{p}(\mathbb{R}^{2})}\leq \tilde{C}_p \left\|P^{(2)}_kf\right\|_{L^{p}(\mathbb{R}^{2})} \quad \quad {\rm for \ any } \ k\in {\mathbb Z}.
\end{align*}
\end{theorem}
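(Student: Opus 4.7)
The plan is to follow the Lacey--Li time-frequency framework after reducing to a single frequency scale. First, I would reduce to the case $k=0$ by a scaling in the second variable. With the dilation $(D_\lambda f)(x_1,x_2):=f(x_1,\lambda x_2)$, a direct computation gives
\begin{align*}
H^\infty_U(D_\lambda g) = D_\lambda\, H^\infty_{U_\lambda} g, \qquad U_\lambda(x_1,x_2):=\lambda U(x_1, x_2/\lambda),
\end{align*}
with $U_\lambda$ measurable whenever $U$ is. Since $P^{(2)}_k$ and $P^{(2)}_0$ are intertwined by $D_{2^k}$, the claimed inequalities with $k$-independent constants reduce to the $k=0$ case, uniformly over measurable $U$. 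From now on I may assume the $x_2$-frequency support of $P^{(2)}_0 f$ lies in $|\xi_2|\sim 1$.

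Next, dyadically decompose the Hilbert kernel: write $\mathrm{p.v.}\frac{1}{t}=\sum_{j\in\mathbb{Z}}\frac{\psi_j(t)}{t}$ and accordingly
\begin{align*}
H^\infty_U P^{(2)}_0 f = \sum_{j\in\mathbb{Z}} T_{U,j}\, P^{(2)}_0 f, \qquad T_{U,j}g(x_1,x_2):=\int g(x_1-t,\, x_2-U(x_1,x_2)t)\,\frac{\psi_j(t)}{t}\,\mathrm{d}t.
\end{align*}
Because the $x_2$-frequency of $P^{(2)}_0 f$ is $\sim 1$, the behavior of $T_{U,j}$ on it is governed by the single quantity $|U(x_1,x_2)|\cdot 2^j$. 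I would then partition the sum into three regimes: the low regime $|U|\cdot 2^j\ll 1$, the high regime $|U|\cdot 2^j\gg 1$, and the critical regime $|U|\cdot 2^j\sim 1$.

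In the low regime, the $x_2$-shift is much smaller than the $x_2$-scale, so to leading order $P^{(2)}_0 f(x_1-t,\, x_2-Ut)\approx P^{(2)}_0 f(x_1-t,\, x_2)$ and the partial sum reduces to a truncated Hilbert transform in $x_1$ (with remainder controlled by a strong maximal function), bounded on every $L^p$ with $1<p<\infty$. In the high regime, expanding $P^{(2)}_0 f$ via Fourier inversion in $x_2$ and integrating by parts in $t$ produces a factor $(|U|\cdot 2^j)^{-N}$ for every $N$, since the phase $e^{-it\xi_2 U(x)}$ has derivative of size $|U|\gg 2^{-j}$ on the support of $\psi_j$; this bound is pointwise in $U(x)$ and summable in $j$ over this regime. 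Both easy regimes contribute uniformly bounded pieces for every $1<p<\infty$.

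The main obstacle is the critical regime, where neither smoothing nor oscillation is available. For this I would invoke the time-frequency machinery of \cite{LL1}: cover the effective phase space by wave-packet tiles $T$, each localized in space to a rectangle $R_T$ oriented along $(1,U_T)$ for a representative $U_T$ and in $x_2$-frequency to an interval $\omega_T$; decompose the critical piece as a sum of tile operators restricted to $\{x:U(x)\in\omega_T\}$; and organize tiles into trees via a Carleson-type selection algorithm. Tree operators are controlled by size and density lemmas of Bessel type, and summing over trees via a John--Nirenberg inequality yields the restricted weak type $(2,2)$ estimate; refined $L^p$ tree estimates together with Marcinkiewicz interpolation then produce the strong $L^p$ bound for $p>2$. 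This tile analysis is the deepest part of the argument and the source of the restriction $p\geq 2$, which is sharp in view of the Besicovitch--Kakeya type counterexamples for merely measurable $U$.
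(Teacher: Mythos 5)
First, a point of reference: the paper does not prove this statement. Theorem \ref{main result 6} is quoted as background from Lacey and Li \cite{LL1} (it is the straight-line case $\gamma(t)=t$, which falls outside the curvature hypothesis {\bf(H.)} used everywhere else in the paper), so there is no in-paper proof to compare against. Your proposal therefore has to stand on its own, and as it stands it has two problems. The first is structural: the entire substantive content of the theorem --- the tile decomposition, trees, size and density lemmas, the restricted weak-type $(2,2)$ bound and the $L^p$ bounds for $p>2$ --- is dispatched by ``invoke the time-frequency machinery of \cite{LL1}.'' Since the statement to be proved \emph{is} the Lacey--Li theorem, this is circular; what remains is only the preliminary reduction, and that reduction is where the second problem lies.

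The high-regime argument is not correct as stated. After Fourier inversion in $x_2$ the $t$-integrand is $e^{-iU(x)t\xi_2}\,\mathcal{F}_2f(x_1-t,\xi_2)\,\psi_j(t)/t$, and integration by parts in $t$ also differentiates $\mathcal{F}_2f(x_1-t,\xi_2)$ in its first slot, which is uncontrolled because $f$ carries no localization in $\xi_1$. Equivalently, the full phase is $-t\xi_1-U(x)t\xi_2$, whose $t$-derivative $\xi_1+U(x)\xi_2$ vanishes on the resonant set $\xi_1=-U(x)\xi_2$; since $|\xi_2|\sim1$ and $\xi_1$ ranges over all of $\mathbb{R}$, no uniform factor $(|U|2^j)^{-N}$ is available, and the claimed pointwise, $j$-summable bound fails. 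To salvage this one must first insert a Littlewood--Paley decomposition in the \emph{first} variable (exactly as the paper does in the proof of Theorem A with the projections $P^{(1)}_{j-l_z-l}$): the pieces with $|\xi_1|$ far from $|U\xi_2|$ are then handled by non-stationary phase or maximal functions, but the diagonal piece $|\xi_1|\sim|U(x)\xi_2|$ survives at \emph{every} scale $j$ with $|U|2^j\gtrsim1$, not just at the finitely many scales with $|U|2^j\sim1$. For curves satisfying {\bf(H.)} that diagonal piece is killed by the local smoothing estimate of Section 2; for $\gamma(t)=t$ there is no curvature and hence no such gain, which is precisely why the whole weight of the Lacey--Li proof sits there. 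In short, your trichotomy misallocates the difficulty: the hard part is hidden inside what you call the high regime, and the step you use to dismiss it is false.
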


\noindent It should be pointed out that the weak $L^2(\mathbb{R}^2)$ estimates are sharp for measurable vector fields $U$.

Let $\gamma:\ \mathbb{R}\rightarrow  \mathbb{R}$ be a continuous curve with $\gamma(0)=0$. We  next will focus  on the study of the Hilbert transform and  corresponding maximal function along the variable plane curve $(t,U(x_1, x_2)\gamma(t))$.  Similarly, we define:

$$H^{\varepsilon_0}_{U,\gamma}f(x_1,x_2):=\mathrm{p.\,v.}\int_{-\varepsilon_0}^{\varepsilon_0}
f\left(x_1-t,x_2-U(x_1,x_2)\gamma(t)\right)\,\frac{\textrm{d}t}{t};$$

$$M^{\varepsilon_0}_{U,\gamma}f(x_1,x_2):=\sup_{0<\varepsilon<\varepsilon_0}
\frac{1}{2\varepsilon}\int_{-\varepsilon}^{\varepsilon}\left|f(x_1-t,x_2-U(x_1,x_2)\gamma(t))\right|
\,\textrm{d}t.$$

Denote ${[t]}^\alpha:={|t|}^\alpha  \ {\rm or}\  {\rm sgn(t)}{|t|}^\alpha.$  The following results were obtained in \cite{MR,GHLR}.

 \begin{theorem}\label{thm of M-R and GHLR}
 Let $\alpha>0$ and $\alpha\neq 1$. Suppose that $U: \R2\to \RR$ is measurable, then for any $2<p<\infty$,
 \begin{align}\label{M on t alpha}
 \left\|M^{\infty}_{U,[t]^{\alpha}}f\right\|_{L^p(\R2)}\leq C_{p,\alpha}\|f\|_{L^p(\R2)}
\end{align}
 \noindent and
 \begin{align}\label{H on t alpha}
 \left\|H^{\infty}_{U,[t]^{\alpha}}P_k^{(2)}f\right\|_{L^p(\R2)}\leq \tilde{C}_{p,\alpha}\left\|P_k^{(2)}f\right\|_{L^p(\R2)},
\end{align}
 where $C_{p,\alpha},\tilde{C}_{p,\alpha}$ are positive constants that  depend only on $p$ and $\alpha$.
 \end{theorem}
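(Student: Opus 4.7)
The plan is to combine dyadic decomposition in the $t$-variable, the anisotropic scaling symmetry $\gamma(\lambda t)=\lambda^\alpha\gamma(t)$ of the homogeneous curve, and Theorem \ref{main result 6} of Lacey-Li. I would first write $H^\infty_{U,\gamma}=\sum_{j\in\mathbb{Z}} H^j_{U,\gamma}$ with $H^j$ localized by a smooth cutoff $\psi(2^{-j}t)$. Using the anisotropic dilation $A_\lambda f(y):=f(\lambda y_1,\lambda^\alpha y_2)$, the homogeneity of $\gamma$ yields the conjugation identity
\[
H^j_{U,\gamma} \;=\; A_{2^j}\,H^0_{V_j,\gamma}\,A_{2^j}^{-1},\qquad V_j(y):=U(2^j y_1,2^{j\alpha} y_2),
\]
where $V_j$ is again merely measurable. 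Since $A_{2^j}^{-1}$ rescales the second-variable Fourier support by the factor $2^{-j\alpha}$, one obtains $A_{2^j}^{-1}P^{(2)}_k \approx P^{(2)}_{k-\lfloor j\alpha\rfloor}\,A_{2^j}^{-1}$, so \eqref{H on t alpha} reduces to the single-annulus estimate
\[
\bigl\|H^0_{V,\gamma}\,P^{(2)}_\ell g\bigr\|_{L^p(\R2)} \;\leq\; c_\ell\,\bigl\|P^{(2)}_\ell g\bigr\|_{L^p(\R2)}
\]
uniformly in measurable $V$, with $\{c_\ell\}$ summable in $\ell\in\mathbb{Z}$.

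For the single-annulus estimate the Fourier multiplier is
\[
m^0(x;\xi)=\int \psi(t)\,e^{-2\pi i (t\xi_1+V(x)\gamma(t)\xi_2)}\,\frac{\mathrm dt}{t},
\]
with $|t|\sim 1$ and $|\xi_2|\sim 2^\ell$. For $\ell\ll 0$ I would Taylor-expand the second exponential to isolate a truncated Hilbert transform in the $x_1$-direction plus remainders of size $2^\ell|V(x)|$, which after a further dyadic decomposition of $V(x)$ contributes $c_\ell\lesssim 2^\ell$. For $\ell\gg 0$, van der Corput's lemma in the $t$-variable, using $\gamma''\neq 0$ (which is exactly what $\alpha\neq 1$ provides), yields $|m^0(x;\xi)|\lesssim 2^{-\ell/2}$ outside neighborhoods of stationary-phase points; inside those neighborhoods a tangent-line approximation reduces the operator, via an affine change of variables, to a vector-field Hilbert transform, at which point Theorem \ref{main result 6} applies uniformly for $p>2$. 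Promoting the $L^2$ decay to $L^p$ by interpolation with trivial $L^\infty$ bounds on the oscillatory portion produces $c_\ell\lesssim 2^{-\varepsilon|\ell|}$ for some $\varepsilon>0$, which sums.

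For the maximal function \eqref{M on t alpha} I would linearize $M^\infty_{U,\gamma}f(x)=\frac{1}{2\tau(x)}\int_{-\tau(x)}^{\tau(x)}|f(x_1-t,x_2-U(x)\gamma(t))|\,\mathrm dt$ with a measurable $\tau(x)$, decompose dyadically in $\tau$, apply the same scaling symmetry to reduce to a unit-scale maximal operator, and dominate that pointwise by the Hardy-Littlewood maximal function together with a directional Hilbert-type piece handled as above; a Littlewood-Paley square function in $x_2$ then sums the scales for $p>2$. The trivial $L^\infty$ bound for the maximal operator removes the need for a Littlewood-Paley projection in the statement.

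The hardest part will be the resonant band $\ell=O(1)$ of the single-annulus estimate, where the oscillatory gain from $\gamma''\neq 0$ is not yet available and one must genuinely reduce, via the tangent-line approximation, to the Lacey-Li vector-field problem. Since the rescaled slope $V$ is merely measurable, no regularity of $V$ is at hand, and the argument has to invoke the time-frequency machinery of \cite{LL1}; for this reason the restriction $p>2$ in the conclusion mirrors, and is pinned down by, the sharp range in Theorem \ref{main result 6}.
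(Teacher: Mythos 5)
Your skeleton --- dyadic decomposition in $t$, parabolic rescaling via the homogeneity of $[t]^{\alpha}$, a low-frequency part dominated by maximal operators and a (maximal truncated) Hilbert transform in $x_1$, and an oscillatory high-frequency part --- does match the structure of the proof in \cite{GHLR} and of this paper's Theorem A, which contains the present statement as the special case $\gamma(t)=[t]^{\alpha}$ of hypothesis {\bf(H.)}. The gap is in the high-frequency part. The operator $H^{0}_{V,\gamma}P^{(2)}_{\ell}$ is \emph{not} a Fourier multiplier operator, because $V$ is a merely measurable function of $x$; the pointwise van der Corput bound $|m^{0}(x;\xi)|\lesssim 2^{-\ell/2}$ therefore does not yield an $L^{2}$ operator bound via Plancherel. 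After linearization ($V(x)=2^{k_{0}}u$ with $u\in[1,2)$) one must control $\sup_{k_{0}}\sup_{u}\left|T_{u}P^{(2)}_{\ell}f\right|$, and at $L^{2}$ the Sobolev embedding in $u$ costs a full factor $2^{\ell}$ from $\partial_{u}T_{u}$, which exactly cancels the van der Corput gain: no decay in $\ell$ survives. Indeed, if your pieces were bounded on $L^{2}$ with decay $2^{-\varepsilon|\ell|}$, summation would give the strong $L^{2}$ theorem, which is false (test against $f=\mathbf{1}_{B(0,1)}$). What rescues the argument --- and what actually pins down $p>2$ --- is a genuine local smoothing estimate: the $L^{p}(\mathrm{d}x\,\mathrm{d}u)$ norm of $T_{u}P_{\kappa}f$ gains an \emph{extra} factor $2^{-\kappa/p}$ beyond the fixed-$u$ bound (Proposition \ref{proposition 2.1}, i.e.\ \eqref{eq:20.16}--\eqref{eq:20.17}, proved via \cite[Proposition 3.2]{B}; in \cite{GHLR} via local smoothing for $A_{u,t^{\alpha}}$), and this extra $1/p$ is precisely what absorbs the $2^{\kappa}$ loss from the $u$-derivative in the Sobolev embedding. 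Your proposal contains no substitute for this step, and "interpolation with trivial $L^{\infty}$ bounds" cannot manufacture it.

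The proposed fallback --- a tangent-line approximation near stationary points followed by an appeal to Theorem \ref{main result 6} --- does not work and is not needed. Near a stationary point $t_{0}$ the curve deviates from its tangent line by $\sim\gamma''(t_{0})(t-t_{0})^{2}$, which against frequency $2^{\ell}$ contributes $O(1)$ to the phase unless the $t$-interval has length $o(2^{-\ell/2})$; one would then need to sum $\gtrsim 2^{\ell/2}$ such intervals with no per-interval gain, since Lacey--Li provides boundedness but no decay. None of the known proofs (Marletta--Ricci \cite{MR}, \cite{GHLR}, or this paper) route through the flat vector-field theorem: the curvature is exploited constructively through stationary phase plus local smoothing, not perturbatively, and the restriction $p>2$ comes from the counterexample and the $L^{p}(\mathrm{d}u)$-versus-$\sup_{u}$ tradeoff, not from the sharp range in Theorem \ref{main result 6}. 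Your low-frequency and maximal-function reductions are essentially correct, modulo resumming the non-decaying leading term into a single bounded operator ($M^{(1)}$ plus $H^{*(1)}$), as in the paper's treatment of $H^{I}_{U,\gamma}$ and $S^{a}_{U_{z},l_{z}}$.
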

We note that (\ref{M on t alpha}) was first proved by Marletta and Ricci \cite{MR}, in which the authors used Bourgain's result on the circular maximal operator \cite{Bour86} as a black box. Later, Guo, Hickman, Lie and Roos \cite{GHLR} adopted another approach that is more self-contained to reprove (\ref{M on t alpha}). They also proved (\ref{H on t alpha}). Moreover, under the condition that $U$ is Lipschitz,  Guo {\it et al.}  \cite{GHLR} obtained  the following result for $1<p\leq 2$.

 \begin{theorem}\label{M p<2 t alpha}
 Let $\alpha>0$ and $\alpha\neq 1$. Suppose that $U: \R2\to \RR$ is Lipschitz, then  there exists $\varepsilon_0>0$ depending only on $\|U\|_{\textrm{Lip}}$ and $\alpha$ such that
 $$
 \left\|M^{\varepsilon_0}_{U,[t]^{\alpha}}f\right\|_{L^p(\R2)}\leq C_{p,\alpha}\|f\|_{L^p(\R2)}
 $$
for any $1<p\leq 2$, where $C_{p,\alpha}$ is a positive constant that depends only on $p$ and $\alpha$.
 \end{theorem}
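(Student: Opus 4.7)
The plan is to exploit the Lipschitz regularity of $U$ together with the smallness of $\varepsilon_0$ to reduce $M^{\varepsilon_0}_{U,[t]^{\alpha}}$ to its translation-invariant counterpart along the fixed curve $(t,c\,[t]^{\alpha})$, whose $L^p(\R2)$-boundedness for every $p>1$ is provided by the classical theorem of Nagel, Stein and Wainger (the hypothesis $\alpha\neq 1$ supplies the nonvanishing curvature).

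First I would linearize and dyadically decompose in the scale parameter, writing
\begin{align*}
M^{\varepsilon_0}_{U,[t]^{\alpha}} f(x) \leq \sum_{2^{-m}\leq\varepsilon_0} \bigl|A_m^U f(x)\bigr|, \qquad A_m^U f(x) := \int_{\RR} f\bigl(x_1-t,\,x_2-U(x)[t]^{\alpha}\bigr)\,\psi_m(t)\,\mathrm{d}t,
\end{align*}
where $\psi_m$ is a smooth bump adapted to $\{|t|\sim 2^{-m}\}$. Next I would perform a Littlewood--Paley decomposition in the second variable, $f=\sum_{k\in\ZZ}P_k^{(2)}f$, and analyze $A_m^U P_k^{(2)}$ according to the threshold $2^k|U(x)|\,2^{-m\alpha}\sim 1$. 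In the non-oscillatory regime $2^{k-m\alpha}|U(x)|\ll 1$ the integrand is essentially unperturbed in the $x_2$-direction and $A_m^U P_k^{(2)} f$ is pointwise-dominated by the Hardy--Littlewood maximal function of $P_k^{(2)} f$; in the oscillatory regime $2^{k-m\alpha}|U(x)|\gtrsim 1$ the curvature of the model curve produces van der Corput-type decay, yielding the same kind of gains that underlie the single-annulus bound of Theorem~\ref{thm of M-R and GHLR}.

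The Lipschitz hypothesis enters through a \emph{freezing} step: for each $m$, cover $\R2$ by a bounded-overlap family $\{B_{m,j}\}$ of balls of radius $r_m$, and replace $U(x)$ by $U_{m,j}:=U(x_{m,j})$ on $B_{m,j}$. The frozen operator $A_m^{U_{m,j}}$ is translation-invariant and enjoys a uniform $L^p$-bound ($p>1$) from Nagel--Stein--Wainger, so summing over $j$ with bounded overlap gives the single-scale estimate. The freezing error on the frequency-$k$ piece is, by a first-order Taylor expansion in $x_2$, essentially $\|U\|_{\mathrm{Lip}}\,r_m\,2^{-m\alpha}\,2^k$ times the input, and $r_m$ must be chosen to balance this loss against the oscillatory gain described above; the truncation $\gamma(2\varepsilon_0)\leq 1/(4\|U\|_{\mathrm{Lip}})$ provides exactly the smallness needed for the sum over $m$ to converge. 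The main obstacle is the three-parameter interplay between the freezing radius $r_m$, the $x_2$-frequency $2^k$, and the dyadic scale $2^{-m}$: below $p=2$, where the single-annulus estimate from Theorem~\ref{thm of M-R and GHLR} is no longer applicable in isolation, one must coordinate these parameters so that the constant-coefficient Nagel--Stein--Wainger bound serves as the main term while the variable-coefficient error absorbs the factor $\|U\|_{\mathrm{Lip}}\gamma(\varepsilon_0)\leq 1/4$ through a summable Littlewood--Paley argument.
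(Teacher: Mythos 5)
Your outline gets the architecture right up to a point (single-scale decomposition, Littlewood--Paley in $x_2$, the threshold $2^k|U(x)|2^{-m\alpha}\sim 1$, domination of the non-oscillatory part by Hardy--Littlewood maximal functions), but the mechanism you propose for the regime $1<p\leq 2$ --- freezing $U$ on balls of radius $r_m$ and balancing the freezing error $\|U\|_{\textrm{Lip}}\,r_m\,2^{-m\alpha}2^k$ against oscillatory gain --- does not close, and you essentially concede this by flagging the ``three-parameter interplay'' as the main obstacle without resolving it. Concretely: for the localization to be compatible with the operator at scale $2^{-m}$ you need $r_m\gtrsim 2^{-m}$ and $r_m\gtrsim |U(x)|2^{-m\alpha}$ (the averaging moves points that far), while in the critical regime $2^k2^{-m\alpha}|U(x)|\sim 1$ the freezing error is $\sim\|U\|_{\textrm{Lip}}\,r_m/|U(x)|$, so summability forces $r_m\ll |U(x)|/\|U\|_{\textrm{Lip}}$. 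Since $|U(x)|$ is neither bounded above nor below, no uniform choice of $r_m$ satisfies both constraints; moreover the oscillatory gain you invoke is an $L^2$ (van der Corput/local smoothing) gain and produces no decay at all below $p=2$, so there is nothing for the error to be absorbed into. A correct proof must decouple the size of $U$ from its Lipschitz constant, which your scheme does not do.

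The proof in the paper (Section 4, giving the generalization Theorem B(ii), with the $[t]^\alpha$ case being [GHLR]'s) uses three ingredients absent from your proposal. First, instead of freezing, one replaces $U$ on the level set $\{V_z=k_0\}$ by a genuinely Lipschitz extension $\tilde{\tilde{U}}^{(k_0)}$ taking values in $[2^{k_0},2^{k_0+1})$ and performs the change of variables $X_1=x_1-t$, $X_2=x_2-\tilde{\tilde{U}}^{(k_0)}_z\gamma(t)$; the Jacobian is $1-\partial_{x_2}\tilde{\tilde{U}}^{(k_0)}_z\gamma(t)\geq 1/2$ precisely because $\gamma(2\varepsilon_0)\leq 1/(4\|U\|_{\textrm{Lip}})$, yielding a single-scale, no-decay $L^p$ bound for \emph{all} $p\in(1,\infty)$ (see \eqref{eq:4.9}). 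Second, the discrepancy between the extension and the original truncated coefficient is controlled pointwise by the lacunary maximal function $M_{L,\gamma}$ of Proposition \ref{main result 5}, whose $L^p$-boundedness for $p>1$ is itself proved by the Nagel--Stein--Wainger \emph{lacunary-directions bootstrapping} of vector-valued inequalities (iterating $q_1=\infty$, $q_2>2$ against $q_1=q_2$ to descend from $p>2$ to $p>4/3$, then $p>8/7$, etc.) --- this is a different NSW theorem from the constant-coefficient curve bound you cite, and it is the device that actually gets below $p=2$. Third, the resulting no-decay bound for $p\in(1,2]$ is interpolated with the decaying $p>2$ bound \eqref{eq:3.8} to recover geometric decay in $(k,m)$ at every $p\in(1,2]$, which is what makes the double sum converge. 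Without these steps (or a working substitute), the proposal has a genuine gap.
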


\smallskip

As the development of  the Hilbert transforms along curves $(t,\gamma(t))$, {\it an interesting question is   whether these results of Theorems \ref{thm of M-R and GHLR} and \ref{M p<2 t alpha} can be extended to more general curves $\gamma$  obeying some suitable smoothness and curvature conditions.}

In the present paper,  we have obtained some  results for this question. Firstly,  we  state the needed conditions on the curves.

\smallskip

{\bf Hypothesis on curves (H.).} Assume $\gamma\in C(\mathbb{R})\bigcap C^{N}(\mathbb{R}^+)$  with $N\in\mathbb{N}$ large enough and $\gamma(0)=0$. And $\gamma$ is either odd or even, and increasing on $\mathbb{R}^+$. Moreover, $\gamma$ satisfies the following three conditions:

\begin{enumerate}\label{curve gamma}
  \item[\rm(i)] there exists a positive constant $C_1$ such that $|(\frac{\gamma'}{\gamma''})'(t)|\geq C_1$ for any $t\in \mathbb{R}^+$;
  \item[\rm(ii)] there exist positive constants $\{C^{(j)}_{2}\}_{j=1}^{2}$ such that $|\frac{t^{j}\gamma^{(j)}(t)}{\gamma(t)}|\geq C^{(j)}_{2}$  for any $t\in \mathbb{R}^+$;
  \item[\rm(iii)] there exist positive constants $\{C^{(j)}_{3}\}_{j=1}^{N}$ such that $|\frac{t^{j}\gamma^{(j)}(t)}{\gamma(t)}|\leq C^{(j)}_{3}$ for any $t\in \mathbb{R}^+$.
\end{enumerate}

\begin{remark}\label{remark 1}
We give some remarks for the hypothesis above.
  (i) of {\bf(H.)}, including some other similar forms, has been applied in the study of  the Hilbert transforms along the variable curves $(t, u(x_1)\gamma(t))$ with one-variable coefficients (e.g., \cite{Lie, CZx}),  and  the bilinear Hilbert transform along the curves $(t,\gamma(t))$ (\cite{LY3}).  (ii) and (iii) of {\bf(H.)} imply the following ``doubling property" of the curves. For $t>0$, there holds
 \begin{align}\label{eq:1.5}
e^{C^{(1)}_{2}/2}\leq \frac{\gamma(2t)}{\gamma(t)}\leq e^{C^{(1)}_{3}}
  \end{align}
and
\begin{align}\label{eq:1.6}
\textrm{either} \quad  e^{C^{(2)}_{2}/2 C^{(1)}_{3}}\leq \frac{\gamma'(2t)}{\gamma'(t)}\leq e^{C^{(2)}_{3}/C^{(1)}_{2}}\quad  \textrm{or}\quad e^{-C^{(2)}_{3}/C^{(1)}_{2}}\leq \frac{\gamma'(2t)}{\gamma'(t)}\leq e^{-C^{(2)}_{2}/2 C^{(1)}_{3}}.
  \end{align}
In fact, it follows from (ii) and (iii) of {\bf(H.)}  that
  \begin{align*}
C^{(1)}_{2}\leq \frac{t\gamma'(t)}{\gamma(t)}\leq C^{(1)}_{3}\quad \textrm{and} \quad C^{(2)}_{2}/ C^{(1)}_{3}\leq \left|\frac{t\gamma''(t)}{\gamma'(t)}\right|\leq C^{(2)}_{3}/C^{(1)}_{2} \quad  {\rm for \ any \ }t\in \mathbb{R}^+.
  \end{align*}
Let $F(t):=\ln \gamma(t)$ for any $t\in \mathbb{R}^+$, we then have $C^{(1)}_{2}/t\leq F'(t)\leq C^{(1)}_{3}/t$ for any $t\in \mathbb{R}^+$. On the other hand, by the Lagrange mean value theorem, there exists  $\theta\in[1,2]$ such that $F(2t)-F(t)=F'(\theta t)t$. Hence, we have $C^{(1)}_{2}/2\leq F(2t)-F(t)\leq C^{(1)}_{3}$, which further implies \eqref{eq:1.5}. Similarly, if consider $G(t):=\ln \gamma'(t)$ for any $t\in \mathbb{R}^+$, we can get \eqref{eq:1.6}. We note that  some similar ``doubling properties" of curves have  occurred in obtaining the boundedness of the Hilbert transforms along  curves $(t,\gamma(t))$ (cf. \cite{CCCD}).
\end{remark}

\begin{example}
Let us list some examples of curves satisfying {\bf(H.)}. Since $\gamma(t)$ is odd or even and $\gamma(0)=0$, we write only the part for $t>0$.
\begin{enumerate}
\item[\rm(1)] for any $t>0$, $\gamma(t):=t^\alpha$, where $\alpha\in(0,\infty)$ and $\alpha\neq1$;
\item[\rm(2)] for any $k\in \mathbb{N}$ and $t>0$, $\gamma(t):=\sum\limits_{i=1}^{k} t^{\alpha_i}$, where either $\alpha_i\in(0,1)$ for all $i=1,2, \cdots, k$, or $\alpha_i>1$  for all $i=1,2, \cdots, k$.
\item[\rm(3)] for any $t>0$, $\gamma(t):=t^\alpha\log(1+t)$, where $\alpha>1$;
\item[\rm(4)] for any $t>0$, $\gamma(t):=(t\sin t)\mathbf{1}_{\{0<t<\varepsilon_0\}}(t)$, or $(t-\sin t)\mathbf{1}_{\{0<t<\varepsilon_0\}}(t)$, or $(1-\cos t)\mathbf{1}_{\{0<t<\varepsilon_0\}}(t)$ , where $\varepsilon_0$ is small enough.
\end{enumerate}
\end{example}

Next, we will state  two main results of this paper.  The first one is as follows.

\medskip

\noindent {\bf Theorem A.} \  Let $U:\ \mathbb{R}^2\rightarrow  \mathbb{R}$ be a measurable function, and the curve $\gamma$ satisfies {\bf(H.)}.
Then, for any  $p>2$, there exists a positive constant $C$, independent of $U$,  such that the following estimates hold for all $f\in L^{p}(\mathbb{R}^{2})$,
\begin{enumerate}
\item [\rm(i)] \quad $\left\|H^\infty_{U,\gamma}P^{(2)}_kf\right\|_{L^{p}(\mathbb{R}^{2})}\leq C \left\|P^{(2)}_kf\right\|_{L^{p}(\mathbb{R}^{2})}$, where $C$ does not depend on  $k\in \mathbb{Z};$

\item [\rm(ii)] \quad $\left\|M^\infty_{U,\gamma}f\right\|_{L^{p}(\mathbb{R}^{2})}\leq C \left\|f\right\|_{L^{p}(\mathbb{R}^{2})}$.
\end{enumerate}

 Observe that Theorem A  is the generalization of  Theorem \ref{thm of M-R and GHLR}  from the special curve $[t]^\alpha$ to more general curves $\gamma(t)$. As for (i) of Theorem A, the reason that we consider single annulus $L^p(\mathbb{R}^2)$-boundedness, in place of $L^p(\mathbb{R}^2)$-boundedness, is that the latter fails for every $p\in(1,\infty)$ even in the case  $\gamma(t)=t^2$. It will follow from a straightforward modification of Karagulyan's counter-example in the case  $\gamma(t)=t$ (\cite{K}). The range on $p$ in Theorem A is sharp. It can be seen  that  Theorem A  fails for $p\leq 2$,  if we assume $f$ to be the characteristic function of the unit ball.

 As a direct corollary of Theorem A and linearization,  we have the results below about the \emph{directional Hilbert transforms} along  curves $\gamma$.

\begin{corollary}
If  $\gamma$ satisfies {\bf(H.)}. Then, for any  $p>2$, there exists a positive constant $C$, such that the following estimates hold for all $f\in L^{p}(\mathbb{R}^{2})$,
\begin{enumerate}
\item [\rm(i)] \quad $\left\|  \sup_{\lambda\in \mathbb{R}} \left| H^\infty_{\lambda,\gamma}P^{(2)}_kf\right | \right\|_{L^{p}(\mathbb{R}^{2})}\leq C \left\|P^{(2)}_kf\right\|_{L^{p}(\mathbb{R}^{2})}$\quad  uniformly in $k\in \mathbb{Z}$;
\item [\rm(ii)] \quad $\left\|  \sup_{\lambda\in \mathbb{R}} \left| M^\infty_{\lambda,\gamma}f\right | \right\|_{L^{p}(\mathbb{R}^{2})}\leq C \left\|f\right\|_{L^{p}(\mathbb{R}^{2})}$.
\end{enumerate}
\end{corollary}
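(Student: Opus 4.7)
The plan is to deduce the corollary from Theorem~A by the standard linearization trick. For each measurable $U:\mathbb{R}^2\to\mathbb{R}$, the operators $M^\infty_{\lambda,\gamma}$ (resp.\ $H^\infty_{\lambda,\gamma}$) with constant $\lambda$ are pointwise special cases of $M^\infty_{U,\gamma}$ (resp.\ $H^\infty_{U,\gamma}$) obtained by taking $U(x_1,x_2)\equiv \lambda$; the pointwise supremum over $\lambda\in\mathbb{R}$ can then be realized---up to a harmless factor---by a single variable-coefficient operator with a judiciously chosen measurable $U$. Since the constants in Theorem~A are uniform in $U$, this will immediately give both (i) and (ii).

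Concretely, for part~(ii), first take $f\in C_c(\mathbb{R}^2)$ so that for every $(x_1,x_2)$ the map $\lambda\mapsto M^\infty_{\lambda,\gamma}f(x_1,x_2)$ is lower semicontinuous (by continuity of the translation in $\lambda$ and Fatou applied to the $\varepsilon$-averages). Consequently
\begin{align*}
G(x_1,x_2):=\sup_{\lambda\in\mathbb{R}}M^\infty_{\lambda,\gamma}f(x_1,x_2)=\sup_{\lambda\in\mathbb{Q}}M^\infty_{\lambda,\gamma}f(x_1,x_2)
\end{align*}
is measurable. Enumerating $\mathbb{Q}=\{\lambda_n\}_{n\ge1}$ and writing
\begin{align*}
E_n:=\Bigl\{(x_1,x_2):M^\infty_{\lambda_n,\gamma}f(x_1,x_2)\ge \tfrac12 G(x_1,x_2)\Bigr\}\setminus\bigcup_{m<n}E_m,
\end{align*}
the simple function $U(x_1,x_2):=\sum_n \lambda_n\mathbf{1}_{E_n}(x_1,x_2)$ is measurable and by construction satisfies $M^\infty_{U,\gamma}f(x_1,x_2)\ge \tfrac12 G(x_1,x_2)$ wherever $G$ is finite. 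A routine truncation (replace $G$ by $\min(G,N)$ and let $N\to\infty$ via monotone convergence) handles the set where $G=\infty$, and Theorem~A~(ii) applied to this $U$ yields $\|G\|_{L^p}\le 2C\|f\|_{L^p}$. Density together with sublinearity of $f\mapsto\sup_\lambda M^\infty_{\lambda,\gamma}f$ extends the bound to all of $L^p(\mathbb{R}^2)$.

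Part~(i) is entirely parallel, with the same selection procedure applied to $|H^\infty_{\lambda,\gamma}P^{(2)}_kf|$ instead of $M^\infty_{\lambda,\gamma}f$; continuity in $\lambda$ is again available on a dense class (e.g.\ for $P^{(2)}_kf$ Schwartz), so the pointwise supremum is measurable and a measurable $U$ with $|H^\infty_{U,\gamma}P^{(2)}_kf|\ge\tfrac12\sup_\lambda|H^\infty_{\lambda,\gamma}P^{(2)}_kf|$ exists. Because the constant in Theorem~A~(i) depends on neither $U$ nor $k$, the resulting estimate is uniform in $k$.

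The only point requiring any care is the measurability of the pointwise supremum and the construction of the selector $U$; both are standard once one has (lower) semicontinuity in $\lambda$ on a dense class of inputs, and no new analytic input beyond Theorem~A is needed---which is exactly why the statement is recorded as a corollary.
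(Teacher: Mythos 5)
Your argument is correct and is exactly the linearization the authors have in mind: the paper offers no details beyond the phrase ``by Theorem A and linearization,'' and your measurable-selector construction (restricting to rational $\lambda$ for measurability on a dense class, choosing $U=\sum_n\lambda_n\mathbf{1}_{E_n}$ to attain half the supremum, and invoking the $U$-uniformity of the constants in Theorem A) is the standard way to make that phrase precise. No discrepancy with the paper's approach.
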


\medskip

The second main result of this paper, Theorem B, will study the analogue of Theorem A for the case $1<p\leq 2$ under the additional conditions that $U$ is Lipschitz and making a $\varepsilon_0$-truncation. (ii) of Theorem B is also a extension of Theorem \ref{M p<2 t alpha} to more general curves $\gamma$.

\noindent {\bf Theorem B.} Let $U:\ \mathbb{R}^2\rightarrow  \mathbb{R}$ be a Lipschitz function and the curve $\gamma$ satisfies {\bf(H.)}.  Then, for any $1<p\leq 2$, there exist constants $C>0$ and $\varepsilon_0>0$ with $\gamma(2 \varepsilon_0)\leq 1/4\|U\|_{\textrm{Lip}}$,  such that the following estimates hold for all $f\in L^{p}(\mathbb{R}^{2})$,
\begin{enumerate}
\item [\rm(i)] \quad $\left\|H^{\varepsilon_0}_{U,\gamma}P^{(2)}_kf\right\|_{L^{p}(\mathbb{R}^{2})}\leq C \left\|P^{(2)}_kf\right\|_{L^{p}(\mathbb{R}^{2})}$;
\item [\rm(ii)] \quad $\left\|M^{\varepsilon_0}_{U,\gamma}f\right\|_{L^{p}(\mathbb{R}^{2})}\leq C \left\|f\right\|_{L^{p}(\mathbb{R}^{2})}$,
\end{enumerate}
where  $C$ is independent of $U$ and $k\in \mathbb{Z}$.

\begin{remark}\label{remark 2}

We now point out  the differences of  proofs between  our theorems and Theorems \ref{thm of M-R and GHLR}  and  \ref{M p<2 t alpha}.  In the homogeneous curve case   $\gamma(t)=[t]^\alpha$,
 the following  special property
  \begin{align}\label{homogenous}
  \gamma(ab)=\gamma(a)\gamma(b), \quad  {\rm for \ any \ } a>0, b>0,
 \end{align}
 plays a very important role in the proofs of Theorems \ref{thm of M-R and GHLR}  and  \ref{M p<2 t alpha} (\cite{GHLR}). More precisely, it is convenient to use
\begin{align}\label{unit decomposition}
1=\Sigma_{l\in \mathbb{Z}} \psi(2^{-l}(u_z^{(0)})^\beta t),~\textrm{see}~\cite[(4.9)]{GHLR},\quad \textrm{and} \quad 1=\Sigma_{l\in \mathbb{Z}} \psi(2^{-l}2^{\frac{v_z}{\alpha}}(u_z^{(0)})^\beta t),~\textrm{see}~ \cite[(5.2)]{GHLR}
\end{align}
to split these operators considered. Here  the purpose of adding $2^{\frac{v_z}{\alpha}}$ in \cite[(5.2)]{GHLR} is to make the terms of $u_z$ and $\gamma(2^{-\frac{v_z}{\alpha}})$  cancel out, after applying this special property \eqref{homogenous}. However,  in the general curve case, we can't continue to use these partition of unity \eqref{unit decomposition} to split our operators, since $\gamma(2^l(u_z^{(0)})^{-\beta}t)\neq \gamma(2^l) \gamma((u_z^{(0)})^{-\beta})\gamma(t)$ and $\gamma(2^l2^{-\frac{v_z}{\alpha}}(u_z^{(0)})^{-\beta} t)\neq \gamma(2^l)\gamma(2^{-\frac{v_z}{\alpha}}) \gamma((u_z^{(0)})^{-\beta})\gamma(t)$ in general.  As a result, we have to use the classical partition of unity, i.e.,
\begin{align}\label{classical unit}
1=\Sigma_{l\in \mathbb{Z}} \psi(2^{-l}t),
\end{align}
to split our operators, though it will greatly increase the difficulty  of  the proof.  Indeed, even though we have used the classical partition of unity \eqref{classical unit},  we still will encounter the difficulty of $\gamma(2^lt)\neq \gamma(2^l)\gamma(t)$.  In order to overcome this difficulty and   separate $2^l$ from $\gamma(2^lt)$,  we replace  $\gamma(2^lt)$ by $\Gamma_l(t):=\gamma(2^lt)/\gamma(2^l)$ in  our proof, which is based on the observation that the main properties of   $\Gamma_l$ are very similar to  that of $\gamma$.

On the other hand,   based on this special property  \eqref{homogenous} of $\gamma(t)=[t]^\alpha$,  Guo {\it et al.} \cite{GHLR} can reduce the proof of \eqref{H on t alpha} to obtaining   a local smoothing estimate to
$$A_{u,t^\alpha} f(x,y):=\int_{-\infty}^{\infty} f(x-ut,y-ut^\alpha)\psi_0(t)\, \textrm{d}t.
$$
In general curves case  considered, we may not reduce our theorems  to some local smoothing estimate to  $A_{u,\gamma(t)}$  since the lack of this special property  \eqref{homogenous}.  Furthermore,  the critical point of the phase function in $A_{u,t^\alpha}$ is independent of $u$, but  the critical point in  general curve case will depend on $u$, which  leads to   essential  difficulties.
In this paper,  we   combine  the theory of oscillatory integrals and interpolation   with the result of   Beltran, Hickman and Sogge \cite[Proposition 3.2]{B}  to show a kind of variable coefficient local smoothing estimate.  Our proofs of Theorems A and B  rely on  this  variable coefficient local smoothing estimate,  the Littlewood-Paley theory and a bootstrapping  argument similar that of Nagel, Stein and Wainger \cite{NSW}.
\end{remark}

We should  point out that the study of the boundedness properties of the Hilbert transforms along curves when $U$ is a constant,  first appeared in the work of Jones \cite{J} and Fabes and Rivi\`ere \cite{FR}  for studying the behavior of the constant coefficient parabolic differential operators. Later, the study has been extended to  more general classes of curves; see, for example, \cite{SW,NVWW,CNVWW,CCVWW,CVWW}.

In the case of $U(x_1,x_2)=u(x_1)$, many important results have been obtained. For example,  when $U(x_1,x_2)=x_1$,    Carbery,  Wainger and  Wright \cite{CWW} obtained the $L^{p}(\mathbb{R}^2)$-boundedness of $H^\infty_{U,\gamma}$ and $M^\infty_{U,\gamma}$ for $p\in(1,\infty)$,  where $\gamma\in C^{3}(\mathbb{R})$ is either an odd or even, convex  on $\mathbb{R}^+$ satisfying $\gamma(0)=\gamma'(0)=0$ and
$\frac{t\gamma''(t)}{\gamma'(t)}$ is decreasing and bounded below on $\mathbb{R}^+$.
It is worth  noting that these conditions allow the curve to be flat at the origin (e.g. $\gamma(t)=e^{-1/t^2}$). Then, Bennett \cite{BJ} extended the $L^2(\mathbb{R}^2)$ results of \cite{CWW}  to the case  $U(x_1,x_2)=P(x_1)$, where $P(x_1)$ is a polynomial. Some other related results about the one-variable coefficient case, we refer to \cite{CP,Lie,CZx,Y}.

For the general two-variable  case,  we would like to  mention more useful results besides the relevant results introduced at the beginning of this paper.  Seeger and Wainger \cite{SeW} obtained the $L^{p}(\mathbb{R}^2)$-boundedness of $H^\infty_{U,\gamma}$ and $M^\infty_{U,\gamma}$ for $p\in(1,\infty)$, where $U(x_1,x_2)\gamma(t)$ was written as $\Gamma(x_1,x_2,t)$, under some convexity and doubling hypothesis about $\Gamma$. Recently, for $\gamma(t):=[t]^\alpha$ (where  $0<\alpha<1$ or $\alpha>1$),  Di Plinio,  Guo,  Thiele and  Zorin-Kranich \cite{DGTZ} obtained the $L^{p}(\mathbb{R}^2)$-boundedness, $p\in(1,\infty)$, of $H^{\varepsilon_0}_{U,\gamma}$ from some positive constant $\varepsilon_0$ and a Lipschitz function $U:\ \mathbb{R}^2\rightarrow  \mathbb{R}$ satisfying $\|U\|_{\textrm{Lip}}\ls 1$, where  Jones's beta numbers from \cite{Jo} play an important role in their proof.

\smallskip

The layout of the article is as follows. In Section 2, we get a kind of variable coefficient local smoothing estimate based on Beltran, Hickman and Sogge \cite[Proposition 3.2]{B}. In Subsection 3.1, we show (i) of Theorem A, whose proof relies  heavily on the variable coefficient local smoothing estimate in Section 2. In Subsection 3.2, we prove (ii) of Theorem A.  Its proof is less difficult  than (i) of Theorem A,  since  the  maximal function in the proof don't need  a summation process relative to  Hilbert transform. In Section 4, we first obtain the $L^{p}(\mathbb{R}^2)$-boundedness, $p\in(1,\infty)$,  of the maximal function associated with plane curve $(t,2^j\gamma(t))$ in lacunary coefficient, which will play a key role in our proof. Then we  prove (i) of Theorem B in Subsection 4.1 by bootstrapping an iterated interpolation argument in the spirit of Nagel,  Stein and  Wainger \cite{NSW}. Finally, in Subsection 4.2  we prove (ii) of Theorem B by following the approach of (ii) of Theorem A and (i) of Theorem B.

\smallskip

Throughout this paper, the letter ``$C$"   will denote (possibly different) constants that are independent of the essential variables.  $a\ls b$ (or $a\gs b$) means that there exist a positive constant $C$ such that $a\le Cb$ (or $a\ge Cb$). $a\approx b$ means $a\ls b$ and $b\ls a$. $\hat{f}$ and $\check{f}$ shall denote the {Fourier transform} and the {inverse Fourier transform}  of $f$, respectively. For $1<q\leq\infty$, we will denote $q'$ the adjoint number of $q$, i.e. ${1}/{q}+ {1}/{q'}=1$.  For any set $E$, we use $\mathbf{1}_E$ to denote the {characteristic function} of $E$. For any $a<b$, we will denote $\dashint_{a}^bf(t)\,\textrm{d}t:=\frac{1}{|b-a|} \int_{a}^bf(t)\,\textrm{d}t$.

\section{A key local smoothing estimate}

In this section, the aim is to prove a kind of variable coefficient local smoothing estimate, which will be used to prove Theorem A in Section 3.  In the proof of  this kind of variable coefficient local smoothing estimate,   \cite[Proposition 3.2]{B} is  a powerful  tool.

Let $\Gamma_{r}(t):=\gamma(2^rt)/ \gamma(2^r)$ with $r\in \mathbb{R}$.  Assume that $\phi,\psi$ are  smooth functions supported on $\{t\in \mathbb{R}:\ 1/2\leq |t|\leq 2\}$.   For any $u\in [1,2)$,  denote
 \begin{align}\label{eq:20.a}
T_{u} f(x_1,x_2):=\int_{-\infty}^{\infty}f\big(x_1-t,x_2- u\Gamma_{r}(t)\big)\phi(t)\,\textrm{d}t
\end{align}
and
\begin{align}\label{eq:20.b}
P_\kappa f(x_1,x_2):=\int_{-\infty}^{\infty}\int_{-\infty}^{\infty}e^{i(x_1\xi+x_2\eta)}\hat{f}(\xi,\eta)\psi\left(2^{-\kappa }(\xi^2+\eta^2)^{\frac{1}{2}}\right)\,\textrm{d}\xi\,\textrm{d}\eta.
\end{align}

\begin{proposition}\label{proposition 2.1}
For any $p>2$, there exist positive constants $\delta$ and $C$, independent of $r$ and $\kappa$, such that
\begin{align}\label{eq:20.13}
\left\|\sup_{u\in [1,2)}\left|T_{u}P_\kappa f\right|\right\|_{L^{p}(\mathbb{R}^{2})}\leq C 2^{-\delta \kappa}\|f\|_{L^{p}(\mathbb{R}^{2})} \quad \textrm{for}~ \textrm{all} ~\kappa\in \mathbb{N}.
\end{align}
\end{proposition}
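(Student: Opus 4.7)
The plan is to recast $T_u P_\kappa f$ as a Fourier integral operator with phase
$$\Phi(x_1,x_2,t,\xi;u) := (x_1-t)\xi_1 + (x_2 - u\Gamma_{r}(t))\xi_2,$$
amplitude $a(t,\xi):=\phi(t)\psi(2^{-\kappa}|\xi|)$, and parameter $u\in[1,2)$, and then to apply \cite[Proposition 3.2]{B} to this one-parameter family of averaging operators along the curves $t\mapsto(t,u\Gamma_r(t))$. A key preparatory observation is that, since $\Gamma_r(t)=\gamma(2^rt)/\gamma(2^r)$, one has
$$\frac{t^j\Gamma_r^{(j)}(t)}{\Gamma_r(t)} = \frac{s^j\gamma^{(j)}(s)}{\gamma(s)}\bigg|_{s=2^rt}, \qquad \left(\frac{\Gamma_r'}{\Gamma_r''}\right)'(t) = \left(\frac{\gamma'}{\gamma''}\right)'(2^rt),$$
so the hypothesis {\bf(H.)} transfers verbatim to $\Gamma_r$ with the \emph{same} constants, uniformly in $r\in\mathbb{R}$.

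Next, I would verify the geometric non-degeneracies required by \cite[Proposition 3.2]{B}. After an angular decomposition of the frequency annulus $|\xi|\sim 2^\kappa$ that isolates the sector $|\xi_2|\gtrsim 2^\kappa$ (on the complementary sector, repeated integration-by-parts in $t$ gives rapid decay in $\kappa$, which is stronger than what we need), the mixed Hessian $\partial^2_{x\xi}\Phi$ has non-vanishing Jacobian because $\Gamma_r''(t)\neq 0$, a quantitative consequence of {\bf(H.)}(ii). The cinematic-curvature condition needed for the supremum in $u$ reduces, after stationary phase in $t$, to the quantitative non-vanishing of $(\Gamma_r'/\Gamma_r'')'(t)$, which is exactly {\bf(H.)}(i). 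The upper bounds in {\bf(H.)}(iii) supply the control of higher derivatives entering the symbol estimates. With all such non-degeneracies in hand uniformly in $r$ and $u$, \cite[Proposition 3.2]{B} then yields
$$\left\|\sup_{u\in[1,2)}|T_u P_\kappa f|\right\|_{L^p(\mathbb{R}^2)} \leq C\, 2^{-\delta\kappa}\|f\|_{L^p(\mathbb{R}^2)}$$
for some $\delta=\delta(p)>0$, with constants independent of $r$ and $\kappa$.

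Should \cite[Proposition 3.2]{B} be applicable only to the fixed-$u$ average, a Sobolev embedding in $u$ closes the gap: by the fundamental theorem of calculus and H\"older,
$$\left\|\sup_{u\in[1,2)}|T_u P_\kappa f|\right\|_{L^p}^p \lesssim \|T_1P_\kappa f\|_{L^p}^p + \sup_u\|T_u P_\kappa f\|_{L^p}^{p-1}\cdot \sup_u\|\partial_u T_u P_\kappa f\|_{L^p},$$
and $\|\partial_u T_u P_\kappa f\|_{L^p}\lesssim 2^\kappa\|f\|_{L^p}$ because $\partial_u$ brings down a factor of $\Gamma_r(t)\xi_2$, equivalent to an $x_2$-derivative on a function with output frequency $|\xi_2|\sim 2^\kappa$. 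Choosing $p$ large enough that the total exponent on $\kappa$ is negative yields the claim for large $p$, and interpolation with the trivial $L^\infty$ bound on the maximal function extends the range to all $p>2$.

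The main obstacle I anticipate is arranging the reduction to \cite[Proposition 3.2]{B} so that every implicit constant is uniform in $r$. The scale invariance of {\bf(H.)} recorded in the first paragraph is the crux: without the normalization by $1/\gamma(2^r)$ the amplitude of the curve would vary with $r$, and both the cinematic-curvature lower bound and the symbol estimates would degrade. This is exactly the reason, explained in Remark~\ref{remark 2}, for replacing the dilates $\gamma(2^\ell t)$ by their normalizations $\Gamma_\ell(t)$ throughout the global strategy of the paper.
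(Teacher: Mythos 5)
Your overall strategy is the paper's: normalize to $\Gamma_r$ so that {\bf(H.)} holds uniformly in $r$, isolate the frequency sector where a stationary point exists, verify the cinematic curvature condition via {\bf(H.)}(i) (indeed the relevant quantity is $\Gamma_r'\Gamma_r'''-(\Gamma_r'')^2$, which is controlled by $(\Gamma_r'/\Gamma_r'')'=(\gamma'/\gamma'')'(2^r\cdot)$), and invoke \cite[Proposition 3.2]{B}. However, the passage from that input to the maximal estimate \eqref{eq:20.13} contains genuine gaps.

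First, \cite[Proposition 3.2]{B} is a local smoothing estimate: it bounds the $L^p$ norm jointly in $(x_1,x_2,u)$, not $\sup_{u}$. So your primary route does not conclude, and the burden falls entirely on your fallback. But your fallback Sobolev embedding is written with $\sup_u\|T_uP_\kappa f\|_{L^p}$ and $\sup_u\|\partial_uT_uP_\kappa f\|_{L^p}$, i.e.\ only fixed-$u$ information. The fixed-$u$ gain at $L^p$, $p\ge 2$, is $2^{-\kappa/p}$ (interpolating the $L^2$ van der Corput bound $2^{-\kappa/2}$ with the trivial $L^\infty$ bound), while $\partial_u$ costs a full factor $2^{\kappa}$. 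Your product then scales as $2^{-\kappa(p-1)/p}\cdot 2^{\kappa}2^{-\kappa/p}=2^{0}$: the exponent is exactly zero for \emph{every} $p$, so ``choosing $p$ large enough'' never produces decay in $\kappa$. The point of the paper's reduction \eqref{eq:20.15}--\eqref{eq:20.17} is precisely that one must keep the $L^p(\mathrm{d}u)$ averages, prove the genuinely stronger bound $\big(\int_1^2\|T_uP_\kappa f\|_p^p\,\mathrm{d}u\big)^{1/p}\lesssim 2^{-(\delta+1/p)\kappa}\|f\|_p$ from \cite[Proposition 3.2]{B}, and let the extra $2^{-\kappa/p}$ absorb the $2^{\kappa/p}$ that the $u$-derivative contributes after H\"older.

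Second, your range extension is backwards: interpolating a large-$p_0$ estimate with the trivial $L^\infty$ bound only yields $p\in[p_0,\infty]$, not $2<p<p_0$. Since \cite[Proposition 3.2]{B} is applied for $p\ge 6$, the paper covers $2<p<6$ by interpolating with the $L^2$-averaged estimate $\big(\int_1^2\|\tilde T^1_{u,-}P_\kappa f\|_2^2\,\mathrm{d}u\big)^{1/2}\lesssim 2^{-\kappa/2}\|f\|_2$ coming from the $|\lambda|^{-1/2}$ symbol bound; some such $L^2$ input is indispensable in your argument as well. (Minor further omissions: \cite[Proposition 3.2]{B} is local in $(x,u)$, so a globalization step via translation and weights is needed; and the stationary point exists only for one sign of $t$ when $\gamma$ is even, which requires the splitting $\tilde m^1_\pm$.)
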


\smallskip

Firstly, by using the fundamental theorem of calculus to $|T_u|^p$ and $\textrm{H}\ddot{\textrm{o}}\textrm{lder}$'s inequality, we have
\begin{align*}
\sup_{u\in [1,2)}|T_ug|^p\ls |T_1g|^p+\left(\int_1^2|T_ug|^p\,\textrm{d}u \right)^{\frac{1}{p'}}\left(\int_1^2|\partial_uT_ug|^p\,\textrm{d}u \right)^{\frac{1}{p}},
\end{align*}
 which  enables us to reduce the proof of   \eqref{eq:20.13} to  prove the following \eqref{eq:20.15}-\eqref{eq:20.17}:
\begin{align}\label{eq:20.15}
\left\|T_1P_\kappa f\right\|_{L^{p}(\mathbb{R}^{2})}\ls 2^{-\delta \kappa}\|f\|_{L^{p}(\mathbb{R}^{2})}\quad \textrm{for}~ \textrm{all} ~\kappa\in \mathbb{N}~ \textrm{and}~p\in (2,\infty);
\end{align}
\begin{align}\label{eq:20.16}
\left(\int_1^2 \left\|T_uP_\kappa f\right\|^p_{L^{p}(\mathbb{R}^{2})}\,\textrm{d}u\right)^{\frac{1}{p}}\ls 2^{-\left(\delta+\frac{1}{p}\right) \kappa}\|f\|_{L^{p}(\mathbb{R}^{2})}\quad \textrm{for}~ \textrm{all} ~\kappa\in \mathbb{N}~ \textrm{and}~p\in (2,\infty);
\end{align}
and
\begin{align}\label{eq:20.17}
\left(\int_1^2 \left\|\partial_uT_uP_\kappa f\right\|^p_{L^{p}(\mathbb{R}^{2})}\,\textrm{d}u\right)^{\frac{1}{p}}\ls 2^{-\left(\delta+\frac{1}{p}\right) \kappa}2^\kappa\|f\|_{L^{p}(\mathbb{R}^{2})}\quad \textrm{for}~ \textrm{all} ~\kappa\in \mathbb{N}~ \textrm{and}~p\in (2,\infty).
\end{align}

\bigskip
Next, we provide the proofs of \eqref{eq:20.15}-\eqref{eq:20.17} in turn.
\bigskip

\noindent {\bf Proof of \eqref{eq:20.15}:} It is clear that
\begin{align*}
\left\|T_1P_\kappa f\right\|_{L^{\infty}(\mathbb{R}^{2})}\ls \|f\|_{L^{\infty}(\mathbb{R}^{2})}.
\end{align*}
By interpolation, it only needs to prove \eqref{eq:20.15} for $p=2$.

Consider the case of $p=2$.  Let the multiplier of $T_1P_\kappa f$ be
\begin{align*}
m_1(\xi,\eta):=\psi\left(2^{-\kappa }(\xi^2+\eta^2)^{\frac{1}{2}}\right)\int_{-\infty}^{\infty}e^{-it\xi-i\Gamma_{r}(t)\eta}\phi(t)\,\textrm{d}t,
\end{align*}
and the corresponding phase function be $\Phi_1(t):=-t\xi-\Gamma_{r}(t)\eta$, we then have $\Phi'_1(t)=-\xi-\Gamma'_{r}(t)\eta$ and $\Phi''_1(t)=-\Gamma''_{r}(t)\eta$.  Assume  $|\xi|\approx 2^\mathcal{M}$ and $|\eta|\approx 2^\mathcal{N}$, then we have $\max\{\mathcal{M},\mathcal{N}\}\approx\kappa$. From (ii) of {\bf(H.)} and \eqref{eq:1.5}, we have $|\Gamma''_{r}(t)| \gs 1$ with a bound independent of $r$, which further yields $|\Phi''_1(t)|\gs 2^\mathcal{N}$. We apply van der Corput's lemma to obtain
\begin{align}\label{eq:20.18}
|m_1(\xi,\eta)|\ls 2^{-\frac{1}{2}\mathcal{N}}.
\end{align}

If $\max\{\mathcal{M},\mathcal{N}\}=\mathcal{N}$, we use Plancherel's theorem to obtain \eqref{eq:20.15} with $\delta=1/2$ and $p=2$. Consider the case $\max\{\mathcal{M},\mathcal{N}\}=\mathcal{M}$. If $\mathcal{M}\leq 32C_3^{(1)}e^{C^{(1)}_{3}}\mathcal{N}$, from \eqref{eq:20.18}, we also obtain \eqref{eq:20.15} with $p=2$ and some $\delta>0$; If $\mathcal{M}\geq 32C_3^{(1)}e^{C^{(1)}_{3}}\mathcal{N}$, by (iii) of {\bf(H.)} and \eqref{eq:1.5}, we have $|\Gamma'_{r}(t)|\leq 2 C_3^{(1)}e^{C^{(1)}_{3}}$. Therefore, $|\Phi'_1(t)|\geq |\xi|/2-|\Gamma'_{r}(t)\eta|\gs 2^\mathcal{M}\approx 2^{\kappa}$. This, combined with van der Corput's lemma and the fact that $\Phi'_1(t)$ is monotonic, implies
$
|m_1(\xi,\eta)|\ls  2^{-\kappa}.
$
Applying Plancherel's theorem again, we  obtain \eqref{eq:20.15} with $\delta=1$ and $p=2$.

We complete  the proof of \eqref{eq:20.15}.

\bigskip

Before  giving the proof of \eqref{eq:20.16}, we state and show  some necessary lemmas.

\begin{lemma}\label{lemma 2.1}
Let $t\in [1/2,2]$. We have the following inequalities hold uniformly in $r$,
\begin{enumerate}
  \item[\rm(i)] $  e^{-C^{(1)}_{3}}\leq\Gamma_{r}(t)\leq e^{C^{(1)}_{3}}$;
  \item[\rm(ii)] $ C^{(1)}_2/2e^{C^{(1)}_{3}} \leq|\Gamma_{r}'(t)|\leq 2e^{C^{(1)}_{3}}C^{(1)}_3$;
  \item[\rm(iii)] $C^{(2)}_2/4e^{C^{(1)}_{3}}\leq|\Gamma_{r}''(t)|\leq 4e^{C^{(1)}_{3}}C^{(2)}_{3}$;
  \item[\rm(iv)] $|\Gamma_{r}^{(j)}(t)|\leq  2^{j} e^{C^{(1)}_{3}}C^{(j)}_{3}$ \quad \textrm{for} \textrm{all} $2\leq j\leq N$;
  \item[\rm(v)] $|((\Gamma_{r}')^{-1})^{(k)}(t)|\lesssim 1$ \quad \textrm{for} \textrm{all} $0\leq k< N$, \textrm{where} $(\Gamma_{r}')^{-1}$ \textrm{is} \textrm{the} \textrm{inverse} \textrm{function} \textrm{of} $\Gamma_{r}'$.
\end{enumerate}
\end{lemma}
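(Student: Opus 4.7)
The plan is to prove the five bounds in turn, in each case reducing the statement about $\Gamma_r$ to the hypotheses (H.) evaluated at $2^r t$, using the key algebraic identity
\begin{equation*}
\Gamma_r^{(j)}(t) \;=\; \frac{2^{jr}\gamma^{(j)}(2^r t)}{\gamma(2^r)} \;=\; \frac{(2^r t)^{j}\gamma^{(j)}(2^r t)}{\gamma(2^r t)}\cdot\frac{\Gamma_r(t)}{t^{j}}.
\end{equation*}
Once (i) is in hand, this identity transforms the upper and lower bounds of (H.)(ii)--(iii) into the desired bounds on $\Gamma_r^{(j)}$ for $j\geq 1$.

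For (i), I would set $F(s):=\ln\gamma(s)$, so that $\ln\Gamma_r(t)=F(2^r t)-F(2^r)=\int_{2^r}^{2^r t}F'(s)\,\mathrm{d}s$. Hypotheses (H.)(ii)--(iii) yield $C_2^{(1)}/s\le F'(s)\le C_3^{(1)}/s$, and since $t\in[1/2,2]$ the integral has absolute value at most $C_3^{(1)}\ln 2\le C_3^{(1)}$; exponentiating gives (i). This step reproduces in spirit the doubling argument of Remark \ref{remark 1}, restricted to the unit-scale window.

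For (ii)--(iv), I would apply the displayed identity: the factor $(2^r t)^{j}\gamma^{(j)}(2^r t)/\gamma(2^r t)$ is controlled above by $C_3^{(j)}$ from (H.)(iii) and, when $j\in\{1,2\}$, below by $C_2^{(j)}$ from (H.)(ii); the factor $\Gamma_r(t)$ is controlled by (i); and the factor $1/t^{j}$ is trapped in $[2^{-j},2^{j}]$ on $[1/2,2]$. Multiplying these three bounds gives exactly the constants in (ii)--(iv).

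For (v), since (iii) gives $|\Gamma_r''(t)|\gtrsim 1$ uniformly in $r$, the map $\Gamma_r'$ is a $C^{N-1}$ diffeomorphism of $[1/2,2]$ onto its image, and $(\Gamma_r')^{-1}$ is well defined. The derivatives of $(\Gamma_r')^{-1}$ admit Faà di Bruno expansions that are rational in $\{\Gamma_r^{(j)}(t)\}_{2\le j\le k+1}$ with denominators a positive power of $\Gamma_r''(t)$; numerators are bounded above by (iv) and the denominator is bounded below by (iii). This furnishes (v) by induction on $k$.

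The bounds are essentially free once the identity for $\Gamma_r^{(j)}$ is in place, so there is no real obstacle; the only point that warrants care is (i), because it is what converts the pointwise lower bound in (ii)--(iii) into the \emph{integrated} bound $\Gamma_r(t)\asymp 1$ on the fixed window $[1/2,2]$, and it is used silently in every subsequent item. The uniformity in $r$ throughout is automatic: $r$ enters only through the argument $2^r t$ at which (H.) is evaluated, and (H.) holds on all of $\mathbb{R}^{+}$.
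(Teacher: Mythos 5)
Your proposal is correct and follows essentially the same route as the paper: items (i)--(iv) are exactly the "straightforward verification" from \eqref{eq:1.5} and (ii)--(iii) of {\bf(H.)} that the paper invokes (your explicit identity for $\Gamma_r^{(j)}$ reproduces the stated constants precisely), and for (v) the paper likewise combines the lower bound on $|\Gamma_r''|$ with the bounds of (iv) and the inverse-function derivative formula, arguing by induction on the order of the derivative.
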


\begin{proof}[Proof of Lemma \ref{lemma 2.1}.] (i), (ii), (iii) and (iv) are straightforward to verify by \eqref{eq:1.5}, (ii) and (iii) of {\bf(H.)}.  As for (v),  noting that $\gamma'$ is strictly monotonic on $\mathbb{R}^+$ with $\gamma'(\mathbb{R}^+)=\mathbb{R}^+$ from (ii) of {\bf(H.)} and \eqref{eq:1.6}, then there exists the inverse function $(\Gamma_{r}')^{-1}$ of $\Gamma_{r}'$. It is easy to see that \eqref{eq:1.6} and Lemma \ref{lemma 2.1}(ii) imply $|(\Gamma_{r}')^{-1}(t)|\approx 1$. For $k=1$, as Lemma \ref{lemma 2.1}(iii), we have
\begin{align}\label{eq:20.20}
|\Gamma_{r}''((\Gamma_{r}')^{-1}(t))|\approx 1,
\end{align}
which further leads to $((\Gamma_{r}')^{-1})'(t)=1/\Gamma_{r}''((\Gamma_{r}')^{-1}(t)) \ls 1$. On the other hand, as Lemma \ref{lemma 2.1}(iv), we also have
\begin{align}\label{eq:20.21}
|\Gamma_{r}^{(k)}((\Gamma_{r}')^{-1}(t))|\ls 1\quad \textrm{for}~ \textrm{all}~ 2\leq k\leq N.
\end{align}
By simple calculation, it is easy to see that \eqref{eq:20.20} and \eqref{eq:20.21} imply  Lemma \ref{lemma 2.1}(v).
\end{proof}

\begin{lemma}\label{lemma 2.2}
Let $c\in C^M(\mathbb{R})$ supported on $\{t\in \mathbb{R}:\  |t|\ls 1\}$, $M\in\mathbb{N} $. Assume that $|c^{(k)}(t)|\ls |t|^{2m-k}$ for all $k\leq M$, where $m,k\in\mathbb{N} $ and $m+1\leq M$. Then, for all $\lambda>0$, we have
\begin{align*}
\left|\int_{-\infty}^{\infty} e^{-i\lambda t^{2}}c(t)\,\textrm{d}t\right|\lesssim \lambda^{-\frac{1}{2}-m}.
\end{align*}
\end{lemma}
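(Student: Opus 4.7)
The plan is to prove this by the standard stationary phase dichotomy: split the integral at the scale $|t| \approx \lambda^{-1/2}$ determined by the quadratic phase, estimate the small piece trivially using the vanishing of $c$ at the origin, and the large piece by repeated integration by parts against $e^{-i\lambda t^2} = \frac{1}{-2i\lambda t}\frac{d}{dt} e^{-i\lambda t^2}$. The hypothesis $|c^{(k)}(t)|\lesssim |t|^{2m-k}$ is precisely calibrated so that each IBP costs one $\lambda^{-1}$ and two powers of $|t|^{-1}$, matching the $|t|^{2m}$ decay of $c$ at the origin.

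First I would introduce a smooth partition of unity $1 = \chi_0(\lambda^{1/2}t) + \chi_1(\lambda^{1/2}t)$ with $\chi_0$ supported in $\{|s|\leq 2\}$ and $\chi_1$ supported in $\{|s|\geq 1\}$. For the $\chi_0$-piece, I would bound the integral trivially using $|c(t)|\lesssim |t|^{2m}$:
\begin{align*}
\left|\int e^{-i\lambda t^{2}} c(t) \chi_0(\lambda^{1/2}t)\,\textrm{d}t\right| \lesssim \int_{|t|\lesssim \lambda^{-1/2}} |t|^{2m}\,\textrm{d}t \lesssim \lambda^{-\frac{1}{2}-m},
\end{align*}
which is exactly the target bound.

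For the $\chi_1$-piece I would introduce the operator $L^{*}f := \frac{1}{2i\lambda}\partial_t(t^{-1}f)$, noting that $L^{*}$ is the formal transpose of the operator $L$ satisfying $L(e^{-i\lambda t^2}) = e^{-i\lambda t^2}$. Since $c(t)\chi_1(\lambda^{1/2}t)$ is compactly supported, integrating by parts $j$ times gives no boundary terms, and the integral equals $\int e^{-i\lambda t^{2}} (L^{*})^{j}[c(t)\chi_1(\lambda^{1/2}t)]\,\textrm{d}t$. The main computation is to verify inductively that
\begin{align*}
\left|(L^{*})^{j}[c(t)\chi_1(\lambda^{1/2}t)]\right| \lesssim \lambda^{-j}|t|^{2m-2j}
\end{align*}
on $\textrm{supp}(c\cdot\chi_1(\lambda^{1/2}\cdot))$. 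Each application of $L^{*}$ either hits $c$, producing $\lambda^{-1}$ times either $|t|^{-2}c(t)$ or $|t|^{-1}c'(t)$, both bounded by $\lambda^{-1}|t|^{2m-2}$ by hypothesis, or it hits the cutoff $\chi_1(\lambda^{1/2}t)$, producing a factor $\lambda^{1/2}$; but the latter can only occur where $|t|\approx \lambda^{-1/2}$, so $\lambda^{1/2}\approx |t|^{-1}$ and the same scaling bound $\lambda^{-1}|t|^{-2}$ holds. Choosing $j=m+1$ (which is $\leq M$ by the assumption $m+1\leq M$), the bound becomes $\lambda^{-m-1}|t|^{-2}$, and integrating over $\{|t|\gtrsim \lambda^{-1/2}\}\cap\textrm{supp}\,c$ yields
\begin{align*}
\lambda^{-m-1}\int_{\lambda^{-1/2}\lesssim |t|\lesssim 1}|t|^{-2}\,\textrm{d}t \lesssim \lambda^{-m-1}\cdot \lambda^{1/2} = \lambda^{-\frac{1}{2}-m}.
\end{align*}

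The main obstacle is the bookkeeping for $(L^{*})^{j}$ acting on a product, particularly ensuring that derivatives landing on the rescaled cutoff $\chi_1(\lambda^{1/2}t)$ do not spoil the induction; this is handled by the Leibniz rule together with the fact that the scales $\lambda^{1/2}$ and $|t|^{-1}$ coincide on the boundary region. Once this inductive bound is established, combining the two pieces completes the proof.
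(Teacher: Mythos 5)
Your proof is correct and follows essentially the same route as the paper's: both split at the stationary-phase scale (the paper keeps the splitting parameter $\varepsilon$ general and optimizes to $\varepsilon=\lambda^{-1/2}$ at the end, while you fix it from the start), bound the inner piece trivially via $|c(t)|\lesssim|t|^{2m}$, and integrate by parts $m+1$ times on the outer piece with the operator $\frac{1}{2i\lambda}\partial_t(t^{-1}\cdot)$. The only cosmetic difference is your explicit Leibniz bookkeeping for derivatives hitting the rescaled cutoff, which the paper leaves implicit; the argument is the one from Stein's book that the paper cites.
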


\begin{proof}[Proof of Lemma \ref{lemma 2.2}.]  We will adopt a  similar argument to  that in \cite[page 335, Proposition 3]{S}.  Let $\varpi:\ \mathbb{R}\rightarrow\mathbb{R}$ be a smooth function supported on $\{t\in \mathbb{R}:\  |t|\leq 2\}$  such that $\varpi(t)= 1$ on $\{t\in \mathbb{R}:\  |t|\leq 1\}$, we then write
\begin{align*}
\int_{-\infty}^{\infty} e^{-i\lambda t^{2}}c(t)\,\textrm{d}t=\int_{-\infty}^{\infty} e^{-i\lambda t^{2}}c(t)\varpi(t/ \varepsilon)\,\textrm{d}t+\int_{-\infty}^{\infty} e^{-i\lambda t^{2}}c(t)(1-\varpi(t/ \varepsilon))\,\textrm{d}t,
\end{align*}
where $\varepsilon>0$ will be set later. The first integral is bounded by
\begin{align*}
\int_{|t|\leq 2\varepsilon} |t|^{2m}\,\textrm{d}t\ls \varepsilon^{2m+1}.
\end{align*}
The second integral can be written as
\begin{align*}
\int_{-\infty}^{\infty} e^{-i\lambda t^{2}}D^{(k)}\left(c(\cdot)(1-\varpi(\cdot/ \varepsilon))\right)(t)\,\textrm{d}t,
\end{align*}
where $D^{(k)}(f)(t):=\frac{1}{i2\lambda}\frac{\textrm{d}}{\textrm{d}t}(\frac{f(t)}{ t})$. By calculation, the second integral can be further bounded by
\begin{align*}
\lambda^{-k}\int_{|t|\geq\varepsilon} |t|^{2m-2k}\,\textrm{d}t=\lambda^{-k}\varepsilon^{2m-2k+1},\quad \textrm{if}~ 2m-2k+1<0.
\end{align*}
Letting $\varepsilon:=\lambda^{-1/2}$ with $k=m+1$ yields the desired estimate.
\end{proof}

\begin{lemma}\label{lemma 2.3}
Let
\begin{align*}
a(\lambda,t_0):=\int_{-\infty}^{\infty} e^{-i\left(\lambda \frac{ t^{2}}{2}\Gamma_{r}''(t_{0}) +\lambda \frac{t^{3}}{2}\int_{0}^{1}(1-\theta)^2\Gamma_{r}'''(\theta t+t_{0})\,\textrm{d}\theta\right)}\tilde{\phi}(t+t_0)\,\textrm{d}t,
\end{align*}
where $\tilde{\phi}(t):=\phi(t)\chi_{(-\infty,0)}(t)$. Then, for $|\lambda|\gs 1$ and $t_{0}<0$ with $|t_0|\approx 1$, we have
\begin{align*}
\left|\partial_{\lambda}^{\alpha}\partial_{t_{0}}^{\beta}a(\lambda,t_{0})\right|\lesssim |\lambda|^{-\frac{1}{2}-|\alpha|} \quad \textrm{for}~ \textrm{all} ~\alpha,\beta\in \mathbb{N}~ \textrm{and}~\beta<N-2,
\end{align*}
with a bound independent of $r$.
\end{lemma}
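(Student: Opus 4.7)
The plan is to treat $a(\lambda,t_0)$ as a standard oscillatory integral with a nondegenerate stationary phase at $t=0$, reduce to the model phase $\lambda\tau^2$ via Morse's lemma, and then invoke Lemma~\ref{lemma 2.2}. By Taylor's theorem, the exponent equals $-i\lambda\,\Phi(t;t_0)$ with
$$\Phi(t;t_0):=\Gamma_r(t+t_0)-\Gamma_r(t_0)-t\,\Gamma_r'(t_0),$$
so $\partial_t\Phi(0;t_0)=0$ and $\partial_t^2\Phi(0;t_0)=\Gamma_r''(t_0)$ satisfies $|\Gamma_r''(t_0)|\approx 1$ uniformly in $r$ by Lemma~\ref{lemma 2.1}(iii). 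The amplitude $\tilde\phi(\cdot+t_0)$ is smooth and compactly supported in $t$ (since $\phi$ vanishes near $0$, the indicator $\chi_{(-\infty,0)}$ creates no jump), and for $t_0<0$ with $|t_0|\approx 1$ its support contains the critical point $t=0$. Assume without loss of generality that $\Gamma_r''(t_0)>0$; otherwise replace $a$ by its complex conjugate.

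Let $\chi$ be a smooth cutoff equal to $1$ near $0$ with sufficiently small support, and split $a=a_1+a_2$ according to $\chi(t)+(1-\chi(t))=1$. On $\supp(1-\chi)$, the mean value theorem and Lemma~\ref{lemma 2.1}(iii) give $|\partial_t\Phi|=|\Gamma_r'(t+t_0)-\Gamma_r'(t_0)|\gtrsim 1$, so iterated integration by parts---with derivatives of the integrand controlled via Lemma~\ref{lemma 2.1}(iv)---yields $|\partial_\lambda^\alpha\partial_{t_0}^\beta a_2(\lambda,t_0)|\lesssim_M|\lambda|^{-M}$ for every $M\in\mathbb{N}$, provided $\beta\leq N-2$. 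This is negligible compared to the target bound.

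For $a_1$, invoke the parametrized Morse lemma. Writing $\Phi=\tfrac{1}{2}\Gamma_r''(t_0)\,t^2\,B(t;t_0)$ where $B$ is smooth with $B(0;t_0)=1$ (so $B>0$ near $t=0$), the implicit function theorem provides a smooth change of variables $t=\tau\,h(\tau;t_0)$ with $h(0;t_0)=\sqrt{2/\Gamma_r''(t_0)}$ such that $\Phi(\tau h;t_0)=\tau^2$; Lemmas~\ref{lemma 2.1}(iii)-(iv) yield uniform bounds on $h$ and its $t_0$-derivatives up to order $N-2$ (with $h$ bounded away from $0$). After this substitution,
$$a_1(\lambda,t_0)=\int_{-\infty}^\infty e^{-i\lambda\tau^2}\,A(\tau;t_0)\,d\tau,\qquad A(\tau;t_0):=\chi(\tau h)\,\tilde\phi(\tau h+t_0)\,\bigl[h+\tau\partial_\tau h\bigr],$$
with $A(\cdot;t_0)$ smooth and compactly supported in $\{|\tau|\lesssim 1\}$ and $\partial_{t_0}^\beta A$ uniformly bounded for $\beta\leq N-2$.

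Since the new phase $\lambda\tau^2$ is independent of $t_0$,
$$\partial_\lambda^\alpha\partial_{t_0}^\beta a_1(\lambda,t_0)=(-i)^\alpha\int_{-\infty}^\infty e^{-i\lambda\tau^2}\,\tau^{2\alpha}\,\partial_{t_0}^\beta A(\tau;t_0)\,d\tau.$$
The amplitude $c(\tau):=\tau^{2\alpha}\partial_{t_0}^\beta A(\tau;t_0)$ satisfies $|c^{(k)}(\tau)|\lesssim|\tau|^{2\alpha-k}$ (Leibniz rule, with compactness absorbing negative exponents), so Lemma~\ref{lemma 2.2} with $m=\alpha$ yields $|\partial_\lambda^\alpha\partial_{t_0}^\beta a_1(\lambda,t_0)|\lesssim|\lambda|^{-1/2-\alpha}$, completing the proof once combined with the rapid-decay bound on $a_2$. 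The main technical step is the uniform parametrized Morse lemma, which hinges on the nondegeneracy $|\Gamma_r''|\approx 1$ and on the higher-derivative bounds from Lemma~\ref{lemma 2.1}; the restriction $\beta<N-2$ encodes the limited regularity, since $\partial_{t_0}^\beta\Phi$ consumes up to $\beta+2$ derivatives of $\Gamma_r$, which must not exceed $N$.
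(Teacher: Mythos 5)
Your argument is correct, but it takes a genuinely different route from the paper's. You resum the phase via Taylor's theorem into $\lambda\Phi(t;t_0)$ with $\Phi(t;t_0)=\Gamma_{r}(t+t_0)-\Gamma_{r}(t_0)-t\Gamma_{r}'(t_0)$, isolate the nondegenerate critical point at $t=0$, kill the complementary region by non-stationary phase, and use a uniform parametrized Morse lemma to pass to the exact model phase $\lambda\tau^{2}$; since that phase no longer depends on $t_0$, the $t_0$-derivatives cost nothing in $\lambda$, and Lemma \ref{lemma 2.2} with $m=\alpha$ finishes. The paper instead differentiates $a$ directly under the integral sign, keeps the phase in its quadratic-plus-cubic-remainder form, absorbs the cubic oscillatory factor $e^{-i\lambda\frac{t^{3}}{2}\int_{0}^{1}(1-\theta)^{2}\Gamma_{r}'''(\theta t+t_{0})\,\mathrm{d}\theta}$ into the amplitude $c_{\alpha,\beta}$, and verifies the two-term bound $|\partial_{t}^{k}(\cdots)|\lesssim|\lambda|^{\beta}|t|^{2\alpha+2\beta-k}+|t|^{2\alpha-k}$, so that Lemma \ref{lemma 2.2} applied with the pure quadratic phase $\lambda\frac{t^{2}}{2}\Gamma_{r}''(t_{0})$ (with $m=\alpha+\beta$, resp.\ $m=\alpha$) yields $|\lambda|^{-1/2-\alpha}$. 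The cancellation you obtain structurally from the Morse lemma appears there as the pairing of each factor $|\lambda|$ produced by $\partial_{t_{0}}$ of the phase with an extra factor $|t|^{2}$ of vanishing of the amplitude. Your version is conceptually cleaner but shifts the burden onto the uniformity in $r$ and $t_0$ of the change of variables $t=\tau h(\tau;t_0)$ and of $\partial_{\tau}^{k}\partial_{t_{0}}^{\beta}h$, which you correctly reduce to Lemma \ref{lemma 2.1}; the paper's version is more computational but entirely elementary. Both arguments spend the same regularity budget ($\partial_{t_{0}}^{\beta}$ reaches $\Gamma_{r}^{(3+\beta)}$, whence $\beta<N-2$), and both implicitly require $N$ large relative to $\alpha$ as well, which is harmless since only finitely many $\alpha$ are used downstream.
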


\begin{proof}[Proof of Lemma \ref{lemma 2.3}.] We first verify $|\partial_{\lambda}\partial_{t_{0}}a(\lambda,t_{0})|\lesssim |\lambda|^{-3/2}$.  A  computation gives
$$\partial_{\lambda}\partial_{t_{0}}a(\lambda,t_{0})=\int_{-\infty}^{\infty} e^{-i\left(\lambda \frac{ t^{2}}{2}\Gamma_{r}''(t_{0}) +\lambda \frac{t^{3}}{2}\int_{0}^{1}(1-\theta)^2\Gamma_{r}'''(\theta t+t_{0})\,\textrm{d}\theta\right)}c_{1,1}(\lambda,t,t_{0})\,\textrm{d}t,$$
where $c_{1,1}(\lambda,t,t_{0})$ can be expressed as the sum of the following four terms:
\begin{eqnarray*}
\left\{\aligned
&-\left(\lambda\frac{t^{2}}{2}\Gamma_{r}'''(t_{0}) \right)\left(\frac{t^{2}}{2}\Gamma_{r}''(t_{0})+\frac{t^{3}}{2}\int_{0}^{1}(1-\theta)^2\Gamma_{r}'''(\theta t+t_{0})\,\textrm{d}\theta\right)\tilde{\phi}(t+t_{0});\\
&-\left(\lambda\frac{t^{3}}{2} \int_{0}^{1}(1-\theta)^2\Gamma_{r}^{(4)}(\theta t+t_{0})\,\textrm{d}\theta\right)\left(\frac{t^{2}}{2}\Gamma_{r}''(t_{0})+\frac{t^{3}}{2}\int_{0}^{1}(1-\theta)^2\Gamma_{r}'''(\theta t+t_{0})\,\textrm{d}\theta\right)\tilde{\phi}(t+t_{0});\\
&-i\left(\frac{t^{2}}{2}\Gamma_{r}'''(t_{0})+\frac{t^{3}}{2}\int_{0}^{1}(1-\theta)^2\Gamma_{r}^{(4)}(\theta t+t_{0})\,\textrm{d}\theta\right)\tilde{\phi}(t+t_{0});\\
&-i\left(\frac{t^{2}}{2}\Gamma_{r}''(t_{0})+\frac{t^{3}}{2}\int_{0}^{1}(1-\theta)^2\Gamma_{r}'''(\theta t+t_{0})\,\textrm{d}\theta\right)\tilde{\phi}'(t+t_{0}).
\endaligned\right.
\end{eqnarray*}

The fact that $t_{0}<0$ with $|t_0|\approx 1$ and $t+t_{0}<0$ with $|t+t_0|\approx 1$  imply that $|t|\ls 1$, $\theta t+t_{0}=\theta (t+t_{0})+(1-\theta) t_{0}<0$ and $|\theta t+t_{0}|\approx 1$. By Lemma \ref{lemma 2.1}, we  have $|\Gamma_{r}^{(n)}(\theta t+t_{0})|\ls 1$ for all $ n\leq N$ and $|\Gamma_{r}^{(2)}(\theta t+t_{0})|\approx 1$. By simple calculation, one has
\begin{align*}
\left|\partial_{t}^{k}e^{-i\lambda \frac{t^{3}}{2}\int_{0}^{1}(1-\theta)^2\Gamma_{r}'''(\theta t+t_{0})\,\textrm{d}\theta} c_{1,1}(\lambda,t,t_{0})\right|\lesssim |\lambda| |t|^{4-k}+|t|^{2-k} \quad \textrm{for}~ \textrm{all} ~k\in \mathbb{N}~ \textrm{with}~k<N-3.
\end{align*}
Then, by Lemma \ref{lemma 2.2}, we get
\begin{align*}
|\partial_{\lambda}\partial_{t_{0}}a(\lambda,t_{0})|\lesssim |\lambda||\lambda\Gamma_{r}''(t_{0})|^{-\frac{1}{2}-2}+|\lambda\Gamma_{r}''(t_{0})|^{-\frac{1}{2}-1}\lesssim |\lambda|^{-\frac{1}{2}-1}.
\end{align*}

Similarly, one can  write
$$\partial_{\lambda}^{\alpha}\partial_{t_{0}}^{\beta}a(\lambda,t_{0})=\int_{-\infty}^{\infty} e^{-i\left(\lambda \frac{ t^{2}}{2}\Gamma_{r}''(t_{0}) +\lambda \frac{t^{3}}{2}\int_{0}^{1}(1-\theta)^2\Gamma_{r}'''(\theta t+t_{0})\,\textrm{d}\theta\right)}c_{\alpha,\beta}(\lambda,t,t_{0})\,\textrm{d}t,$$
where
\begin{align*}
\left|\partial_{t}^{k}e^{-i\lambda \frac{t^{3}}{2}\int_{0}^{1}(1-\theta)^2\Gamma_{r}'''(\theta t+t_{0})\,\textrm{d}\theta}c_{\alpha,\beta}(\lambda,t,t_{0})\right|\lesssim |\lambda|^{\beta}|t|^{2\alpha+2\beta-k}+|t|^{2\alpha-k} \quad \textrm{for}~ \textrm{all} ~k\in \mathbb{N}~ \textrm{with}~k<N-2-\beta.
\end{align*}
By Lemma \ref{lemma 2.2}, for $N$ large enough, we have
$$\left|\partial_{\lambda}^{\alpha}\partial_{t_{0}}^{\beta}a(\lambda,t_{0})\right|\lesssim |\lambda|^{-\frac{1}{2}-\alpha} \quad \textrm{for}~ \textrm{all} ~\alpha,\beta\in \mathbb{N}~ \textrm{with}~\beta<N-2.$$
This finishes the proof of Lemma \ref{lemma 2.3}.
\end{proof}

\bigskip

In the following, we will prove \eqref{eq:20.16}.

\noindent {\bf Proof of \eqref{eq:20.16}.} \ Let $\Omega(u)\in C_{c}^{\infty}(1/2,5/2)$ and $\Omega(u)=1$ if $u\in[1,2]$, and define
\begin{align*}
\tilde{T}_{u}f(x_1,x_2):=\Omega(u)\int_{-\infty}^{\infty} f(x_1-t,x_2-u\Gamma_{r}(t))\phi(t)\,\textrm{d}t.
\end{align*}
It suffices to prove
\begin{align*}
\Bigg(\int_{\frac{1}{2}}^{\frac{5}{2}} \left\|\tilde{T}_uP_\kappa f\right\|^p_{L^{p}(\mathbb{R}^{2})}\,\textrm{d}u\Bigg)^{\frac{1}{p}}\ls 2^{-\left(\delta+\frac{1}{p}\right) \kappa}\|f\|_{L^{p}(\mathbb{R}^{2})}.
\end{align*}
By taking Fourier transform, we have
\begin{align*}
\tilde{T}_{u}P_\kappa f(x_1,x_2)=\int_{-\infty}^{\infty} \int_{-\infty}^{\infty}  e^{ix_1\xi+ix_2\eta}m(u,\xi,\eta)\hat{f}(\xi,\eta)\,\textrm{d}\xi \,\textrm{d}\eta,
\end{align*}
where the multiplier of $\tilde{T}_uP_\kappa f$ is defined as
\begin{align*}
m(u,\xi,\eta):=\Omega(u)\psi\left(2^{-\kappa }(\xi^2+\eta^2)^{\frac{1}{2}}\right)\int_{-\infty}^{\infty} e^{-i(t\xi+u\Gamma_{r}(t)\eta)}\phi(t)\,\textrm{d}t.
\end{align*}
 Denote the corresponding phase function by
\begin{align*}
\Phi_u(t):=t\xi+u\Gamma_{r}(t)\eta,
\end{align*}
and we  have
\begin{align*}
\Phi'_u(t)=\xi+u\Gamma'_{r}(t)\eta \quad \textrm{and} \quad \Phi''_u(t)=u\Gamma''_{r}(t)\eta.
\end{align*}

By (ii) of Lemma \ref{lemma 2.1}, we get $C^{(1)}_2/2e^{C^{(1)}_{3}}\leq\Gamma_{r}'(t)\leq 2e^{C^{(1)}_{3}}C^{(1)}_3$. Then, if $|\xi|\geq 6e^{C^{(1)}_{3}}C^{(1)}_3|\eta|$, it follows that $|\Phi'_u(t)|\geq |\xi|-|u\Gamma'_{r}(t)\eta|\gtrsim |\xi|+|\eta|$.
Similarly, if $|\eta|\geq (6e^{C^{(1)}_{3}}/C^{(1)}_2) |\xi|$, we obtain $|\Phi'_u(t)|\gtrsim |\xi|+|\eta|$. Integration by parts yields
\begin{align*}
\left|\int_{-\infty}^{\infty} e^{-i(t\xi+u\Gamma_{r}(t)\eta)}\phi(t)\,\textrm{d}t\right| \ls (|\xi|+|\eta|)^{-n} \quad \textrm{for}~  n\leq 4.
\end{align*}
Furthermore, noting that $2^\kappa\approx \sqrt{\xi^2+\eta^2}\approx \max\{|\xi|,|\eta|\}$ and $\kappa\in \mathbb{N}$, one can obtain
\begin{align}\label{eq:20.23}
\left|\partial_{\xi}^{\alpha}\partial_{\eta}^{\beta}\bigg((1-\chi(|\xi|/|\eta|))m(u,\xi,\eta)\bigg)\right|\lesssim 2^{-\kappa}(1+|\xi|+|\eta|)^{-3}\quad \textrm{for}~ \textrm{all}~ \alpha,\beta\in\mathbb{N} ~\textrm{and}~ \alpha+\beta\leq3,
\end{align}
where $\chi \in C_{c}^{\infty}(\mathbb{R}^+)$ such that $\chi=1$ on $[C^{(1)}_2/6e^{C^{(1)}_{3}},6e^{C^{(1)}_{3}}C^{(1)}_3]$.

Denote
\begin{align}\label{eq:20.24}
\tilde{T}^1_{u}P_\kappa f(x_1,x_2):=\int_{-\infty}^{\infty} \int_{-\infty}^{\infty}  e^{ix_1\xi+ix_2\eta}\chi(|\xi|/|\eta|)m(u,\xi,\eta)\hat{f}(\xi,\eta)\,\textrm{d}\xi \,\textrm{d}\eta
\end{align}
and
\begin{align}\label{eq:20.25}
\tilde{T}^2_{u}P_\kappa f(x_1,x_2):=\int_{-\infty}^{\infty} \int_{-\infty}^{\infty}  e^{ix_1\xi+ix_2\eta}(1-\chi(|\xi|/|\eta|))m(u,\xi,\eta)\hat{f}(\xi,\eta)\,\textrm{d}\xi \,\textrm{d}\eta.
\end{align}
Since\footnote{We denote $\mathcal{F}_{\xi,\eta}$ means the \emph{Fourier transform} about $(\xi,\eta)\in \mathbb{R}^2$.} $\mathcal{F}_{\xi,\eta}(2^\kappa(1-\chi(|\xi|/|\eta|))m(u,\xi,\eta))\in L^{1}(\mathbb{R}^2)$ by \eqref{eq:20.23}, then
$$\|\tilde{T}^2_uP_\kappa f\|_{L^{p}(\mathbb{R}^{2})}\ls 2^{-\kappa}\| f\|_{L^{p}(\mathbb{R}^{2})},
$$
which yields
\begin{align*}
\Bigg(\int_{\frac{1}{2}}^{\frac{5}{2}} \left\|\tilde{T}^2_uP_\kappa f\right\|^p_{L^{p}(\mathbb{R}^{2})}\,\textrm{d}u\Bigg)^{\frac{1}{p}}\ls 2^{-\left(\delta+\frac{1}{p}\right) \kappa}\|f\|_{L^{p}(\mathbb{R}^{2})}\quad \textrm{for}~ \textrm{all} ~\kappa\in \mathbb{N}~ \textrm{and}~p\in (2,\infty).
\end{align*}
Thus, it remains to prove
\begin{align}\label{eq:20.27}
\Bigg(\int_{\frac{1}{2}}^{\frac{5}{2}} \left\|\tilde{T}^1_uP_\kappa f\right\|^p_{L^{p}(\mathbb{R}^{2})}\,\textrm{d}u\Bigg)^{\frac{1}{p}}\ls 2^{-\left(\delta+\frac{1}{p}\right) \kappa}\|f\|_{L^{p}(\mathbb{R}^{2})}\quad \textrm{for}~ \textrm{all} ~\kappa\in \mathbb{N}~ \textrm{and}~p\in (2,\infty).
\end{align}
Without loss of generality, we may assume that $\xi,\eta>0$ in \eqref{eq:20.24}, and the other cases can be treated similarly. If $\gamma$ is odd, then  $\Gamma_{r}$ is also  odd. Furthermore, we have $\Phi'_u(t)\gtrsim \xi+\eta$. Then a similar argument to that of  $\tilde{T}^2_uP_\kappa f$ gives \eqref{eq:20.27} in this case. If $\gamma$ is an even function, it implies that $\Gamma_{r}$ is also an even function.  We  will consider the following two cases. Define
\begin{eqnarray*}
\left\{\aligned
\tilde{m}^1_-(u,\xi,\eta):=\chi(|\xi|/|\eta|)\Omega(u)\psi\left(2^{-\kappa }(\xi^2+\eta^2)^{\frac{1}{2}}\right)\left(\int_{-\infty}^{0} e^{-i(t\xi+u\Gamma_{r}(t)\eta)}\phi(t)\,\textrm{d}t\right)\mathbf{1}_{\mathbb{R}^+}(\xi)\mathbf{1}_{\mathbb{R}^+}(\eta),\\
\tilde{m}^1_+(u,\xi,\eta):=\chi(|\xi|/|\eta|)\Omega(u)\psi\left(2^{-\kappa }(\xi^2+\eta^2)^{\frac{1}{2}}\right)\left(\int_{0}^{\infty} e^{-i(t\xi+u\Gamma_{r}(t)\eta)}\phi(t)\,\textrm{d}t\right)\mathbf{1}_{\mathbb{R}^+}(\xi)\mathbf{1}_{\mathbb{R}^+}(\eta),
\endaligned\right.
\end{eqnarray*}
and the associated operators are $\tilde{T}_{u,-}^{1}P_\kappa$ and $\tilde{T}_{u,+}^{1}P_\kappa$, respectively.

For $\tilde{T}_{u,+}^{1}P_\kappa$, it is easy to see that $\Phi'_u(t)\gtrsim \xi+\eta$ in $\tilde{m}^1_+$. Therefore, as $\tilde{T}^2_uP_\kappa f$, we may also obtain
\begin{align*}
\Bigg(\int_{\frac{1}{2}}^{\frac{5}{2}} \left\|\tilde{T}_{u,+}^{1}P_\kappa f\right\|^p_{L^{p}(\mathbb{R}^{2})}\,\textrm{d}u\Bigg)^{\frac{1}{p}}\ls 2^{-\left(\delta+\frac{1}{p}\right) \kappa}\|f\|_{L^{p}(\mathbb{R}^{2})}\quad \textrm{for}~ \textrm{all} ~\kappa\in \mathbb{N}~ \textrm{and}~p\in (2,\infty).
\end{align*}

For $\tilde{T}_{u,-}^{1}P_\kappa$, let $\Phi'_u(t_0)=\xi+u\Gamma'_{r}(t_0)\eta=0 $, then $t_{0}<0$ is the critical point. Noting that $\gamma'$ is strictly monotonic on $\mathbb{R}^+$ with $\gamma'(\mathbb{R}^+)=\mathbb{R}^+$, we may write $t_{0}:=(\Gamma_{r}')^{-1}(-\xi/u\eta)$. By Taylor's formula, we have
\begin{align*}
\Phi_u(t+t_0)=t_{0}\xi+u\Gamma_{r}(t_{0})\eta+\frac{u \eta}{2}t^{2} \Gamma_{r}''(t_{0}) +\frac{u \eta }{2}t^{3}\int_{0}^{1}(1-\theta)^2\Gamma_{r}'''(\theta t+t_{0})\,\textrm{d}\theta.
\end{align*}
Denote $\tilde{\phi}(t):=\phi(t)\mathbf{1}_{(-\infty,0)}(t)$. One can  write
\begin{align*}
\tilde{m}^1_-(u,\xi,\eta)=e^{i\Phi(u,\xi,\eta)}c(u,\xi,\eta),
\end{align*}
where
\begin{align*}
\Phi(u,\xi,\eta):=-\Phi_u(t_0)=-t_{0}\xi-u\Gamma_{r}(t_{0})\eta;
\end{align*}
\begin{align*}
c(u,\xi,\eta):=\chi(|\xi|/|\eta|)\Omega(u)\psi\left(2^{-\kappa }(\xi^2+\eta^2)^{\frac{1}{2}}\right)a(u\eta,t_0)\mathbf{1}_{\mathbb{R}^+}(\xi)\mathbf{1}_{\mathbb{R}^+}(\eta)
\end{align*}
and
\begin{align*}
a(\lambda,t_0):=\int_{-\infty}^{\infty} e^{-i\left(\lambda \frac{ t^{2}}{2}\Gamma_{r}''(t_{0}) +\lambda \frac{t^{3}}{2}\int_{0}^{1}(1-\theta)^2\Gamma_{r}'''(\theta t+t_{0})\,\textrm{d}\theta\right)}\tilde{\phi}(t+t_0)\,\textrm{d}t.
\end{align*}
As a result,
\begin{align*}
\tilde{T}_{u,-}^{1}P_\kappa f(x_1,x_2)=\int_{-\infty}^{\infty} \int_{-\infty}^{\infty}  e^{ix_1\xi+ix_2\eta}e^{i\Phi(u,\xi,\eta)}c(u,\xi,\eta)\hat{f}(\xi,\eta)\,\textrm{d}\xi \,\textrm{d}\eta,
\end{align*}
and it suffices to prove that there exists a positive constant $\delta$, independent of $r$, such that
\begin{align}\label{eq:20.33}
\Bigg(\int_{\frac{1}{2}}^{\frac{5}{2}} \left\|\tilde{T}_{u,-}^{1}P_\kappa f\right\|^p_{L^{p}(\mathbb{R}^{2})}\,\textrm{d}u\Bigg)^{\frac{1}{p}}\ls 2^{-\left(\delta+\frac{1}{p}\right) \kappa}\|f\|_{L^{p}(\mathbb{R}^{2})}\quad \textrm{for}~ \textrm{all} ~\kappa\in \mathbb{N}~ \textrm{and}~p\in (2,\infty).
\end{align}

Let $\varphi:\ \mathbb{R}^2\rightarrow\mathbb{R}$ be a smooth function supported on $\{(x_1,x_2)\in \mathbb{R}^2:\ |(x_1,x_2)|\leq 2\}$ such that $\varphi(x_1,x_2)= 1$ on $\{(x_1,x_2)\in \mathbb{R}^2:\ |(x_1,x_2)|\leq 1\}$. We write
\begin{align*}
b(x_1,x_2,u,\xi,\eta):=\varphi(x_1,x_2)c(u,\xi,\eta)\quad \textrm{and} \quad\Psi(x_1,x_2,u,\xi,\eta):=x_1\xi+x_2\eta+\Phi(u,\xi,\eta).
\end{align*}
Define
\begin{align*}
Af(x_1,x_2,u):=\int_{-\infty}^{\infty} \int_{-\infty}^{\infty}  e^{i\Psi(x_1,x_2,u,\xi,\eta)}b(x_1,x_2,u,\xi,\eta)\hat{f}(\xi,\eta)\,\textrm{d}\xi \,\textrm{d}\eta.
\end{align*}
We first prove that there exists a positive constant $\delta$ independent of $r$ such that
\begin{align}\label{eq:20.34}
\|Af\|_{L^{p}(\mathbb{R}^{3})}\ls 2^{-\left(\delta+\frac{1}{p}\right) \kappa}\|f\|_{L^{p}(\mathbb{R}^{2})}\quad \textrm{for}~ \textrm{all} ~\kappa\in \mathbb{N}~ \textrm{and}~p\in (2,\infty).
\end{align}
Recall that $t_0(u,\xi,\eta)=(\Gamma_{r}')^{-1}(-\xi/u\eta)$.  By (v) of  Lemma \ref{lemma 2.1}, we have
\begin{align}\label{eq:20.35}
\left|\partial_{u}^{\alpha}\partial_{\xi}^{\beta}\partial_{\eta}^{\gamma}t_{0}(u,\xi,\eta)\right|\lesssim (1+|\xi|+|\eta|)^{-\beta-\gamma} \quad \textrm{for}~ \textrm{all} ~\alpha,\beta,\gamma\in \mathbb{N}~ \textrm{with}~\alpha+\beta+\gamma<N.
\end{align}
By Lemma \ref{lemma 2.3}, for $|\lambda|\gs 1$ and $t_{0}<0$ with $|t_0|\approx 1$, we obtain
\begin{align*}
\left|\partial_{\lambda}^{\alpha}\partial_{t_{0}}^{\beta}a(\lambda,t_{0})\right|\lesssim |\lambda|^{-\frac{1}{2}-\alpha} \quad \textrm{for}~ \textrm{all} ~\alpha,\beta\in \mathbb{N}~ \textrm{with}~\beta<N-2.
\end{align*}
This, combined with \eqref{eq:20.35}, yields
\begin{align}\label{eq:20.37}
\left|\partial_{u}^{\alpha}\partial_{\xi}^{\beta}\partial_{\eta}^{\gamma}c(u,\xi,\eta)\right|\lesssim (1+|\xi|+|\eta|)^{-\frac{1}{2}-\beta-\gamma} \quad \textrm{for}~ \textrm{all} ~\alpha,\beta,\gamma\in \mathbb{N}~ \textrm{with}~\alpha+\beta+\gamma<N,
\end{align}
which implies that
\begin{align}\label{eq:20.370}
\left|\partial_{x_1}^{\alpha_{1}}\partial_{x_2}^{\alpha_{2}}\partial_{u}^{\alpha_{3}}\partial_{\xi}^{\alpha_{4}}\partial_{\eta}^{\alpha_{5}}b(x_1,x_2,u,\xi,\eta)\right|\leq(1+|\xi|+|\eta|)^{-\frac{1}{2}-\alpha_{4}-\alpha_{5}}
\end{align}
for all $\alpha_{1},\cdots,\alpha_{5}\in \mathbb{N}$ with $\alpha_{3}+\alpha_{4}+\alpha_{5}<N$. It is easy to see that $\textrm{supp} ~b \subset \{(x_1,x_2)\in \mathbb{R}^2:\ |(x_1,x_2)|\leq 2\}\times(1/2,5/2)\times\{(\xi,\eta)\in \mathbb{R}^2:\ \xi\approx\eta\approx2^{k},\ \xi>0,\ \eta>0\}$.

\eqref{eq:20.37} implies the multiplier of $\tilde{T}_{u,-}^{1}P_\kappa$ can be bounded from above by $2^{-\kappa/2}$. We apply Plancherel's theorem to obtain
\begin{align*}
\Bigg(\int_{\frac{1}{2}}^{\frac{5}{2}}\left\|\tilde{T}_{u,-}^{1}P_\kappa f\right\|^2_{L^{2}(\mathbb{R}^{2})}\,\textrm{d}u\Bigg)^{\frac{1}{2}}\ls 2^{-\frac{\kappa}{2}}\|f\|_{L^{2}(\mathbb{R}^{2})}.
\end{align*}
By interpolation, it suffices to prove \eqref{eq:20.33} for $p\in [6,\infty)$. We will first prove \eqref{eq:20.34} for $p\in [6,\infty)$ by using  local smoothing estimates provided in \cite{B}, then prove   \eqref{eq:20.33}  by using  \eqref{eq:20.34} in the case $p\in [6,\infty)$.

For the convenience of writing, we denote $e_1:=(1,0), B(0,2):=\{(x_1,x_2)\in \mathbb{R}^2:\ |(x_1,x_2)|\leq 2\}$, $Z:=B(0,2)\times(1/2,5/2)$ and
$$\Gamma:=\left\{Q^{-1}(\xi,\eta)\in \mathbb{R}^2:\ 1/4\leq\xi^2+\eta^2\leq4, C^{(1)}_2/6e^{C^{(1)}_{3}}\leq |\xi|/|\eta|  \leq 6e^{C^{(1)}_{3}}C^{(1)}_3, \xi>0, \eta>0 \right\},$$
where $Q$ is the orthogonal matrix so that $Qe_{1}=(\sqrt{2}/ 2,\sqrt{2}/ 2)$. We write $\Phi_Q(u,\xi,\eta):=\Phi(u,Q(\xi,\eta))$, $b_Q(x_1,x_2,u,\xi,\eta):=b(x_1,x_2,u,Q(\xi,\eta))$ and $\Psi_Q(x_1,x_2,u,\xi,\eta):=\Psi(x_1,x_2,u,Q(\xi,\eta))$. For $\alpha,\beta,\gamma\in \mathbb{N}$ with $\alpha+\beta+\gamma<N$ and $\alpha_{1},\cdots,\alpha_{5}\in \mathbb{N}$ with $\alpha_{3}+\alpha_{4}+\alpha_{5}<N$, we now claim the following four terms hold:
\begin{align}\label{eq:20.38}
\left|\partial_{u}^{\alpha}\partial_{\xi}^{\beta}\partial_{\eta}^{\gamma}\Phi_Q(u,\xi,\eta)\right|\ls 1 \quad \textrm{on} ~ Z\times\Gamma;
\end{align}
\begin{align}\label{eq:20.380}
\left|\partial_{x_1}^{\alpha_{1}}\partial_{x_2}^{\alpha_{2}}\partial_{u}^{\alpha_{3}}\partial_{\xi}^{\alpha_{4}}\partial_{\eta}^{\alpha_{5}}b_Q(x_1,x_2,u,\xi,\eta)\right|\ls (1+|\xi|+|\eta|)^{-\frac{1}{2}-\alpha_{4}-\alpha_{5}} \quad \textrm{on} ~ Z\times\Gamma;
\end{align}
\begin{align}\label{eq:20.39}
\left(
\begin{array}{ccc}
\partial_{x_1 \xi} \Psi_Q(x_1,x_2,u,\xi,\eta)~& ~ \partial_{x_2 \xi}\Psi_Q(x_1,x_2,u,\xi,\eta)  \\
 \partial_{x_1 \eta} \Psi_Q(x_1,x_2,u,\xi,\eta)~ &~ \partial_{x_2 \eta} \Psi_Q(x_1,x_2,u,\xi,\eta)
\end{array}
\right)=Q^T \quad \textrm{on} ~ Z\times\Gamma;
\end{align}
\noindent and
\begin{align}\label{eq:20.40}
\left|\partial_{\eta}^{2}\partial_{u}\Psi_Q(x_1,x_2,u,\xi,\eta)\right|\gs 1 \quad \textrm{on}~ Z\times\Gamma,
\end{align}
where the bounds are independent of $r$.


Recall that $\Psi(x_1,x_2,u,\xi,\eta)=x_1\xi+x_2\eta+\Phi(u,\xi,\eta)$, $\Phi(u,\xi,\eta)=-t_{0}\xi-u\Gamma_{r}(t_{0})\eta$ and $t_{0}=(\Gamma_{r}')^{-1}(-\xi/u\eta)$. Then, \eqref{eq:20.38}  follows from \eqref{eq:20.35} and Lemma \ref{lemma 2.1}; \eqref{eq:20.380}  follows from \eqref{eq:20.370} and Lemma \ref{lemma 2.1}; \eqref{eq:20.39} is easy to check. We now turn to verify \eqref{eq:20.40}.  It suffices to verify
\begin{align*}
\left|\partial_{\xi}^{2}\partial_{u}\Phi(u,\xi,\eta)-2\partial_{\xi}\partial_{\eta}\partial_{u}\Phi(u,\xi,\eta)+\partial_{\eta}^{2}\partial_{u}\Phi(u,\xi,\eta)\right|\gs 1 \quad \textrm{on}~ Z\times Q\Gamma.
\end{align*}
Indeed, by noting that $\xi+u\Gamma_{r}'(t_{0})\eta=0$, we have the following results:
$$\partial_{u}t_{0}=\frac{\xi}{u^{2}\eta\Gamma_{r}''(t_0)}; \quad  \partial_{\xi}t_{0}=-\frac{1}{u\eta\Gamma_{r}''(t_0)}; \quad  \partial_{\eta}t_{0}=\frac{\xi}{u\eta^{2}\Gamma_{r}''(t_0)};$$
$$ \partial_{u}\Phi(u,\xi,\eta)=-\Gamma_{r}(t_0)\eta; \quad \partial_{\xi}\partial_{u}\Phi(u,\xi,\eta)=\frac{\Gamma_{r}'(t_0)}{u\Gamma_{r}''(t_0)}; \quad
\partial_{\eta}\partial_{u}\Phi(u,\xi,\eta)
=-\frac{\xi\Gamma_{r}'(t_0)}{u\eta\Gamma_{r}''(t_0)}-\Gamma_{r}(t_0); $$
$$\partial_{\xi}^{2}\partial_{u}\Phi(u,\xi,\eta)
=\frac{\Gamma_{r}'(t_0)\Gamma_{r}'''(t_0)-\Gamma_{r}''^{2}(t_0)}{u^{2}\eta\Gamma_{r}''(t_0)^{3}}; \quad \partial_{\xi}\partial_{\eta}\partial_{u}\Phi(u,\xi,\eta)
=-\frac{\Gamma_{r}'(t_0)\Gamma_{r}'''(t_0)-\Gamma_{r}''(t_0)^{2}}{u^{2}\eta^{2}\Gamma_{r}''(t_0)^{3}}\xi;\quad
$$
$$\partial_{\eta}^{2}\partial_{u}\Phi(u,\xi,\eta)
=\frac{\Gamma_{r}'\Gamma_{r}'''(t_0)-\Gamma_{r}''(t_0)^{2}}{u^{2}\eta^{3}\Gamma_{r}''(t_0)^{3}}\xi^{2}.$$
 Note that $\xi\approx\eta\approx1$ and $\xi>0$ and $\eta>0$ if $(\xi,\eta)\in Q\Gamma$. By Lemma \ref{lemma 2.1} and (i) of {\bf(H.)}, we then have
\begin{align*}
&\left|\partial_{\xi}^{2}\partial_{u}\Phi(u,\xi,\eta)-2\partial_{\xi}\partial_{\eta}\partial_{u}\Phi(u,\xi,\eta)+\partial_{\eta}^{2}\partial_{u}
\Phi(u,\xi,\eta)\right|\\
=&\left|\frac{\Gamma_{r}'(t_0)\Gamma_{r}'''(t_0)-\Gamma_{r}''(t_0)^{2}}{u^{2}\eta^{3}\Gamma_{r}''(t_0)^{3}}\right| |\xi+\eta|^{2}\gs 1\qquad \qquad \qquad \qquad \textrm{on}~ Z\times Q\Gamma.
\end{align*}

 Combining \eqref{eq:20.38}-\eqref{eq:20.40}, we can apply \cite[Proposition 3.2]{B} to obtain, for any $ p\in[6,\infty)$ and $\kappa\in \mathbb{N}$, there exists $\delta\in(0,1/p)$ independent of $r$ such that
\begin{align}\label{eq:20.41}
\|Af\|_{L^{p}(\mathbb{R}^{3})}= 2^{-\left(\delta+\frac{1}{p}\right) \kappa}\left\|2^{\left(\delta+\frac{1}{p}\right) \kappa}Af\right\|_{L^{p}(\mathbb{R}^{3})} \ls 2^{-\left(\delta+\frac{1}{p}\right) \kappa}\|f\|_{L^{p}(\mathbb{R}^{2})}
\end{align}
with a bound  independent of $r$. This completes the proof of \eqref{eq:20.34} for $p\in [6,\infty)$.

We now will  use \eqref{eq:20.34} to prove \eqref{eq:20.33} for any $p\in [6,\infty)$. Note that
$$\varphi(x_1,x_2) \tilde{T}_{u,-}^{1}P_{\kappa}f(x_1,x_2)=\int_{\mathbb{R}^4} e^{-ix\xi}e^{-iy\eta}e^{ix_1\xi}e^{ix_2\eta}e^{i\Phi(u,\xi,\eta)}\varphi(x_1,x_2)c(u,\xi,\eta)f(x,y)\,\textrm{d}x\,\textrm{d}y\,\textrm{d}\xi \,\textrm{d}\eta.$$
Denote $L_{\xi}f:=(1-\Delta_{\xi})(1+|x|^{2})^{-1}$ and $L_{\eta}f:=(1-\Delta_{\eta})(1+|y|^{2})^{-1}$. One can write
$$L_{\xi}L_{\eta}\left(e^{ix_1\xi}e^{ix_2\eta}e^{i\Phi(u,\xi,\eta)}\varphi(x_1,x_2)c(u,\xi,\eta)\right)=e^{ix_1\xi}e^{ix_2\eta}
e^{i\Phi(u,\xi,\eta)}\tilde{c}(x_1,x_2,u,\xi,\eta)(1+|x|^{2})^{-1}(1+|y|^{2})^{-1}$$ for some $\tilde{c}\in S^{-1/2}$ satisfying
$$ \left|\partial_{x_1}^{\alpha_{1}}\partial_{x_2}^{\alpha_{2}}\partial_{u}^{\alpha_{3}}\partial_{\xi}^{\alpha_{4}}\partial_{\eta}^{\alpha_{5}}\tilde{c}(x_1,x_2,u,\xi,\eta)\right|\ls (1+|\xi|+|\eta)^{-\frac{1}{2}-\alpha_{4}-\alpha_{5}}$$
for all $\alpha_{1},\cdots,\alpha_{5}\in \mathbb{N}$ with $\alpha_{3}+\alpha_{4}+\alpha_{5}<N$, where the constant is independent of $r$, and
$\textrm{supp}~\tilde{c}\subset B(0,2)\times(1/2,5/2)\times\{(\xi,\eta)\in \mathbb{R}^2:\ \xi\approx\eta\approx2^{k},\ \xi>0,\ \eta>0\}$.

Integration by parts shows that $\varphi(x_1,x_2) \tilde{T}_{u,-}^{1}P_{\kappa}f(x_1,x_2)$ can be written as
\begin{align*}
&\int_{\mathbb{R}^4} e^{-ix\xi}e^{-iy\eta}e^{ix_1\xi}e^{ix_2\eta}e^{i\Phi(u,\xi,\eta)}\tilde{c}(x_1,x_2,u,\xi,\eta)(1+|x|^{2})^{-1}(1+|y|^{2})^{-1}f(x,y)\,\textrm{d}x\,\textrm{d}y\,\textrm{d}\xi \,\textrm{d}\eta\\
=& \int_{\mathbb{R}^2}  e^{ix_1\xi}e^{ix_2\eta}e^{i\Phi(u,\xi,\eta)}\tilde{c}(x_1,x_2,u,\xi,\eta) \hat{g}(\xi,\eta)  \,\textrm{d}\xi \,\textrm{d}\eta,
\end{align*}
where $g(x,y):=f(x,y)(1+|x|^{2})^{-1}(1+|y|^{2})^{-1}$. As  in \eqref{eq:20.41}, we get
\begin{align*}
\left\|\varphi \tilde{T}_{u,-}^{1}P_{\kappa}f\right\|^p_{L^{p}(\mathbb{R}^{3})}= 2^{-\left(\delta+\frac{1}{p}\right)p \kappa} \left\|g\right\|^p_{L^{p}(\mathbb{R}^{3})} \ls 2^{-\left(\delta+\frac{1}{p}\right)p \kappa} \int_{\mathbb{R}^2} \frac{|f(x,y)|^p}{(1+|x|^{2}+|y|^{2})^p} \,\textrm{d}x\,\textrm{d}y.
\end{align*}
Then, because $\tilde{T}_{u,-}^{1}P_{\kappa}f(x_1+x_{0},x_2+y_{0})=\tilde{T}_{u,-}^{1}P_{\kappa}(f(\cdot+x_{0},\cdot+y_{0}))(x_1,x_2)$, we obtain
\begin{align*}
&\int_{\mathbb{R}^3}|\varphi(x_1+x_{0},x_2+y_{0}) \tilde{T}_{u,-}^{1}P_{\kappa}f(x_1,x_2,u)|^{p}\,\textrm{d}x_1\,\textrm{d}x_2\,\textrm{d}u\\
\ls~ &2^{-\left(\delta+\frac{1}{p}\right)p \kappa}\int_{\mathbb{R}^2} \frac{|f(x,y)|^{p}}{(1+|x-x_{0}|^2+|y-y_{0}|^2)^{p}}\,\textrm{d}x\,\textrm{d}y.
\end{align*}
By integration in $x_{0},y_{0}$, we have
\begin{align*}
\int_{\mathbb{R}^3}| \tilde{T}_{u,-}^{1}P_{\kappa}f(x_1,x_2,u)|^{p}\,\textrm{d}x_1\,\textrm{d}x_2\,\textrm{d}u
\lesssim 2^{-\left(\delta+\frac{1}{p}\right)p \kappa}\int_{\mathbb{R}^2} |f(x,y)|^{p}\,\textrm{d}x\,\textrm{d}y.
\end{align*}
We have proved  \eqref{eq:20.33} for any $p\in [6,\infty)$  and thus finish the proof of \eqref{eq:20.16}.

\bigskip

\noindent {\bf Proof of \eqref{eq:20.17}:} The proof of \eqref{eq:20.17} is very similar as that of \eqref{eq:20.16} with two slight modifications.  Firstly, we will  replace $\phi(t)$ in \eqref{eq:20.a} by $-i\Gamma_{r}(t)\phi(t)$. Note that the latter maybe belongs to $C^{N}$ on $\{t\in \mathbb{R}:\ 1/2\leq |t|\leq 2\}$ with $N\in\mathbb{N}$ large enough, but it doesn't affect our proof. Secondly, we write $\eta\psi(2^{-\kappa }(\xi^2+\eta^2)^{\frac{1}{2}})$ in \eqref{eq:20.b} as

 $$
 2^\kappa\times \left(2^{-\kappa }(\xi^2+\eta^2)^{\frac{1}{2}}\psi(2^{-\kappa }(\xi^2+\eta^2)^{\frac{1}{2}})\right)\times \left(\frac{\eta }{(\xi^2+\eta^2)^{\frac{1}{2}}}\right).
 $$
Note that $2^{-\kappa }(\xi^2+\eta^2)^{\frac{1}{2}}\psi(2^{-\kappa }(\xi^2+\eta^2)^{\frac{1}{2}})$ just like  $\psi(2^{-\kappa }(\xi^2+\eta^2)^{\frac{1}{2}})$, and $\eta /(\xi^2+\eta^2)^{\frac{1}{2}}$ is  a harmless quantity. By repeating the process of proving \eqref{eq:20.16}, we can control the LHS of \eqref{eq:20.17} by $2^{-\left(\delta+\frac{1}{p}\right) \kappa}\|f\|_{L^{p}(\mathbb{R}^{2})}$ times a extra factor  $2^{\kappa}$, which is just the RHS of  \eqref{eq:20.17}.

\bigskip

\section{Proof of Theorem A}

We will give the proof of Theorem A  in the following two subsections.
\subsection {Proof of (i) of Theorem A}

 The main strategy of our proof is to decompose the operator into  a sum of the low frequency part and high frequency part by defining a measurable function $l_z:\ \mathbb{R}^2\rightarrow\mathbb{R}$ in \eqref{eq:2.6}. Here,   replacing $U_z$ by $2^{V_z}$ is a useful observation. For the low frequency part, we bound it by the sum of the Hardy-Littlewood maximal operator and  maximal truncated Hilbert transform. For the high frequency part, we further split it into two parts by the Littlewood-Paley projection in the first variable. The first part can also be controlled by the Hardy-Littlewood maximal operator. For the second part, we prove it by a local smoothing estimate which has been obtained in Section 2.

We first assume that $U(x_1,x_2)>0$ for almost every $(x_1,x_2)\in \mathbb{R}^{2}$ and the other case $U(x_1,x_2)<0$ can be handled similarly. Let $V:\ \mathbb{R}^2\rightarrow  \mathbb{Z}$ be a measurable function satisfying
\begin{align}\label{eq:2.3}
2^{V(x_1,x_2)}\leq U(x_1,x_2)< 2^{V(x_1,x_2)+1}.
\end{align}
For any given $k_0\in \mathbb{Z}$, we define
\begin{align*}
U^{(k_0)}(x_1,x_2):=2^{k_0}\frac{U(x_1,x_2)}{2^{V(x_1,x_2)}}.
\end{align*}
It is easy to see that $U^{(V(x_1,x_2))}(x_1,x_2)=U(x_1,x_2)$. Recall that $\psi:\ \mathbb{R}\rightarrow\mathbb{R}$ is a smooth function supported on $\{t\in \mathbb{R}:\ 1/2\leq |t|\leq 2\}$ with the property that $0\leq \psi(t)\leq 1$ and $\Sigma_{k\in \mathbb{Z}} \psi_k(t)=1$ for any $t\neq 0$, where $\psi_k(t)=\psi (2^{-k}t)$. For any given $l\in \mathbb{Z}$,  set
\begin{align}\label{eq:2.40}
H^l_{U,\gamma}f(x_1,x_2):=\mathrm{p.\,v.}\int_{-\infty}^{\infty}f(x_1-t,x_2-U(x_1,x_2)\gamma(t))\psi_l(t)\,\frac{\textrm{d}t}{t}.
\end{align}
Then, we can write
\begin{align*}
H_{U,\gamma}P^{(2)}_kf=\sum_{l\in \mathbb{Z}}H^l_{U,\gamma}P^{(2)}_kf.
\end{align*}

Furthermore, for simplicity, let $z:=(x_1,x_2)$, $U_z:=U(x_1,x_2)$ and $V_z:=V(x_1,x_2)$. Note that $\gamma$ is strictly increasing on $\mathbb{R}^+$ with $\gamma(\mathbb{R}^+)=\mathbb{R}^+$ from (ii) of {\bf(H.)} and \eqref{eq:1.5}. For these $k\in \mathbb{Z}$ and $V_z\in \mathbb{Z}$,  we can denote $l_z:\ \mathbb{R}^2\rightarrow\mathbb{R}$ be a measurable function satisfying
\begin{align}\label{eq:2.6}
2^{k}2^{V_z}\gamma(2^{l_z})=1.
\end{align}
We then further split $H_{U,\gamma}P^{(2)}_kf$ into the following low frequency part $H^{I}_{U,\gamma}P^{(2)}_kf$ and  the high frequency part $H^{II}_{U,\gamma}P^{(2)}_kf$, where
\begin{align*}
H^{I}_{U,\gamma}P^{(2)}_kf:=\sum_{l\leq l_z}H^l_{U,\gamma}P^{(2)}_kf \quad \textrm{and} \quad H^{II}_{U,\gamma}P^{(2)}_kf:=\sum_{l> l_z}H^l_{U,\gamma}P^{(2)}_kf.
\end{align*}

\textbf{Consider $H^{I}_{U,\gamma}P^{(2)}_kf$.} We compare it with the following operator $\mathbb{H}^{I}_{U,\gamma}P^{(2)}_kf$,
\begin{align*}
\mathbb{H}^{I}_{U,\gamma}P^{(2)}_kf(x_1,x_2)
:=\mathrm{p.\,v.}\int_{-\infty}^{\infty}P^{(2)}_kf(x_1-t,x_2)\phi_{l_z}(t)\,\frac{\textrm{d}t}{t},
\end{align*}
where  $\phi_{l_z}(t):=\sum_{l\leq l_z}\psi_l(t)$.
We can bound $|\mathbb{H}^{I}_{U,\gamma}P^{(2)}_kf|$ by
\begin{align*}
&\left|\mathrm{p.\,v.}\int_{-\infty}^{\infty}P^{(2)}_kf(x_1-t,x_2)(\phi_{l_z}(t)-1)\,\frac{\textrm{d}t}{t}\right|+\left|\mathrm{p.\,v.}\int_{-\infty}^{\infty}P^{(2)}_kf(x_1-t,x_2)\,\frac{\textrm{d}t}{t}\right|\\
\lesssim~ & M^{(1)}P^{(2)}_kf(x_1,x_2)+H^{*(1)}P^{(2)}_kf(x_1,x_2).
\end{align*}
Here and hereafter, $M^{(j)}$, $j=1,2$, denotes the \emph{Hardy-Littlewood maximal operator applied in the j-th variable}.  $H^{*(1)}$ denotes the \emph{maximal truncated Hilbert transform applied in the first variable}. Since both $M^{(1)}$ and $H^{*(1)}$ are known to be bounded on $L^p(\mathbb{R}^2)$, we may conclude that
\begin{align}\label{eq:2.7}
\left\|\mathbb{H}^{I}_{U,\gamma}P^{(2)}_kf\right\|_{L^{p}(\mathbb{R}^{2})}\ls \left\|P^{(2)}_kf\right\|_{L^{p}(\mathbb{R}^{2})}\quad \textrm{uniformly in}~k\in \mathbb{Z}~\textrm{for}~ \textrm{all}~p\in (1,\infty).
\end{align}

The difference between $H^{I}_{U,\gamma}P^{(2)}_kf(x_1,x_2)$ and $\mathbb{H}^{I}_{U,\gamma}P^{(2)}_kf(x_1,x_2)$ can be written as
\begin{align}\label{eq:2.8}
&\int_{-\infty}^{\infty}\left[\int_{0}^{U_z\gamma(t)}\partial_s\left(  P^{(2)}_kf(x_1-t,x_2-s)   \right)\,\textrm{d}s\right] \, \phi_{l_z}(t)\,\frac{\textrm{d}t}{t}\\
=&-\int_{-\infty}^{\infty}\left[\dashint_{0}^{U_z \gamma(t)} \bar{P}^{(2)}_kf(x_1-t,x_2-s) \,\textrm{d}s\right] 2^k U_z \gamma(t)   \phi_{l_z}(t)\,\frac{\textrm{d}t}{t},\nonumber
\end{align}
where $\bar{P}^{(2)}_kf$ denotes the \emph{Littlewood-Paley projection in the second variable corresponding to $((\cdot)\psi(\cdot))_k$}. From \eqref{eq:2.3} and \eqref{eq:2.6}, we have $2^k U_z\ls 1/\gamma(2^{l_z})$. Noticing \eqref{eq:1.5} implies $|\gamma(2^{l_z-j})/ \gamma(2^{l_z})|\ls e^{-C^{(1)}_{2}j/2}$, we can bound the last expression in \eqref{eq:2.8} by
\begin{align*}
\sum_{j\in \mathbb{N}}\dashint_{|t|\approx 2^{l_z-j}}\dashint_{0}^{U_z \gamma(t)} \left|\bar{P}^{(2)}_kf(x_1-t,x_2-s)\right| \,\textrm{d}s \left| \frac{\gamma(2^{l_z-j})}{\gamma(2^{l_z})}  \right| \,\textrm{d}t\ls M^{(1)}M^{(2)}\bar{P}^{(2)}_kf(x_1,x_2).
\end{align*}
From the $L^p(\mathbb{R})$-boundedness of $M^{(1)}$ and $M^{(2)}$, we have
\begin{align}\label{eq:2.9}
\left\|H^{I}_{U,\gamma}P^{(2)}_kf-\mathbb{H}^{I}_{U,\gamma}P^{(2)}_kf\right\|_{L^{p}(\mathbb{R}^{2})}\ls \left\|f\right\|_{L^{p}(\mathbb{R}^{2})}\quad \textrm{uniformly in}~k\in \mathbb{Z}~\textrm{for}~ \textrm{all}~p\in (1,\infty).
\end{align}
By replacing $f$ in \eqref{eq:2.9} with $P^{(2)}_kf$ and the fact that $P^{(2)}_kf=P^{(2)}_kP^{(2)}_kf$ essentially, we obtain the single annulus $L^p(\mathbb{R}^2)$-boundedness of $H^{I}_{U,\gamma}-\mathbb{H}^{I}_{U,\gamma}$. This, combined with \eqref{eq:2.7}, leads to
\begin{align}\label{eq:2.10}
\left\|H^{I}_{U,\gamma}P^{(2)}_kf\right\|_{L^{p}(\mathbb{R}^{2})}\ls \left\|P^{(2)}_kf\right\|_{L^{p}(\mathbb{R}^{2})}\quad \textrm{uniformly in}~k\in \mathbb{Z}~\textrm{for}~ \textrm{all}~p\in (1,\infty).
\end{align}

\textbf{Consider $H^{II}_{U,\gamma}P^{(2)}_kf$.} We rewrite it as $\sum_{l> 0}H^{l+l_z}_{U,\gamma}P^{(2)}_kf$. Hence, for $l\in \mathbb{N}$, it is enough to show that there exists a positive constant $\delta$ such that
\begin{align}\label{eq:2.11}
\left\|H^{l+l_z}_{U,\gamma}P^{(2)}_kf\right\|_{L^{p}(\mathbb{R}^{2})}\ls 2^{-\delta l}\|f\|_{L^{p}(\mathbb{R}^{2})}\quad  \textrm{for}~ \textrm{all}~l\in \mathbb{N}~\textrm{and}~p\in (2,\infty).
\end{align}
We split
\begin{align}\label{eq:2.12}
H^{l+l_z}_{U,\gamma}P^{(2)}_kf=\sum_{j\leq 0}H^{l+l_z}_{U,\gamma}P^{(1)}_{j-l_z-l}P^{(2)}_kf+\sum_{j\geq 1}H^{l+l_z}_{U,\gamma}P^{(1)}_{j-l_z-l}P^{(2)}_kf
=:H_1P^{(2)}_kf+H_2P^{(2)}_kf.
\end{align}

\medskip

\textbf{Consider $H_1P^{(2)}_kf$.} Let $Q^{(1)}_{-l_z-l}:=\sum_{j\leq 0}P^{(1)}_{j-l_z-l}$, then we bound
\begin{align}\label{n3.1}
|H_1P^{(2)}_kf|\leq \sup_{k_0\in \mathbb{Z}}\sup_{u\in[1,2)} \left| H^{l+l_0}_{2^{k_0}u,\gamma}Q^{(1)}_{-l_0-l}P^{(2)}_kf\right|\quad  \textrm{ with}~ 2^{k}2^{k_0}\gamma(2^{l_0})=1.
\end{align}
The multiplier of $H^{l+l_0}_{2^{k_0}u,\gamma}Q^{(1)}_{-l_0-l}P^{(2)}_kf$ is equal to
\begin{align*}
 \int_{-\infty}^{\infty}e^{-i\xi 2^{l+l_0}t-i\eta2^{k_0}u\gamma(2^{l+l_0}t) }\psi(t)\,\frac{\textrm{d}t}{t}\cdot\phi_{-l_0-l}(\xi)\cdot\psi_k(\eta),
\end{align*}
 where  $\phi_{-l_0-l}(\cdot):=\sum_{j\leq 0}\psi_{j-l_0-l}(\cdot)$.
Integration by parts about $e^{-i\eta2^{k_0}u\gamma(2^{l+l_0}t)}$ shows that we can further write it as the sum of the following two parts:
\begin{eqnarray*}
\left\{\aligned
\frac{1}{2^k2^{k_0}u \gamma(2^{l_0+l})}\int_{-\infty}^{\infty}e^{-i\xi 2^{l+l_0}t-i\eta2^{k_0}u\gamma(2^{l+l_0}t) }   \frac{\gamma(2^{l_0+l})\psi(t)}{ \gamma'(2^{l_0+l}t)2^{l_0+l}t}\,\textrm{d}t\cdot 2^{l+l_0}\xi \phi_{-l_0-l}(\xi)\cdot \frac{\psi_k(\eta)}{2^{-k}\eta};\\
\frac{i}{2^k2^{k_0}u \gamma(2^{l_0+l})}\int_{-\infty}^{\infty}e^{-i\xi 2^{l+l_0}t-i\eta2^{k_0}u\gamma(2^{l+l_0}t) }  \left(\frac{\psi(\cdot)\gamma(2^{l_0+l}) }{\gamma'(2^{l_0+l}\cdot)2^{l_0+l}\cdot  }\right)'(t)\,\textrm{d}t\cdot \phi_{-l_0-l}(\xi)\cdot \frac{\psi_k(\eta)}{2^{-k}\eta}.
\endaligned\right.
\end{eqnarray*}
We will only consider the first term above, since  the second term above can be handled similarly by noticing that $|t^2\gamma''(t)/ \gamma(t)|\leq C^{(2)}_3$ for any $t>0$.  With abusing notions, we write $H^{l+l_0}_{2^{k_0}u,\gamma}Q^{(1)}_{-l_0-l}P^{(2)}_kf(x_1,x_2)$ as
\begin{align*}
\frac{1}{2^k2^{k_0}u \gamma(2^{l_0+l})}\ \mathrm{p.\,v.}\int_{-\infty}^{\infty}\tilde{Q}^{(1)}_{-l_0-l}\tilde{P}^{(2)}_kf(x_1-t,x_2-2^{k_0}u\gamma(t))\frac{\gamma(2^{l_0+l})\psi_{l_0+l}(t)}{2^{l+l_0} \gamma'(t)}\,\frac{\textrm{d}t}{t},
\end{align*}
where\footnote{We denote $\mathcal{F}(f)$ means the \emph{Fourier transform} of $f$.} $\mathcal{F}(\tilde{Q}^{(1)}_{-l_0-l}f)(\xi,\eta):=\sum_{j\leq0}2^j\tilde{\psi}(2^{-j+l_0+l}\xi)\hat{f}(\xi,\eta)$ with $\tilde{\psi}(\cdot):=\cdot\psi(\cdot)$, and
$\mathcal{F}(\tilde{P}^{(2)}_kf)(\xi,\eta):= \frac{\psi_k(\eta)}{2^{-k}\eta}\hat{f}(\xi,\eta)$. Let $\mathbb{H}^{l+l_0}_{2^{k_0}u,\gamma}Q^{(1)}_{-l_0-l}P^{(2)}_kf(x_1,x_2)$ be
\begin{align}\label{eq:2.012}
\frac{1}{2^k2^{k_0}u \gamma(2^{l_0+l})}\ \mathrm{p.\,v.}\int_{-\infty}^{\infty}\tilde{Q}^{(1)}_{-l_0-l}\tilde{P}^{(2)}_kf(x_1,x_2-2^{k_0}u\gamma(t))\frac{\gamma(2^{l_0+l})\psi_{l_0+l}(t)}{2^{l+l_0} \gamma'(t)}\,\frac{\textrm{d}t}{t}.
\end{align}
 After changing of variable $2^{k_0}u\gamma(t)=:w$, we apply \eqref{eq:1.5}, (ii) of {\bf(H.)} and the fact $(\gamma^{-1})'(t)\gamma'(\gamma^{-1}(t))=1$ to obtain
\begin{align*}
 \frac{1}{\gamma'\left(\gamma^{-1}\left(w/2^{k_0}u \right)\right)}=
 \frac{\gamma\left(\gamma^{-1}\left(w/2^{k_0}u\right)\right)}{\gamma'\left(\gamma^{-1}\left(w/2^{k_0}u\right)\right)\gamma^{-1}\left(w/2^{k_0}u\right)}
 \frac{\gamma^{-1}\left(w/2^{k_0}u\right)}{w/2^{k_0}u}\ls\frac{2^{l_0+1}}{\gamma(2^{l_0+1})}.
\end{align*}
On the other hand,  note that $2^{k}2^{k_0}\gamma(2^{l_0})=1$ and \eqref{eq:1.5} imply $1/2^k2^{k_0}u \gamma(2^{l_0+l}) \ls e^{-C^{(1)}_{2}l/2}$. Then we have
$$
\left|\mathbb{H}^{l+l_0}_{2^{k_0}u,\gamma}Q^{(1)}_{-l_0-l}P^{(2)}_kf(x_1,x_2)\right|
\ls e^{-C^{(1)}_{2}l/2}M^{(2)}Q^{(1)}_{-l_0-l}P^{(2)}_kf(x_1,x_2)
\ls e^{-C^{(1)}_{2}l/2}M^{(2)}M^{(1)}M^{(2)}f(x_1,x_2).
$$
Therefore, the same boundedness also holds for
 $\sup_{k_0\in \mathbb{Z}}\sup_{u\in[1,2)} |\mathbb{H}^{l+l_0}_{2^{k_0}u,\gamma}Q^{(1)}_{-l_0-l}P^{(2)}_kf|,$
which trivially yields the estimate
\begin{align}\label{eq:2.13}
\left\|\sup_{k_0\in \mathbb{Z}}\sup_{u\in[1,2)}\left|\mathbb{H}^{l+l_0}_{2^{k_0}u,\gamma}Q^{(1)}_{-l_0-l}P^{(2)}_kf\right|\right\|_{L^{p}(\mathbb{R}^{2})}\ls e^{-C^{(1)}_{2}l/2}\|f\|_{L^{p}(\mathbb{R}^{2})}\quad  \textrm{for}~ \textrm{all}~p\in (1,\infty).
\end{align}

The difference between $H^{l+l_0}_{2^{k_0}u,\gamma}Q^{(1)}_{-l_0-l}P^{(2)}_kf(x_1,x_2)$ and $\mathbb{H}^{l+l_0}_{2^{k_0}u,\gamma}Q^{(1)}_{-l_0-l}P^{(2)}_kf(x_1,x_2)$ can be written as\footnote{We denote $\check{\tilde{\psi}}_{j-l_0-l}$ means the \emph{inverse Fourier transform} of $\tilde{\psi}_{j-l_0-l}$, i.e., $\check{\tilde{\psi}}_{j-l_0-l}(\cdot)=2^{j-l_0-l}\check{\tilde{\psi}}(2^{j-l_0-l}\cdot)$.}
\begin{align}\label{eq:2.14}
\sum_{j\leq0}\frac{2^j}{2^k2^{k_0}u \gamma(2^{l_0+l})}\int_{-\infty}^{\infty}
\Bigg[\int_{-\infty}^{\infty}&\tilde{P}^{(2)}_kf(x_1-w,x_2-2^{k_0}u\gamma(t))\\
\times &\left(\check{\tilde{\psi}}_{j-l_0-l}(w-t)-\check{\tilde{\psi}}_{j-l_0-l}(w)\right) \,\textrm{d}w\Bigg] \frac{\gamma(2^{l_0+l})\psi_{l_0+l}(t)}{2^{l+l_0} \gamma'(t)}\,\frac{\textrm{d}t}{t}.\nonumber
\end{align}
The mean value theorem gives
\begin{align}\label{eq:2.15}
\left| \check{\tilde{\psi}}_{j-l_0-l}(w-t)- \check{\tilde{\psi}}_{j-l_0-l}(w) \right|\lesssim |t|2^{2(j-l_0-l)}2^{-2m}
\end{align}
if $|t|\leq 2^{-j+l_0+l}$ and  $2^{-j+l_0+l+m-1}\leq |w|\leq 2^{-j+l_0+l+m}$ for $m\in \mathbb{N}$. For $m=0$ the above estimate holds for all $|w|\leq 2^{-j+l_0+l}$. Since $t\in \textrm{supp}~ \psi_{l_0+l} $ and $j\leq0$ imply  $|t|\leq 2^{-j+l_0+l}$,  thus  \eqref{eq:2.15} holds. Therefore the absolute value of \eqref{eq:2.14} can be controlled by
\begin{align*}
\sum_{m\in \mathbb{N}}\sum_{j\leq0}\frac{2^j}{2^k2^{k_0}u \gamma(2^{l_0+l})}\int_{-\infty}^{\infty}\Bigg[\int_{|w|\leq 2^{-j+l_0+l+m}}&\left|\tilde{P}^{(2)}_kf(x_1-w,x_2-2^{k_0}u\gamma(t))\right|\\
\times~ &|t|2^{2(j-l_0-l)}2^{-2m} \,\textrm{d}w\Bigg] \left|\frac{\gamma(2^{l_0+l})\psi_{l_0+l}(t)}{2^{l+l_0} \gamma'(t)}\right|\,\frac{\textrm{d}t}{|t|}.\nonumber
\end{align*}
By changing of variable and noticing that $2^{k}2^{k_0}\gamma(2^{l_0})=1$,  we apply \eqref{eq:1.5} and (ii) of {\bf(H.)} to bound the absolute value of \eqref{eq:2.14} by
\begin{align*}
\sum_{m\in \mathbb{N}}\sum_{j\leq0}\frac{2^{2j-m}}{2^k2^{k_0}u \gamma(2^{l_0+l})}M^{(1)}M^{(2)}\tilde{P}^{(2)}_kf(x_1,x_2)\ls e^{-C^{(1)}_{2}l/2}M^{(1)}M^{(2)}\tilde{P}^{(2)}_kf(x_1,x_2).
\end{align*}
Thus,  by the $L^p(\mathbb{R})$-boundedness of $M^{(1)}$, $M^{(2)}$ and $\tilde{P}^{(2)}_k$, we further obtain
\begin{align*}
\left\|\sup_{k_0\in \mathbb{Z}}\sup_{u\in[1,2)} \left|H^{l+l_0}_{2^{k_0}u,\gamma}Q^{(1)}_{-l_0-l}P^{(2)}_kf-\mathbb{H}^{l+l_0}_{2^{k_0}u,\gamma}
Q^{(1)}_{-l_0-l}P^{(2)}_kf\right|\right\|_{L^{p}(\mathbb{R}^{2})}\ls e^{-C^{(1)}_{2}l/2}\|f\|_{L^{p}(\mathbb{R}^{2})}\quad  \textrm{for}~ \textrm{all}~p\in (1,\infty).
\end{align*}
This, combined with \eqref{eq:2.13}, gives
\begin{align*}
\left\|\sup_{k_0\in \mathbb{Z}}\sup_{u\in[1,2)}\left|H^{l+l_0}_{2^{k_0}u,\gamma}Q^{(1)}_{-l_0-l}P^{(2)}_kf\right|\right\|_{L^{p}(\mathbb{R}^{2})}\ls e^{-C^{(1)}_{2}l/2}\|f\|_{L^{p}(\mathbb{R}^{2})}\quad  \textrm{for}~ \textrm{all}~p\in (1,\infty).
\end{align*}
By \eqref{n3.1}, we conclude  that
\begin{align}\label{eq:2.17}
\left\|H_1P^{(2)}_kf\right\|_{L^{p}(\mathbb{R}^{2})}\ls e^{-C^{(1)}_{2}l/2}\|f\|_{L^{p}(\mathbb{R}^{2})}\quad  \textrm{for}~ \textrm{all}~l\in \mathbb{N}~\textrm{and}~p\in (1,\infty).
\end{align}

\medskip

\textbf{Consider $H_2P^{(2)}_kf$.} It suffices to prove that there exists  $\delta>0$ such that
\begin{align}\label{eq:2.18}
\left\|H^{l+l_z}_{U,\gamma}P^{(1)}_{j-l_z-l}P^{(2)}_kf\right\|_{L^{p}(\mathbb{R}^{2})}\ls 2^{-\delta (j+l)}\|f\|_{L^{p}(\mathbb{R}^{2})}\quad  \textrm{for}~ \textrm{all}~p\in (2,\infty).
\end{align}
The expression inside the $L^{p}(\mathbb{R}^{2})$ norm on the LHS of \eqref{eq:2.18} can be majorized by
\begin{align*}
\sup_{k_0\in \mathbb{Z}}\left| H^{l+l_0}_{2^{k_0}U^{(0)},\gamma}P^{(1)}_{j-l_0-l}P^{(2)}_kf \right|\leq \Bigg(\sum_{k_0\in\mathbb{Z} }\left| H^{l+l_0}_{2^{k_0}U^{(0)},\gamma}P^{(1)}_{j-l_0-l}P^{(2)}_kf \right|^p\Bigg)^{\frac{1}{p}}\quad  \textrm{ with}~ 2^{k}2^{k_0}\gamma(2^{l_0})=1.
\end{align*}
Noting that the commutation relation of  $l^p$  and $L^p$ norms, and  the $l^p$ norm can be controlled by the $l^2$ norm for any $p\in(2,\infty)$, we will only  prove
\begin{align}\label{n3.2}
\left\| H^{l+l_0}_{2^{k_0}U^{(0)},\gamma}P^{(1)}_{j-l_0-l}P^{(2)}_kf \right\|_{L^{p}(\mathbb{R}^{2})}\ls 2^{-\delta (j+l)}\|f\|_{L^{p}(\mathbb{R}^{2})}\quad  \textrm{for}~ \textrm{all}~p\in (2,\infty).
\end{align}

To get \eqref{n3.2}, we will use the local smoothing estimate (Proposition \ref{proposition 2.1}).   Indeed,  by denoting $\Gamma_{l+l_0}(t):=\gamma(2^{l+l_0}t)/\gamma(2^{l+l_0})$ and $\phi(t):=\psi(t)/t$, we can rewrite $H^{l+l_0}_{2^{k_0}U^{(0)},\gamma}P^{(1)}_{j-l_0-l}P^{(2)}_kf(x_1,x_2)$  as
\begin{align*}
\int_{-\infty}^{\infty}P^{(1)}_{j-l_0-l}P^{(2)}_kf\left(x_1-2^{l+l_0}t,x_2-2^{k_0}\gamma(2^{l+l_0}) U^{(0)}(x_1,x_2)\Gamma_{l+l_0}(t)\right)\phi(t)\,\textrm{d}t.
\end{align*}
The reason of replacing $\gamma(2^{l+l_0}t)$ by $\Gamma_{l+l_0}(t)$ is that the main properties of   $\Gamma_{l+l_0}(t)$ is independent of $l$ and $l_0$ from Lemma \ref{lemma 2.1}. Let
\begin{align}\label{eq:2.20}
T^{l+l_0}f(x_1,x_2):=\int_{-\infty}^{\infty}f\left(x_1-t,x_2- U^{(0)}(x_1,x_2)\Gamma_{l+l_0}(t)\right)\phi(t)\,\textrm{d}t,
\end{align}
and for any given real numbers $a,b$, define
\begin{align*}
\Omega_{a,b}f(x_1,x_2):=f(2^ax_1,2^bx_2).
\end{align*}
Then
\begin{align*}
H^{l+l_0}_{2^{k_0}U^{(0)},\gamma}P^{(1)}_{j-l_0-l}P^{(2)}_kf=\Omega_{-l-l_0,-k_0-\log_2 \gamma(2^{l+l_0})}T^{l+l_0}\Omega_{l+l_0,k_0+\log_2 \gamma(2^{l+l_0})}P^{(1)}_{j-l_0-l}P^{(2)}_kf.
\end{align*}
Note that
\begin{align*}
\Omega_{l+l_0,k_0+\log_2 \gamma(2^{l+l_0})}P^{(1)}_{j-l_0-l}P^{(2)}_kf=P^{(1)}_{j}P^{(2)}_{k+k_0+\log_2 \gamma(2^{l+l_0})}\Omega_{l+l_0,k_0+\log_2 \gamma(2^{l+l_0})}f,
\end{align*}
so we can write
\begin{align*}
H^{l+l_0}_{2^{k_0}U^{(0)},\gamma}P^{(1)}_{j-l_0-l}P^{(2)}_kf=\Omega_{-l-l_0,-k_0-\log_2 \gamma(2^{l+l_0})}T^{l+l_0}P^{(1)}_{j}P^{(2)}_{k+k_0+\log_2 \gamma(2^{l+l_0})}\Omega_{l+l_0,k_0+\log_2 \gamma(2^{l+l_0})}f,
\end{align*}
which further gives
\begin{align}\label{eq:2.21}
\left\| H^{l+l_0}_{2^{k_0}U^{(0)},\gamma}P^{(1)}_{j-l_0-l}P^{(2)}_kf \right\|_{L^{p}(\mathbb{R}^{2})}=\left\| T^{l+l_0}P^{(1)}_{j}P^{(2)}_{k+k_0+\log_2 \gamma(2^{l+l_0})}f \right\|_{L^{p}(\mathbb{R}^{2})}.
\end{align}
Note that the frequency support of $P^{(1)}_{j}P^{(2)}_{k+k_0+\log_2 \gamma(2^{l+l_0})}f$ is contained in the annulus $\{(\xi,\eta)\in\mathbb{R}^{2}:\ \sqrt{\xi^2+\eta^2}\approx 2^{\max\{j,k+k_0+\log_2 \gamma(2^{l+l_0})\}}\}$.  We use  $2^{k}2^{k_0}\gamma(2^{l_0})=1$, (i) of {\bf(H.)} and  Proposition \ref{proposition 2.1} to obtain  that  there exists  $\delta>0$ such that
\begin{align*}
\left\|T^{l+l_0}P^{(1)}_{j}P^{(2)}_{k+k_0+\log_2 \gamma(2^{l+l_0})}f\right\|_{L^{p}(\mathbb{R}^{2})}\ls 2^{-\delta \max\{j,k+k_0+\log_2 \gamma(2^{l+l_0})\}}\|f\|_{L^{p}(\mathbb{R}^{2})}\ls 2^{-\frac{\delta j}{2}} e^{-C^{(1)}_{2}\delta l/4}\|f\|_{L^{p}(\mathbb{R}^{2})}
\end{align*}
for all $p\in (2,\infty)$. Therefore,
\begin{align*}
\left\|H_2P^{(2)}_kf\right\|_{L^{p}(\mathbb{R}^{2})}\ls e^{-C^{(1)}_{2}\delta l/4}\|f\|_{L^{p}(\mathbb{R}^{2})}\quad  \textrm{for}~ \textrm{all}~l\in \mathbb{N}~\textrm{and}~p\in (2,\infty).
\end{align*}
This, combined with \eqref{eq:2.17}, yields \eqref{eq:2.11}. Therefore, we finish the proof of (i) of  Theorem A.

\subsection{Proof of (ii) of Theorem A}

Since  the operator $M_{U,\gamma}$ that we are dealing is  positive, we may assume that $f$ is non-negative. Furthermore, we may assume that $U(x_1,x_2)>0$ for all $(x_1,x_2)\in \mathbb{R}^{2}$ and also adopt the notation
\begin{align*}
U^{(k_0)}(x_1,x_2)=2^{k_0}\frac{U(x_1,x_2)}{2^{V(x_1,x_2)}} \quad \textrm{for}~k_0\in \mathbb{Z},
\end{align*}
where $V:\ \mathbb{R}^2\rightarrow  \mathbb{Z}$ is a measurable function satisfying $2^{V(x_1,x_2)}\leq U(x_1,x_2)< 2^{V(x_1,x_2)+1}$. Recall that $\psi:\ \mathbb{R}\rightarrow\mathbb{R}$ is a smooth function supported on $\{t\in \mathbb{R}:\ 1/2\leq |t|\leq 2\}$ with the property that $0\leq \psi(t)\leq 1$ and $\Sigma_{k\in \mathbb{Z}} \psi_k(t)=1$ for any $t\neq 0$, where $\psi_k(t)=\psi (2^{-k}t)$.  Hence, $M_{U,\gamma}$ can be bounded by
\begin{align*}
   &\sup_{\varepsilon>0}\frac{1}{2\varepsilon} \sum_{l\in\mathbb{Z}:\ 2^l\leq \varepsilon}2^l \int_{-\infty}^{\infty}f(x_1-t,x_2-U(x_1,x_2)\gamma(t))\psi_l(t) \,\frac{\textrm{d}t}{|t|} \\
  \lesssim &  \sup_{l\in\mathbb{Z}} \int_{-\infty}^{\infty}f(x_1-t,x_2-U(x_1,x_2)\gamma(t))\psi_l(t) \,\frac{\textrm{d}t}{|t|}.
\end{align*}
For any $u>0$ and  $l\in\mathbb{Z}$, let
\begin{align}\label{eq:3.0}
   S_{u,l}f(x_1,x_2):= \int_{-\infty}^{\infty}f(x_1-t,x_2-u\gamma(t))\psi_l(t) \,\frac{\textrm{d}t}{|t|}.
\end{align}
As before, for simplify, we denote $z:=(x_1,x_2)$, $U_z:=U(x_1,x_2)$ and $V_z:=V(x_1,x_2)$. Then, by linearization, the $L^{p}(\mathbb{R}^{2})$-boundedness of $M_{U,\gamma}$ can be reduced to that of $S_{U_z,l_z}$, where $l_z:=l(x_1,x_2)$ and $l_z:\ \mathbb{R}^2\rightarrow\mathbb{Z}$ is a measurable function.

For these $V_z,l_z\in \mathbb{Z}$,  we denote $k_z:\ \mathbb{R}^2\rightarrow\mathbb{R}$ be a measurable function satisfying
\begin{align*}
2^{k_z}2^{V_z}\gamma(2^{l_z})=1,
\end{align*}
 then split $S_{U_z,l_z}f(z)$ as
\begin{align*}
S_{U_z,l_z}f(z)=\sum_{k\in\mathbb{Z} }S_{U_z,l_z}P^{(2)}_kf(z)=&\sum_{k\leq k_z}S_{U_z,l_z}P^{(2)}_kf(z)+\sum_{k> k_z}S_{U_z,l_z}P^{(2)}_kf(z)\\
=:&S^a_{U_z,l_z}f(z)+S^b_{U_z,l_z}f(z).
\end{align*}

\medskip

\textbf{Consider $S^a_{U_z,l_z}f(z)$.} Let us set
\begin{align*}
\mathbb{S}^a_{U_z,l_z}f(z):=\sum_{k\leq k_z}\int_{-\infty}^{\infty}P^{(2)}_kf(x_1-t,x_2)\psi_{l_z}(t) \,\frac{\textrm{d}t}{|t|},
\end{align*}
which can be majorized by $M^{(1)}(\sum_{k\leq k_z} P^{(2)}_kf)(z)$. Note that the operator $\sum_{k\leq k_z} P^{(2)}_kf$ is bounded on $L^p(\mathbb{R}^2)$ for any given $p\in (1,\infty)$ by multiplier theory, then we can use  the $L^p$ boundedness of $M^{(1)}$ to get
\begin{align}\label{eq:3.2}
\left\|\mathbb{S}^a_{U_z,l_z}f\right\|_{L^{p}(\mathbb{R}^{2})}\ls \|f\|_{L^{p}(\mathbb{R}^{2})}\quad  \textrm{for}~ \textrm{all}~p\in (1,\infty).
\end{align}
As in \eqref{eq:2.9}, we obtain the $L^p(\mathbb{R}^2)$-boundedness of $|S^a_{U_z,l_z}f-\mathbb{S}^a_{U_z,l_z}f|$. This, combined with \eqref{eq:3.2} shows that
\begin{align}\label{eq:3.4}
\left\|S^a_{U_z,l_z}f\right\|_{L^{p}(\mathbb{R}^{2})}\ls \|f\|_{L^{p}(\mathbb{R}^{2})}\quad  \textrm{for}~ \textrm{all}~p\in (1,\infty).
\end{align}

\medskip

\textbf{Consider $S^b_{U_z,l_z}f(z)$.} We rewrite it as $\sum_{k>0 }S_{U_z,l_z}P^{(2)}_{k+k_z}f(z)$. Furthermore, we split $S^b_{U_z,l_z}f(z)$ as the sum of $S^{1,b}_{U_z,l_z}f(z)$ and $S^{2,b}_{U_z,l_z}f(z)$, where
\begin{align*}
S^{1,b}_{U_z,l_z}f(z):= \sum_{m\leq -l_z }\sum_{k>0 }S_{U_z,l_z}P^{(1)}_{m}P^{(2)}_{k+k_z}f(z) \quad \textrm{and} \quad S^{2,b}_{U_z,l_z}f(z):= \sum_{m> -l_z }\sum_{k>0 }S_{U_z,l_z}P^{(1)}_{m}P^{(2)}_{k+k_z}f(z).
\end{align*}

\medskip

We first estimate $S^{1,b}_{U_z,l_z}f(z)$. Let
\begin{align*}
\mathbb{S}^{1,b}_{U_z,l_z}f(z):= \sum_{m\leq -l_z }\sum_{k>0 }\int_{-\infty}^{\infty}P^{(1)}_{m}P^{(2)}_{k+k_z}f(x_1,x_2-U_z\gamma(t))\psi_{l_z}(t) \,\frac{\textrm{d}t}{|t|}.
\end{align*}
After changing of variable as in \eqref{eq:2.012}, we  have

$$\left|\mathbb{S}^{1,b}_{U_z,l_z}f(z)\right|\ls M^{(2)}\Bigg(\sum_{m\leq -l_z }\sum_{k>0 }P^{(1)}_{m}P^{(2)}_{k+k_z}f\Bigg)(z),
$$
which gives
\begin{align}\label{eq:3.5}
\left\|\mathbb{S}^{1,b}_{U_z,l_z}f\right\|_{L^{p}(\mathbb{R}^{2})}\ls \|f\|_{L^{p}(\mathbb{R}^{2})}\quad  \textrm{for}~ \textrm{all}~p\in (1,\infty).
\end{align}
As in \eqref{eq:2.14}, the difference between $S^{1,b}_{U_z,l_z}f(z)$ and $\mathbb{S}^{1,b}_{U_z,l_z}f(z)$ can be written as
\begin{align*}
\sum_{m\leq -l_z }\sum_{k>0 }\int_{-\infty}^{\infty}\int_{-\infty}^{\infty}P^{(2)}_{k+k_z}f(x_1-w,x_2-U_z\gamma(t)) [\check{\psi}_m(w-t)-\check{\psi}_m(w)] \psi_{l_z}(t)\,\textrm{d}w\,\frac{\textrm{d}t}{|t|},
\end{align*}
 which can be controlled  by $M^{(1)}M^{(2)}(\sum_{m\leq -l_z }\sum_{k>0 } P^{(1)}_{m}P^{(2)}_{k+k_z}f)(z)$. Therefore, we obtain the $L^p(\mathbb{R}^2)$-boundedness of  $|S^{1,b}_{U_z,l_z}f-\mathbb{S}^{1,b}_{U_z,l_z}f|$ for $p>1$. This, combined with \eqref{eq:3.5}, yields
\begin{align}\label{eq:3.7}
\left\|S^{1,b}_{U_z,l_z}f\right\|_{L^{p}(\mathbb{R}^{2})}\ls \|f\|_{L^{p}(\mathbb{R}^{2})}\quad  \textrm{for}~ \textrm{all}~p\in (1,\infty).
\end{align}

 Next we turn to $S^{2,b}_{U_z,l_z}f(z)$.  We rewrite it as $\sum_{m> 0 }\sum_{k>0 }S_{U_z,l_z}P^{(1)}_{m-l_z}P^{(2)}_{k+k_z}f(z)$. Then, it suffices to prove that there exists a positive constant $\delta$ such that
\begin{align}\label{eq:3.8}
\left\|S_{U_z,l_z}P^{(1)}_{m-l_z}P^{(2)}_{k+k_z}f\right\|_{L^{p}(\mathbb{R}^{2})}\ls 2^{-\delta (k+m)}\|f\|_{L^{p}(\mathbb{R}^{2})}\quad  \textrm{for}~ \textrm{all}~p\in (2,\infty).
\end{align}
We further bound the LHS of \eqref{eq:3.8} by
\begin{align*}
\left\|\Bigg(\sum_{k_0\in\mathbb{Z}}\sum_{l\in\mathbb{Z} }\left| S_{U^{(k_0)}_z,l}P^{(1)}_{m-l}P^{(2)}_{k+K}f \right|^p\Bigg)^{\frac{1}{p}}\right\|_{L^{p}(\mathbb{R}^{2})},\quad  \textrm{where}~ 2^{K}2^{k_0}\gamma(2^l)=1.
\end{align*}
 The commutation relation of  $l^p$ and $L^p$ norms,  together with the fact that the $l^p$ norm can be controlled by the $l^2$ norm for all $p\in(2,\infty)$, implies that  \eqref{eq:3.8}  follows from
\begin{align}\label{eq:3.9}
\left\|S_{U^{(k_0)}_z,l}P^{(1)}_{m-l}P^{(2)}_{k+K}f\right\|_{L^{p}(\mathbb{R}^{2})}\ls 2^{-\delta (k+m)}\|f\|_{L^{p}(\mathbb{R}^{2})}\quad  \textrm{uniformly}~\textrm{in}~l\in\mathbb{Z}~\textrm{and}~\textrm{for} ~\textrm{all}~ p\in (2,\infty).
\end{align}
As in \eqref{eq:2.21}, the LHS of \eqref{eq:3.9} is equal to
\begin{align*}
\left\|T^l P^{(1)}_{m}P^{(2)}_{k+K+k_0+\log_2 \gamma(2^l)}f\right\|_{L^{p}(\mathbb{R}^{2})},
\end{align*}
where the definition of $T^l$ can be found in \eqref{eq:2.20}. Noting that $2^{K}2^{k_0}\gamma(2^l)=1$ and the frequency support of $P^{(1)}_{m}P^{(2)}_{k+K+k_0+\log_2 \gamma(2^l)}f$ is contained in the annulus $\{(\xi,\eta)\in\mathbb{R}^{2}:\ \sqrt{\xi^2+\eta^2}\approx\max\{2^{m},2^{k+K+k_0+\log_2 \gamma(2^l)}\}\approx 2^{\max\{m,k\}}\}$,  we can apply Proposition \ref{proposition 2.1} to get that there exists some $\delta_0>0$ such that
\begin{align*}
\left\|T^l P^{(1)}_{m}P^{(2)}_{k+K+k_0+\log_2 \gamma(2^l)}f\right\|_{L^{p}(\mathbb{R}^{2})}\ls 2^{-\delta_0 \max\{m,k\}}\|f\|_{L^{p}(\mathbb{R}^{2})}\ls 2^{-\delta_0 (k+m)/2}\|f\|_{L^{p}(\mathbb{R}^{2})},
\end{align*}
which implies \eqref{eq:3.9}. Therefore,
\begin{align}\label{eq:3.10}
\left\|S^{2,b}_{U_z,l_z}f\right\|_{L^{p}(\mathbb{R}^{2})}\ls \|f\|_{L^{p}(\mathbb{R}^{2})}\quad  \textrm{for}~ \textrm{all}~p\in (2,\infty).
\end{align}

Combining \eqref{eq:3.4}, \eqref{eq:3.7} and \eqref{eq:3.10}, we obtain
\begin{align*}
\left\|M_{U,\gamma}f\right\|_{L^{p}(\mathbb{R}^{2})}\ls \|f\|_{L^{p}(\mathbb{R}^{2})}\quad  \textrm{for}~ \textrm{all}~p\in (2,\infty),
\end{align*}
which finishes the proof of (ii) of Theorem A.

\section{Proof of Theorem B}

Before starting the proof of Theorem B, we first  study the $L^p(\mathbb{R}^2)$-boundedness, $1<p<\infty$, of the following \emph{maximal function associated with plane curve $(t,2^j\gamma(t))$ in lacunary coefficient},
\begin{align}\label{eq:4.0}
M_{L,\gamma}f(x_1,x_2):=\sup_{j\in \mathbb{Z}}\sup_{\varepsilon>0}\frac{1}{2\varepsilon}\int_{-\varepsilon}^{\varepsilon}|f(x_1-t,x_2-2^j\gamma(t))|\,\textrm{d}t
\end{align}
 under some weaker assumptions on $\gamma$ than {\bf (H.)},  which will play an important role in obtaining Theorem B. For the corresponding results about the operator $M_{L,\gamma}$ ,  we refer to \cite{HR, HKY}   for the case   $\gamma$ is a polynomial, and  \cite[Lemma 5.1]{GHLR}   for the case  $\gamma(t)=[t]^\alpha$ with $\alpha>0$ and $\alpha \neq 1$.   Here we  will provide a different and  slightly simpler proof.

\begin{proposition}\label{main result 5} Let $\gamma\in C(\mathbb{R})\bigcap C^{2}(\mathbb{R}^+)$ be either odd or even, $\gamma(0)=0$, and increasing on $\mathbb{R}^+$, and satisfying
\begin{enumerate}
  \item[\rm(i)] there exist  $C^{(1)}_2>0, C^{(1)}_3>0$ such that $C^{(1)}_2\leq|\frac{t\gamma'(t)}{\gamma(t)}|\leq C^{(1)}_3$ for any $t\in \mathbb{R}^+$;
  \item[\rm(ii)] there exists  $C^{(2)}_2>0$ such that $|\frac{t^2\gamma''(t)}{\gamma(t)}|\geq C^{(2)}_2$ for any $t\in \mathbb{R}^+$.
\end{enumerate}
Then, for any  $p>1$, there exists  $C>0$ such that
\begin{align*}
\|M_{L,\gamma}f\|_{L^{p}(\mathbb{R}^{2})}\leq C \|f\|_{L^{p}(\mathbb{R}^{2})}
\end{align*}
for any $f\in L^{p}(\mathbb{R}^{2})$.
\end{proposition}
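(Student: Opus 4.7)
The plan is to closely mimic the proof of (ii) of Theorem A in Section 3.2, with two modifications required by (a) the weaker hypotheses on $\gamma$ (no analogue of condition (i) of \textbf{(H.)}, and no higher regularity) and (b) the lacunary structure of the coefficient set $\{2^j\}_{j\in\mathbb{Z}}$. Specifically, the appeal to Proposition \ref{proposition 2.1} must be replaced by a direct van der Corput argument that uses only condition (ii) of the present proposition, while the supremum in $j$ is passed via an $\ell^p$-summability argument made possible by lacunarity.

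After reducing to $f\geq 0$ and absorbing the supremum in $\varepsilon$ into a dyadic supremum via the doubling property (i), linearize to study
\begin{align*}
Sf(x):=\int_{-\infty}^\infty f(x_1-t,x_2-2^{j_z}\gamma(t))\psi_{l_z}(t)\,\frac{\mathrm{d}t}{|t|}
\end{align*}
with measurable $j_z,l_z\colon\mathbb{R}^2\to\mathbb{Z}$. Decompose $Sf=\sum_k S P^{(2)}_k f$ and introduce the critical scale $k_z$ through $2^{k_z+j_z}\gamma(2^{l_z})=1$. The low-frequency piece $\sum_{k\leq k_z}SP^{(2)}_k f$ is treated exactly as $S^{a}_{U_z,l_z}$ in Section 3.2: compare with the operator obtained by setting the curve term to $0$ in the second variable (dominated by $M^{(1)}$ of an $L^p$-bounded multiplier), and absorb the error by $M^{(1)}M^{(2)}$ using the bound $2^{j_z}\gamma(t)\lesssim 2^{-k_z}$ for $|t|\approx 2^{l_z}$, which follows from the doubling condition (i). This yields the desired $L^p$ estimate on the low-frequency piece for all $p\in(1,\infty)$.

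For the high-frequency piece $\sum_{k>k_z}SP^{(2)}_k f$, further decompose by $P^{(1)}_m$ and split at $m=-l_z$. The part $m\leq -l_z$ is handled as $S^{1,b}_{U_z,l_z}$ in Section 3.2, controlled by Hardy--Littlewood maximal operators. For the remaining range $m>-l_z$, $k>k_z$, write $k^*(j,l):=-j-\log_2\gamma(2^l)$. On the support of $\psi_l$, the second $t$-derivative of the phase $-t\xi-2^j\gamma(t)\eta$ satisfies
\begin{align*}
\left|2^{j}\gamma''(t)\eta\right| t^2\;\gtrsim\; 2^{j+k}\gamma(2^l)\;=\;2^{k-k^*}
\end{align*}
by condition (ii) of the proposition, so van der Corput's lemma bounds the multiplier by $\lesssim 2^{-(k-k^*)/2}$. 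Repeated integration by parts in $t$ (using that $|\xi|\approx 2^m$, $|t|\approx 2^l$) produces an additional factor $2^{-\delta(m+l)}$ when $m+l\geq 0$. Plancherel's theorem then gives the $L^2$-estimate
\begin{align*}
\bigl\|T_{j,l}P^{(1)}_m P^{(2)}_k f\bigr\|_{L^2(\mathbb{R}^2)}\;\lesssim\; 2^{-\delta(k-k^*+m+l)}\bigl\|P^{(1)}_m P^{(2)}_k f\bigr\|_{L^2(\mathbb{R}^2)}.
\end{align*}

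To pass from the single-scale estimate to the supremum in $(j_z,l_z)$, use the pointwise bound $\sup_{(j,l)}|T_{j,l}g|\leq\bigl(\sum_{(j,l)}|T_{j,l}g|^p\bigr)^{1/p}$ together with the $L^2$ decay above and a trivial $L^\infty$ bound, interpolating to obtain $L^p$ decay for each $p\in(1,\infty)$. Summation over $(j,l)$ converges geometrically: for fixed $l$ the parameter $k^*(j,l)$ is a bijective function of $j$, so the sum in $j$ is a single geometric series in $k-k^*$, and the outer sum over $l>-m$ converges by the factor $2^{-\delta(m+l)}$. Combining with the low-frequency estimate and using the Littlewood--Paley square function completes the proof. \emph{The main obstacle} is the passage from the $L^2$ decay (obtained cheaply by Plancherel and van der Corput) to $L^p$ for $1<p<2$, where the $\ell^p$-summation of the square function must be carefully interpolated against the trivial $L^\infty$ bound; here the lacunary structure of $\{2^j\}$ is essential, since without it the effective range of $j$ for which $k^*(j,l)\approx k$ would be a continuum and the $\ell^p$-sum would diverge.
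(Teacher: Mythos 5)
Your outline agrees with the paper up to and including the range $p\geq 2$: the paper also replaces the local smoothing input of Proposition \ref{proposition 2.1} by the elementary van der Corput/Plancherel argument from the proof of \eqref{eq:20.15} (this is exactly how \eqref{eq:5.5} is obtained there, using only the second-derivative lower bound from condition (ii) and interpolation with the trivial $L^\infty$ bound), and the passage through the supremum in $(j_0,l)$ via $\sup\leq \ell^p$ plus the commutation of $\ell^p$ and $L^p$ norms is the same.

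The genuine gap is in your treatment of $1<p<2$. Interpolating an $L^2$ estimate (with decay) against a trivial $L^\infty$ bound only produces $L^p$ estimates for $p\in[2,\infty]$; it cannot reach any exponent below $2$, lacunary coefficients or not. Your closing remark identifies this as ``the main obstacle'' but the proposed resolution --- ``carefully interpolated against the trivial $L^\infty$ bound,'' with lacunarity turning the $j$-sum into a geometric series --- does not address it: the missing endpoint is on the side $p<2$, and no amount of care in the $L^2$--$L^\infty$ interpolation or in the summation over $j$ supplies it. The paper's actual mechanism is the Nagel--Stein--Wainger bootstrapping of Section 4 (estimates \eqref{eq:n500}--\eqref{eq:5.8}): one splits the task into a decaying estimate \eqref{eq:5.2} valid only for $p>2$ and a \emph{uniform, non-decaying} estimate \eqref{eq:5.3} for $p\in(1,2]$, and then interpolates the two to recover summable decay in $(k,m)$ for all $p>1$. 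The estimate \eqref{eq:5.3} is itself proved by a vector-valued iteration: the endpoint $(q_1,q_2)=(\infty,q_2)$ with $q_2>2$ in \eqref{eq:n50} is supplied by the already-established $L^{q_2}$-boundedness of $M_{L,\gamma}$ for $q_2>2$ (i.e.\ the $p>2$ case feeds back into the proof), the diagonal $q_1=q_2$ endpoint is the trivial single-scale bound \eqref{eq:5.8} obtained from Minkowski's inequality, and iterating the interpolation pushes the admissible range from $(4/3,2]$ to $(8/7,2]$ and so on down to all of $(1,2]$. Your proposal contains none of this; without it, the argument stops at $p=2$.
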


\begin{proof}[Proof of Proposition \ref{main result 5}.] By linearization, it can be seen that $M_{L,\gamma}$ is the special situation of $M_{U,\gamma}$ with $U(x_1,x_2):=2^{j(x_1,x_2)}$, where $j(x_1,x_2):\ \mathbb{R}^2\rightarrow\mathbb{Z}$ is a measurable function. Therefore, we may use some ideas of the proof of (ii) of Theorem A. Recall that
\begin{align*}
   S_{u,l}f(x_1,x_2)= \int_{-\infty}^{\infty}f(x_1-t,x_2-u\gamma(t))\psi_l(t) \,\frac{\textrm{d}t}{|t|}
\end{align*}
in \eqref{eq:3.0}. Repeating the proof of (ii) of Theorem A, from \eqref{eq:3.4} and \eqref{eq:3.7}, as in \eqref{eq:3.8}, it suffices to prove that there exists a positive constant $\delta$ such that
\begin{align*}
\left\|S_{2^{j_z},l_z}P^{(1)}_{m-l_z}P^{(2)}_{k+k_z}f\right\|_{L^{p}(\mathbb{R}^{2})}\ls 2^{-\delta (k+m)}\|f\|_{L^{p}(\mathbb{R}^{2})}\quad  \textrm{for}~ \textrm{all}~p\in (1,\infty),
\end{align*}
where $k,m\in\mathbb{N} $ and $2^{k_z}2^{j_z}\gamma(2^{l_z})=1$. By interpolation, it is enough to prove
\begin{align}\label{eq:5.2}
\left\|S_{2^{j_z},l_z}P^{(1)}_{m-l_z}P^{(2)}_{k+k_z}f\right\|_{L^{p}(\mathbb{R}^{2})}\ls 2^{-\delta (k+m)}\|f\|_{L^{p}(\mathbb{R}^{2})}\quad  \textrm{for}~ \textrm{all}~p\in (2,\infty),
\end{align}
and
\begin{align}\label{eq:5.3}
\left\|S_{2^{j_z},l_z}P^{(1)}_{m-l_z}P^{(2)}_{k+k_z}f\right\|_{L^{p}(\mathbb{R}^{2})}\ls \|f\|_{L^{p}(\mathbb{R}^{2})}\quad  \textrm{for}~ \textrm{all}~p\in (1,2].
\end{align}

\textbf{Proof of \eqref{eq:5.2}.} Indeed, we have obtained \eqref{eq:5.2} in \eqref{eq:3.8} by a local smoothing estimate, where $\gamma$ satisfies {\bf(H.)}. For $M_{L,\gamma}$, we want to explore a simper method to get \eqref{eq:5.2}. We first bound the LHS of \eqref{eq:5.2} by
\begin{align*}
 \left\|\Bigg(\sum_{j_0\in\mathbb{Z} }\sum_{l\in\mathbb{Z} }\left| S_{2^{j_0},l}P^{(1)}_{m-l}P^{(2)}_{k+K}f \right|^p\Bigg)^{\frac{1}{p}}\right\|_{L^{p}(\mathbb{R}^{2})} \quad  \textrm{where}~2^{K}2^{j_0}\gamma(2^l)=1.
\end{align*}
As before,  it remains to prove
\begin{align}\label{eq:5.4}
\left\|S_{2^{j_0},l}P^{(1)}_{m-l}P^{(2)}_{k+K}f \right\|_{L^{p}(\mathbb{R}^{2})}\ls 2^{-\delta (k+m)}\|f\|_{L^{p}(\mathbb{R}^{2})}\quad  \textrm{uniformly}~\textrm{in}~l\in\mathbb{Z}~\textrm{for}~\textrm{all}~p\in (2,\infty).
\end{align}
As in \eqref{eq:2.21},  $2^{K}2^{j_0}\gamma(2^l)=1$  implies that the LHS of \eqref{eq:5.4} equals  $\|\mathbb{T}^l P^{(1)}_{m}P^{(2)}_{k}f\|_{L^{p}(\mathbb{R}^{2})}$, where
\begin{align*}
\mathbb{T}^lf(x_1,x_2):=\int_{-\infty}^{\infty}f(x_1-t,x_2- \Gamma_{l}(t))\psi(t)\,\frac{\textrm{d}t}{|t|}\quad \textrm{and} \quad \Gamma_{l}(t):=\frac{\gamma(2^lt)}{\gamma(2^l)}.
\end{align*}
Therefore, it suffices to show
\begin{align}\label{eq:5.5}
\left\|\mathbb{T}^l P^{(1)}_{m}P^{(2)}_{k}f\right\|_{L^{p}(\mathbb{R}^{2})}\ls 2^{-\delta (k+m)}\|f\|_{L^{p}(\mathbb{R}^{2})}\quad  \textrm{uniformly}~\textrm{in}~l\in\mathbb{Z}~\textrm{for}~\textrm{all}~p\in (2,\infty),
\end{align}
Repeating this process of proving \eqref{eq:20.15}, we  obtain \eqref{eq:5.5}.

\medskip

\textbf{Proof of \eqref{eq:5.3}.} By linearization, we bound the expression inside the $L^{p}(\mathbb{R}^{2})$ norm on the LHS of \eqref{eq:5.3} by $\sup_{l\in\mathbb{Z}}\sup_{j_0\in\mathbb{Z}}| S_{2^{j_0},l}P^{(1)}_{m-l}P^{(2)}_{k+K}f|$ with $2^{K}2^{j_0}\gamma(2^{l})=1$. Therefore, it suffices to show
\begin{align}\label{eq:n500}
 \left\| \Bigg(\sum_{l\in\mathbb{Z} }\left|\sup_{j_0\in\mathbb{Z}}\left| S_{2^{j_0},l}P^{(1)}_{m-l}P^{(2)}_{k+K}f\right|\right|^2\Bigg)^{\frac{1}{2}} \right\|_{L^{p}(\mathbb{R}^{2})}\ls \left\|\Bigg(\sum_{l\in\mathbb{Z} }\left| P^{(1)}_{m-l}f \right|^2\Bigg)^{\frac{1}{2}}\right\|_{L^{p}(\mathbb{R}^{2})}.
\end{align}
We will follow the method of bootstrapping an iterated interpolation argument in the spirit of Nagel, Stein and Wainger \cite{NSW}. Now consider the more general estimate
\begin{align}\label{eq:n50}
\left\| \Bigg(\sum_{l\in\mathbb{Z} }\left|\sup_{j_0\in\mathbb{Z}}\left| S_{2^{j_0},l}P^{(1)}_{m-l}P^{(2)}_{k+K}f\right|\right|^{q_1}\Bigg)^{\frac{1}{q_1}} \right\|_{L^{q_2}(\mathbb{R}^{2})}\ls \left\| \Bigg(\sum_{l\in\mathbb{Z}  }\left|P^{(1)}_{m-l}f\right|^{q_1}\Bigg)^{\frac{1}{q_1}} \right\|_{L^{q_2}(\mathbb{R}^{2})}
\end{align}
for some $q_1\in(1,\infty]$ and $q_2\in(1,\infty)$.

If $q_1=\infty$ and $q_2>2$, by linearization, we have that the expression inside the $L^{q_2}(\mathbb{R}^{2})$ norm on the LHS of \eqref{eq:n50} can be bounded by $M_{L,\gamma}M^{(2)}(\sup_{l\in\mathbb{Z}}|P^{(1)}_{m-l}f| )$. It is clear that \eqref{eq:5.2} implies the $L^{q_2}(\mathbb{R}^2)$-boundedness of $M_{L,\gamma}$ for all $q_2>2$. This along with the $L^{q_2}(\mathbb{R}^2)$-boundedness of $M^{(2)}$ yields
\begin{align}\label{n4.1}
\eqref{eq:n50}\  {\rm holds \   for} \  q_1=\infty \ {\rm and } \ q_2>2.
\end{align}
We claim
\begin{align}\label{eq:n51}
\left\|\sup_{j_0\in\mathbb{Z}}\left| S_{2^{j_0},l}P^{(1)}_{m-l}P^{(2)}_{k+K}f\right| \right\|_{L^{p}(\mathbb{R}^{2})}\ls\left \|f\right\|_{L^{p}(\mathbb{R}^{2})}\quad  \textrm{for}~ \textrm{all}~p\in (1,\infty).
\end{align}
If the claim above has been proved, then we can obtain \eqref{eq:n50} for the case $q_1=q_2\in(1,\infty)$ by replacing $f$ as $P^{(1)}_{m-l}f$, and using the commutation relation of $l^p$ and $L^p$ norms. Interpolation this with \eqref{n4.1} implies that \eqref{eq:n500} holds for all $p\in(4/3,2]$. Repeating the interpolation argument and using $q_1=\infty$ and $q_2\in(4/3,2]$, we can prove \eqref{eq:n500} holds for all $p\in(8/7,2]$. Reiterating this process sufficiently many times, we thus show \eqref{eq:n500} holds for all $p\in(1,2]$.

Therefore, it suffices to show $\eqref{eq:n51}$. Furthermore, note that $\sup_{j_0\in\mathbb{Z}}| S_{2^{j_0},l}P^{(1)}_{m-l}P^{(2)}_{k+K}f|$ can be bounded by $M_{L,\gamma}M^{(1)}M^{(2)}f$, and the $L^{p}(\mathbb{R}^2)$-boundedness of $M_{L,\gamma}$, $M^{(1)}$ and $M^{(2)}$ for all $p>2$, thus we may restrict $p\in(1,2]$ in the proof of $\eqref{eq:n51}$. This further reduces to showing
\begin{align}\label{eq:n600}
 \left\| \Bigg(\sum_{j_0\in\mathbb{Z} }\left| S_{2^{j_0},l}P^{(1)}_{m-l}P^{(2)}_{k+K}f\right|^2\Bigg)^{\frac{1}{2}} \right\|_{L^{p}(\mathbb{R}^{2})}\ls \left\|\Bigg(\sum_{j_0\in\mathbb{Z} }\left| P^{(2)}_{k+K}f \right|^2\Bigg)^{\frac{1}{2}}\right\|_{L^{p}(\mathbb{R}^{2})}\quad  \textrm{for}~ \textrm{all}~p\in (1,2].
\end{align}
Repeating the method of bootstrapping an iterated interpolation argument above, it suffices to prove
\begin{align}\label{eq:5.8}
\left\| S_{2^{j_0},l}P^{(1)}_{m-l}P^{(2)}_{k+K}f \right\|_{L^{p}(\mathbb{R}^{2})}\ls\left \| f\right\|_{L^{p}(\mathbb{R}^{2})}\quad  \textrm{for}~ \textrm{all}~p\in (1,\infty).
\end{align}
By Minkowski's inequality, it is easy to see that the LHS of \eqref{eq:5.8} can be bounded by
\begin{align*}
 \int_{-\infty}^{\infty} \left(\int_{-\infty}^{\infty}\int_{-\infty}^{\infty} \left|P^{(1)}_{m-l}P^{(2)}_{k+K}f(x_1-t,x_2- 2^{j_0}\gamma(t))\right |^p  \,\textrm{d}x_1 \,\textrm{d}x_2 \right)^{\frac{1}{p}} \psi_l(t)\,\frac{\textrm{d}t}{|t|}\ls \|f\|_{L^{p}(\mathbb{R}^{2})}.
\end{align*}
We then obtain \eqref{eq:5.8}. This finishes the proof of Propostion \ref{main result 5}.\end{proof}

\subsection {Proof of (i) of Theorem B}

Recall the definition of $H^l_{U,\gamma}$ in \eqref{eq:2.40}  and denote $\Delta:=\{l\in\mathbb{Z}:\ 2^{l}\leq \varepsilon_0 \}$, we then write
\begin{align*}
H^{\varepsilon_0}_{U,\gamma}P^{(2)}_kf=\sum_{l\in \Delta}H^l_{U,\gamma}P^{(2)}_kf.
\end{align*}
We further split $H^{\varepsilon_0}_{U,\gamma}P^{(2)}_kf$ as the sum of the following low frequency part $I^{\varepsilon_0}_{U,\gamma}P^{(2)}_kf$ and the high frequency part $II^{\varepsilon_0}_{U,\gamma}P^{(2)}_kf$,
\begin{align*}
I^{\varepsilon_0}_{U,\gamma}P^{(2)}_kf:=\sum_{l\in \Delta,\, l\leq l_z}H^l_{U,\gamma}P^{(2)}_kf \quad \textrm{and} \quad II^{\varepsilon_0}_{U,\gamma}P^{(2)}_kf:=\sum_{l\in \Delta,\, l> l_z}H^l_{U,\gamma}P^{(2)}_kf.
\end{align*}

We only consider the case  $l_z< \log_2 \varepsilon_0$,  since the other case $l_z\geq \log_2 \varepsilon_0$ can be handled similarly. The proof of $I^{\varepsilon_0}_{U,\gamma}P^{(2)}_kf$ can be found in \eqref{eq:2.10}. As for $II^{\varepsilon_0}_{U,\gamma}P^{(2)}_kf$, we may  rewrite it as $\sum_{0<l<\log_2\varepsilon_0- l_z}H^{l+l_z}_{U,\gamma}P^{(2)}_kf$. Therefore, for $l\in \mathbb{N}$ with $0<l<\log_2\varepsilon_0- l_z$, it  suffices to show that there exists a positive constant $\delta$ such that
\begin{align*}
\left\|H^{l+l_z}_{U,\gamma}P^{(2)}_kf\right\|_{L^{p}(\mathbb{R}^{2})}\ls 2^{-\delta l}\|f\|_{L^{p}(\mathbb{R}^{2})}\quad  \textrm{for}~ \textrm{all}~p\in (1,2].
\end{align*}
By interpolation, it is enough  to prove that there exists a positive constant $\delta$ such that
\begin{align}\label{eq:30.6}
\left\|H^{l+l_z}_{U,\gamma}P^{(2)}_kf\right\|_{L^{p}(\mathbb{R}^{2})}\ls 2^{-\delta l}\|f\|_{L^{p}(\mathbb{R}^{2})}\quad  \textrm{for}~ \textrm{all}~l\in \mathbb{N}~\textrm{and}~p\in (2,\infty),
\end{align}
and
\begin{align}\label{eq:30.7}
\left\|H^{l+l_z}_{U,\gamma}P^{(2)}_kf\right\|_{L^{p}(\mathbb{R}^{2})}\ls \|f\|_{L^{p}(\mathbb{R}^{2})}\quad  \textrm{for}~ \textrm{all}~l\in \mathbb{N}~\textrm{with}~0<l<\log_2\varepsilon_0- l_z~\textrm{and}~p\in (1,2].
\end{align}

The proof of \eqref{eq:30.6} has been obtained in \eqref{eq:2.11}. So it remains to show \eqref{eq:30.7}. From \eqref{eq:2.12} and \eqref{eq:2.17}, it suffices to prove that there exists a positive constant $\delta$ such that
\begin{align}\label{eq:30.14}
\left\|H^{l+l_z}_{U,\gamma}P^{(1)}_{j-l_z-l}P^{(2)}_kf\right\|_{L^{p}(\mathbb{R}^{2})}\ls 2^{-\delta j}\|f\|_{L^{p}(\mathbb{R}^{2})}\quad  \textrm{for}~ \textrm{all}~ p\in (1,2],
\end{align}
where $j,l\in \mathbb{N}$ with $0<l<\log_2\varepsilon_0- l_z$ and $j\geq 1$, and $2^{k}2^{V_z}\gamma(2^{l_z})=1$.

If we can prove
\begin{align}\label{eq:30.15}
\left\|H^{l+l_z}_{U,\gamma}P^{(1)}_{j-l_z-l}P^{(2)}_kf\right\|_{L^{p}(\mathbb{R}^{2})}\ls \|f\|_{L^{p}(\mathbb{R}^{2})}\quad  \textrm{for}~ \textrm{all}~ p\in (1,2],
\end{align}
then \eqref{eq:30.14} will follow by interpolation  between \eqref{eq:2.18} and \eqref{eq:30.15}.

Next, we will prove \eqref{eq:30.15}. Define a new measurable function $\tilde{U}^{(k_0)}_z:\ \mathbb{R}^2\rightarrow[2^{k_0}, 2^{k_0+1})$ as
\begin{align}\label{eq:30.16}
\tilde{U}^{(k_0)}_z:=U_z ~\textrm{if}~ V_z=k_0,  \quad \textrm{and} \quad\tilde{U}^{(k_0)}_z:=2^{k_0} ~\textrm{if} ~V_z\neq k_0.
\end{align}
We note that the property that both $\tilde{U}^{(k_0)}_z$ and $U^{(k_0)}_z$ are  in $[2^{k_0}, 2^{k_0+1})$  is very important to us. Then the expression inside the $L^{p}(\mathbb{R}^{2})$ norm on the LHS of \eqref{eq:30.15} can be bounded by
\begin{align*}
\sup_{k_0 \in \mathbb{Z}}\left|H^{l+l_0}_{\tilde{U}^{(k_0)}_z,\gamma}P^{(1)}_{j-l_0-l}P^{(2)}_kf\right|, \quad \textrm{where}~ 2^{k}2^{k_0}\gamma(2^{l_0})=1.
\end{align*}
Furthermore, let $\Lambda:=\{l\in\mathbb{Z}:\ 0<l<\log_2\varepsilon_0- l_0 \}$, we then bound the LHS of \eqref{eq:30.15} by
\begin{align*}
\left\| \Bigg(\sum_{k_0\in\mathbb{Z} }\left|\mathbf{1}_\Lambda (l) H^{l+l_0}_{\tilde{U}^{(k_0)}_z,\gamma}P^{(1)}_{j-l_0-l}P^{(2)}_kf\right|^2\Bigg)^{\frac{1}{2}} \right\|_{L^{p}(\mathbb{R}^{2})}, \quad \textrm{where}~2^{k}2^{k_0}\gamma(2^{l_0})=1.
\end{align*}
Therefore, for $p\in (1,2]$, it is enough to prove that
\begin{align}\label{eq:30.17}
\left\| \Bigg(\sum_{k_0\in\mathbb{Z} }\left|\mathbf{1}_\Lambda (l) H^{l+l_0}_{\tilde{U}^{(k_0)}_z,\gamma}P^{(1)}_{j-l_0-l}P^{(2)}_kf\right|^2\Bigg)^{\frac{1}{2}} \right\|_{L^{p}(\mathbb{R}^{2})}\ls \left\| \Bigg(\sum_{k_0\in\mathbb{Z}  }\left|P^{(1)}_{j-l_0-l}f\right|^2\Bigg)^{\frac{1}{2}} \right\|_{L^{p}(\mathbb{R}^{2})}.
\end{align}
 Note that $|\mathbf{1}_\Lambda (l) H^{l+l_0}_{\tilde{U}^{(k_0)}_z,\gamma}P^{(1)}_{j-l_0-l}P^{(2)}_kf|$ can be bounded by $M_{U,\gamma}M^{(2)}(\sup_{k_0\in\mathbb{Z}}|P^{(1)}_{j-l_0-l}f| )$ and apply (ii) of Theorem A,  then we can repeat  this process from $\eqref{eq:n500}$ to $\eqref{eq:n51}$.  Thus estimate \eqref{eq:30.17} reduces to proving
\begin{align}\label{eq:30.19}
\left\|\mathbf{1}_\Lambda (l) H^{l+l_0}_{\tilde{U}^{(k_0)}_z,\gamma}P^{(1)}_{j-l_0-l}P^{(2)}_kf \right\|_{L^{p}(\mathbb{R}^{2})}\ls\left \| f\right\|_{L^{p}(\mathbb{R}^{2})}\quad  \textrm{for}~ \textrm{all}~p\in (1,\infty).
\end{align}

In fact,  we  will prove  the following   stronger version,
\begin{align*}
\left\|\mathbf{1}_\Lambda (l) H^{l+l_0}_{\tilde{U}^{(k_0)}_z,\gamma}f \right\|_{L^{p}(\mathbb{R}^{2})}\ls\left \| f\right\|_{L^{p}(\mathbb{R}^{2})}\quad  \textrm{for}~ \textrm{all}~p\in (1,\infty).
\end{align*}
We now define another new measurable function  $\tilde{\tilde{U}}^{(k_0)}_z:\ \mathbb{R}^2\rightarrow[2^{k_0}, 2^{k_0+1})$ as follows:
\begin{align}\label{eq:30.21}
\tilde{\tilde{U}}^{(k_0)}_z:=U_z~ \textrm{if}~ V_z=k_0, ~\textrm{and}~ \textrm{extend} ~\tilde{\tilde{U}}^{(k_0)}_z ~\textrm{to the whole} ~\mathbb{R}^{2}~ \textrm{space with} ~ \|\tilde{\tilde{U}}^{(k_0)}\|_{\textrm{Lip}}\leq 2 \|U\|_{\textrm{Lip}}.
\end{align}
 We remark that both $\tilde{\tilde{U}}^{(k_0)}_z$ and $\tilde{U}^{(k_0)}_z$ and are in $[2^{k_0}, 2^{k_0+1})$, but  the former is  Lipschitz and the latter is not. By the definitions of $M_{L,\gamma}$, $\tilde{U}^{(k_0)}_z$  and $\tilde{\tilde{U}}^{(k_0)}_z$,  one has the following pointwise estimate
\begin{align}\label{eq:30.y}
\mathbf{1}_\Lambda (l) H^{l+l_0}_{\tilde{U}^{(k_0)}_z,\gamma}f(z)\ls M_{L,\gamma}f(z)+\mathbf{1}_\Lambda (l) H^{l+l_0}_{\tilde{\tilde{U}}^{(k_0)}_z,\gamma}f(z).
\end{align}

By  Propostion \ref{main result 5}, it suffices to prove
\begin{align}\label{eq:30.23}
\left\| \mathbf{1}_\Lambda (l) H^{l+l_0}_{\tilde{\tilde{U}}^{(k_0)}_z,\gamma}f\right\|_{L^{p}(\mathbb{R}^{2})}\ls\left \| f\right\|_{L^{p}(\mathbb{R}^{2})}\quad  \textrm{for}~ \textrm{all}~p\in (1,\infty).
\end{align}
Indeed, by Minkowski's inequality, the LHS of \eqref{eq:30.23} can be bounded by
\begin{align}\label{eq:30.24}
\mathbf{1}_\Lambda (l) \int_{-\infty}^{\infty} \left(\int_{-\infty}^{\infty}\int_{-\infty}^{\infty} \left|f(x_1-t,x_2-\tilde{\tilde{U}}^{(k_0)}_z\gamma(t))\right |^p  \,\textrm{d}x_1 \,\textrm{d}x_2 \right)^{\frac{1}{p}}     \psi_{l+l_0}(t)\,\frac{\textrm{d}t}{|t|}.
\end{align}
Let
\begin{align}\label{eq:30.25}
X_1:=x_1-t  \quad   \textrm{and} \quad   X_2:=x_2-\tilde{\tilde{U}}^{(k_0)}_z\gamma(t),
\end{align}
we then write the corresponding Jacobian determinant as
\begin{align*}
\frac{\partial(X_1,X_2)}{\partial(x_1,x_2)}=
\left|
\begin{array}{ccc}
 1~~&~~  0  \\
 - \frac{\partial}{\partial_{x_1}}\tilde{\tilde{U}}^{(k_0)}_z \gamma(t)   ~~&~~1- \frac{\partial}{\partial_{x_2}}\tilde{\tilde{U}}^{(k_0)}_z \gamma(t)
\end{array}
\right|=1- \frac{\partial}{\partial_{x_2}}\tilde{\tilde{U}}^{(k_0)}_z \gamma(t).
\end{align*}
Noting that $t\in \textrm{supp}~\psi_{l+l_0}$, $l\in \Delta$ and $|\frac{\partial}{\partial_{x_2}}\tilde{\tilde{U}}^{(k_0)}_z|\leq \|\tilde{\tilde{U}}^{(k_0)}\|_{\textrm{Lip}}\leq 2 \|U\|_{\textrm{Lip}}$, we assert that
\begin{align*}
\left|\frac{\partial}{\partial_{x_2}}\tilde{\tilde{U}}^{(k_0)}_z \gamma(t)\right|\leq 2 \|U\|_{\textrm{Lip}}\gamma(2 \varepsilon_0).
\end{align*}
Furthermore, let the positive constant $\varepsilon_0$ satisfying $\gamma(2 \varepsilon_0)\leq 1/4\|U\|_{\textrm{Lip}}$,  then we  conclude that
\begin{align*}
\left|\frac{\partial(X_1,X_2)}{\partial(x_1,x_2)}\right|\geq \frac{1}{2}.
\end{align*}
This implies that the change of variables in \eqref{eq:30.25} is valid and thus \eqref{eq:30.24} can be bounded by $\| f\|_{L^{p}(\mathbb{R}^{2})}$ for all $p\in(1,\infty)$. Therefore, we obtain \eqref{eq:30.23}, which completes the proof of (i) of Theorem B.

\subsection{Proof of (ii) of Theorem B}

From \eqref{eq:3.4} and \eqref{eq:3.7}, we have obtained the $L^p(\mathbb{R}^2)$-boundedness of $S^a_{U_z,l_z}f(z)$ and $S^{1,b}_{U_z,l_z}f(z)$ for all $p\in(1,\infty)$. But it is possible that the operator $S^{2,b}_{U_z,l_z}f(z)$ is  unbounded on $L^p(\mathbb{R}^2)$ for any $p\in(1,2]$, if we only assume that $U$ is a measurable function. Therefore, for the case $p\in(1,2]$, instead of $M_{U,\gamma}$, we consider $M^{\varepsilon_0}_{U,\gamma}$ with $U$ is a Lipschitz function. The truncation $\varepsilon_0$ depending only on $\|U\|_{\textrm{Lip}}$ plays a crucial role in the proof of the $L^p(\mathbb{R}^2)$-boundedness of $S^{2,b}_{U_z,l_z}f(z)$ for all $p\in(1,2]$, which implies that the measurable function $l_z:\ \mathbb{R}^2\rightarrow\mathbb{Z}$ satisfies $2^{l_z}\leq \varepsilon_0$.

We now turn to $S^{2,b}_{U_z,l_z}$, i.e.,
\begin{align*}
S^{2,b}_{U_z,l_z}f(z)=\sum_{m> 0 }\sum_{k>0 }S_{U_z,l_z}P^{(1)}_{m-l_z}P^{(2)}_{k+k_z}f(z).
\end{align*}
Indeed, by interpolation with \eqref{eq:3.8}, it suffices to show
\begin{align}\label{eq:4.1}
\left\|S_{U_z,l_z}P^{(1)}_{m-l_z}P^{(2)}_{k+k_z}f\right\|_{L^{p}(\mathbb{R}^{2})}\ls \|f\|_{L^{p}(\mathbb{R}^{2})}\quad  \textrm{for}~ \textrm{all}~p\in (1,2].
\end{align}

Recall the measurable function $\tilde{U}^{(k_0)}_z:\ \mathbb{R}^2\rightarrow[2^{k_0}, 2^{k_0+1})$ defined in \eqref{eq:30.16}. Then the expression inside the $L^{p}(\mathbb{R}^{2})$ norm on the LHS of \eqref{eq:4.1} can be bounded by
\begin{align*}
\sup_{k_0 \in \mathbb{Z}}\left|S_{\tilde{U}^{(k_0)}_z,l_z}P^{(1)}_{m-l_z}P^{(2)}_{k+k_z}f\right|, \quad \textrm{where}~ 2^{k_z}2^{k_0}\gamma(2^{l_z})=1.
\end{align*}
Noting $2^{l_z}\leq \varepsilon_0$ and letting $\Delta:=\{l\in\mathbb{Z}:\ 2^{l}\leq \varepsilon_0 \}$, we bound the LHS of \eqref{eq:4.1} by
\begin{align*}
\left\| \Bigg(\sum_{l\in\mathbb{Z} }\left|\mathbf{1}_\Delta (l)\sup_{k_0\in\mathbb{Z}}\left| S_{\tilde{U}^{(k_0)}_z,l}P^{(1)}_{m-l}P^{(2)}_{k+K}f\right|\right|^2\Bigg)^{\frac{1}{2}} \right\|_{L^{p}(\mathbb{R}^{2})}, \quad \textrm{where}~2^{K}2^{k_0}\gamma(2^{l})=1.
\end{align*}
Therefore, for $p\in (1,2]$, it is enough to prove that
\begin{align}\label{eq:4.3}
\left\|\Bigg(\sum_{l\in\mathbb{Z} }\left|\mathbf{1}_\Delta (l)\sup_{k_0\in\mathbb{Z}}\left| S_{\tilde{U}^{(k_0)}_z,l}P^{(1)}_{m-l}P^{(2)}_{k+K}f\right|\right|^2\Bigg)^{\frac{1}{2}} \right\|_{L^{p}(\mathbb{R}^{2})}\ls \left\| \Bigg(\sum_{l\in\mathbb{Z}  }\left|P^{(1)}_{m-l}f\right|^2\Bigg)^{\frac{1}{2}} \right\|_{L^{p}(\mathbb{R}^{2})}.
\end{align}
Notice that $\sup_{l\in\mathbb{Z}}\mathbf{1}_\Delta (l)\sup_{k_0\in\mathbb{Z}}| S_{\tilde{U}^{(k_0)}_z,l}P^{(1)}_{m-l}P^{(2)}_{k+K}f|$ can be bounded by $M_{U,\gamma}M^{(2)}(\sup_{l\in\mathbb{Z}}|P^{(1)}_{m-l}f| )$, and the $L^p(\mathbb{R}^2)$-boundedness, $p\in (2,\infty)$, of $M_{U,\gamma}$ has been obtained in (ii) of Theorem A. Repeating this process from $\eqref{eq:n500}$ to $\eqref{eq:n51}$,  it is enough to show
\begin{align}\label{eq:4.5}
\left\| \mathbf{1}_\Delta (l)\sup_{k_0\in\mathbb{Z}}\left| S_{\tilde{U}^{(k_0)}_z,l}P^{(1)}_{m-l}P^{(2)}_{k+K}f\right| \right\|_{L^{p}(\mathbb{R}^{2})}\ls\left \| f\right\|_{L^{p}(\mathbb{R}^{2})}\quad  \textrm{for}~ \textrm{all}~p\in (1,\infty).
\end{align}
Furthermore, it suffices to prove \eqref{eq:4.5} for $p\in(1,2]$. This is because $\sup_{k_0\in\mathbb{Z}} S_{\tilde{U}^{(k_0)}_z,l}$ can be bounded by $M_{U,\gamma}$ and the result in (ii) of Theorem A. As in \eqref{eq:4.3}, we  only  need to show that
\begin{align*}
\left\| \Bigg(\sum_{k_0\in\mathbb{Z} }\left|\mathbf{1}_\Delta (l)S_{\tilde{U}^{(k_0)}_z,l}P^{(1)}_{m-l}P^{(2)}_{k+K}f\right|^2\Bigg)^{\frac{1}{2}} \right\|_{L^{p}(\mathbb{R}^{2})}\ls \left\| \Bigg(\sum_{k_0\in\mathbb{Z}  }\left|P^{(2)}_{k+K}f\right|^2\Bigg)^{\frac{1}{2}} \right\|_{L^{p}(\mathbb{R}^{2})}\quad \textrm{where}~2^{K}2^{k_0}\gamma(2^{l})=1.
\end{align*}
By the same argument as in \eqref{eq:4.3}-\eqref{eq:4.5}, it remains to show that
\begin{align*}
\left\| \mathbf{1}_\Delta (l)S_{\tilde{U}^{(k_0)}_z,l}P^{(1)}_{m-l}P^{(2)}_{k+K}f\right\|_{L^{p}(\mathbb{R}^{2})}\ls\left \| f\right\|_{L^{p}(\mathbb{R}^{2})}\quad  \textrm{for}~ \textrm{all}~p\in (1,\infty).
\end{align*}

In fact we will prove the following stronger version,
\begin{align*}
\left\| \mathbf{1}_\Delta (l)S_{\tilde{U}^{(k_0)}_z,l}f\right\|_{L^{p}(\mathbb{R}^{2})}\ls\left \| f\right\|_{L^{p}(\mathbb{R}^{2})}\quad  \textrm{for}~ \textrm{all}~p\in (1,\infty).
\end{align*}
As in \eqref{eq:30.y}, we have the following pointwise estimate
\begin{align*}
\mathbf{1}_\Delta (l)S_{\tilde{U}^{(k_0)}_z,l}f(z)\leq M_{L,\gamma}f(z)+\mathbf{1}_\Delta (l)S_{\tilde{\tilde{U}}^{(k_0)}_z,l}f(z).
\end{align*}
From Propostion \ref{main result 5}, we only have to show that
\begin{align}\label{eq:4.9}
\left\| \mathbf{1}_\Delta (l)S_{\tilde{\tilde{U}}^{(k_0)}_z,l}f\right\|_{L^{p}(\mathbb{R}^{2})}\ls\left \| f\right\|_{L^{p}(\mathbb{R}^{2})}\quad  \textrm{for}~ \textrm{all}~p\in (1,\infty).
\end{align}
As in \eqref{eq:30.23}, noting that $t\in \textrm{supp}~\psi_l$, $l\in \Delta$ and $|\frac{\partial}{\partial_{x_2}}\tilde{\tilde{U}}^{(k_0)}_z|\leq \|\tilde{\tilde{U}}^{(k_0)}\|_{\textrm{Lip}}\leq 2 \|U\|_{\textrm{Lip}}$ imply $|\frac{\partial}{\partial_{x_2}}\tilde{\tilde{U}}^{(k_0)}_z \gamma(t)|\leq 2 \|U\|_{\textrm{Lip}}\gamma(2 \varepsilon_0)$, and the fact that $\varepsilon_0$ satisfies $\gamma(2 \varepsilon_0)\leq 1/4\|U\|_{\textrm{Lip}}$, we then obtain \eqref{eq:4.9}. This completes the proof of (ii) of Theorem B.

\bigskip

\noindent{\bf Acknowledgments.} We would like to thank Prof. Lixin Yan and Prof. Junfeng Li for helpful suggestions and discussions. We also particularly thank Dr.  Shaoming Guo for valuable discussions and providing us some important references which is very useful to our work.

\bigskip

\bigskip

\noindent Naijia Liu, Liang Song and Haixia Yu (Corresponding author)

\smallskip

\noindent School of Mathematics, Sun Yat-sen University, Guangzhou, 510275,  People's Republic of China

\smallskip

\noindent{\it E-mail}: \texttt{liunj@mail2.sysu.edu.cn} (N. Liu)

\noindent{\it E-mail}: \texttt{songl@mail.sysu.edu.cn} (L. Song)

\noindent{\it E-mail}: \texttt{yuhx26@mail.sysu.edu.cn} (H. Yu)

\bigskip

\end{document}